\documentclass[10pt,reqno]{amsart}

\usepackage{amssymb, a4wide, amsmath, mathtools,xy}
\xyoption{all}
\usepackage{latexsym,pdfsync,xcolor,graphicx}
\usepackage{multirow}
\usepackage[T1]{fontenc}
\usepackage[utf8]{inputenc}
\usepackage{array}
\usepackage{upgreek}
\usepackage{pstricks-add}
\usepackage{enumerate}
\usepackage{orcidlink}
\usepackage[english]{babel}	
\usepackage{comment}
\allowdisplaybreaks		
\usepackage{pgf,tikz}
\usepackage{wrapfig}
\usetikzlibrary{arrows}
\usepackage{float}
\usepackage{appendix}
\usepackage{hyperref}
\hypersetup{colorlinks,
citecolor=black,
filecolor=black,
linkcolor=black,
urlcolor=black}

\newtheorem{theorem}{Theorem}[subsection]
\newtheorem{Lemma}[theorem]{Lemma}

\newtheorem{Definition}[theorem]{Definition}
\newtheorem{Corollary}[theorem]{Corollary}
\newtheorem{Remark}[theorem]{Remark}
\newtheorem{Example}[theorem]{Example}

\makeatletter
\renewcommand{\thetheorem}{%
	\ifnum\value{subsection}=0
	\thesection.\arabic{theorem}%
	\else
	\thesubsection.\arabic{theorem}%
	\fi}
\makeatother

\newcommand{\ot}{\otimes}
\newcommand{\co}{\circ}

\title[Twist deformations for Hopf coquasigroups]{Twist deformations for Hopf coquasigroups}

\author[R. González Rodríguez]{Ramón González Rodríguez
\orcidlink{0000-0003-3061-6685}}
\author[B. Ramos Pérez]{Brais Ramos Pérez
\orcidlink{0009-0006-3912-4483}}

\address[R. González Rodríguez]{CITMAga, 15782 Santiago de Compostela, Spain}
\address[R. González Rodríguez]{Departamento de Matemática Aplicada II, Universidade de Vigo, E.E. Telecomunicación, 36310 Vigo, Spain}
\email{rgon@dma.uvigo.es}
\address[B. Ramos Pérez]{Departamento de Matemáticas, Universidade de Santiago de Compostela, Facultade de Matemáticas, 15782 Santiago de Compostela, Spain}
\email{braisramos.perez@usc.es}

\subjclass{18M05, 16T05, 17A75}

\keywords{Non-coassociative bimonoid, Hopf coquasigroup, twist deformation, composition algebra, sphere ${\sf S}^7$.}

\begin{document}
	
\begin{abstract}
 
 In this paper, we develop a general theory of twist deformations for Hopf coquasigroups in a symmetric monoidal category. To this end, we first introduce and study non-coassociative bimonoids endowed with left and right codivisions, and establish their connection with left and right Hopf coquasigroups. Next, motivated by the classical theory of Drinfeld twists for Hopf algebras, we define twists for non-coassociative bimonoids and prove that they induce deformations of Hopf coquasigroup structures through suitable modifications of the coproduct. In particular, we obtain explicit deformation procedures for right and left Hopf coquasigroups and analyze the corresponding antipodes. Finally, we apply the general theory to construct nontrivial examples arising from Hopf coquasigroups associated with the sphere  ${\sf S}^7$, obtaining new examples of twisted Hopf coquasigroups that are neither commutative nor cocommutative.
 \end{abstract}

\maketitle


\section*{Introduction}

Hopf quasigroups and Hopf coquasigroups were introduced by J. Klim and S. Majid in \cite{KM} as non-associative and non-coassociative generalizations of Hopf algebras arising naturally in connection  with Moufang loops, composition algebras, and Mal'tsev algebras. In contrast with the classical Hopf algebra setting, the product and the coproduct  are no longer associative or coassociative, and the role of the antipode is replaced by suitable quasigroup-type identities. 

One of the central mechanisms in the theory of Hopf algebras is the deformation procedure induced by Drinfeld twists (see \cite{VD}). Such twists allow one to deform the coalgebra structure while preserving the algebra structure, leading to new and highly nontrivial examples of Hopf algebras and quantum groups. Twist deformations play a fundamental role in quantum group theory, braided categories, deformation quantization, and mathematical physics. It is therefore natural to ask whether an analogous deformation theory can be developed in the non-coassocitive framework of Hopf coquasigroups.

However, extending twist deformation techniques to the non-coassociative setting presents substantial difficulties. In the classical coassociative framework, many arguments rely heavily on coassociativity and on the compatibility properties of Galois morphisms and antipodes. In the absence of coassociativity, these techniques no longer apply directly, and one must carefully reformulate the deformation procedure in terms of (co)divisions and generalized antipode identities. This requires the development of new structural tools adapted to non-coassociative bimonoids.

The main purpose of this paper is to develop a general theory of twist deformations for Hopf coquasigroups in a strict symmetric monoidal category ${\sf C}$. Our approach is based on the study of non-coassociative bimonoids endowed with left and right codivisions, which provide a natural categorical framework for understanding left and right Hopf coquasigroup structures. We establish several structural properties of these codivisions and prove characterizations of Hopf coquasigroups in terms of invertibility conditions for generalized Galois morphisms.

Motivated by the classical theory of Drinfeld twists, we introduce a notion of twist for non-coassociative bimonoids and show that such twists induce deformations of the coproduct yielding new Hopf coquasigroup structures. In particular, we obtain explicit formulas for the corresponding deformed codivisions and antipodes. These constructions provide, to our knowledge, the first systematic treatment of twist deformations in the setting of Hopf coquasigroups.

In \cite[Example 5.11]{KM}, Klim and Majid introduced the Hopf coquasigroup ${\mathbb K}[{\sf S}^7]\rtimes {\mathbb K}G$, where  $G={\mathbb Z}_{2}^{3}$, describing it as "The first example of a quantum Hopf coquasigroup, which we believe to be the first of many". Surprisingly, no further explicit families of quantum Hopf coquasigroups seem to have been constructed in the literature. The present paper addresses  precisely this gap. By applying the developed twist deformation techniques to the Hopf coquasigroup ${\mathbb K}[{\sf S}^7]\rtimes {\mathbb K}G$, we obtain a non-trivial family of quantum Hopf coquasigroups parametrized by the nonzero elements of $G$.

 The paper is organized as follows: In Section~1 we recall the basic categorical notions and establish the notation used throughout the paper. Section~2 is devoted to the study of non-coassociative bimonoids endowed with left and right codivisions. In particular, we analyze their structural properties and clarify their relationship with left and right Hopf coquasigroups. In the first subsection of Section~3 we introduce the notion of a twist for a  non-coassociative bimonoid and develop the corresponding deformation theory for Hopf coquasigroups. More precisely, we prove that suitable twists induce deformations of the coproduct leading to new right and left Hopf coquasigroup structures, and we study the corresponding antipodes. Finally, if ${\sf C }$ is  the category of vector spaces over a field ${\Bbb K}$ and  $G={\mathbb Z}_{2}^{3}$, in the second subsection of Section~3 we construct explicit examples arising from the  algebra ${\mathbb K}[{\sf S}^7]\rtimes {\mathbb K}G$ associated to the sphere ${\sf S}^7$. In this setting, we define a family of involutive twists in ${\mathbb K}[{\sf S}^7]\rtimes {\mathbb K}G$ and compute the corresponding twisted coproducts and antipodes explicitly, obtaining new examples of twisted Hopf coquasigroups that are neither commutative nor cocommutative.

\section{Preliminaries}

Throughout this paper, $\sf C$ denotes a strict  symmetric monoidal category with tensor product $\otimes$, unit object $K$, and natural isomorphism of symmetry $c$. 

Recall that  a monoidal category is a category ${\sf C}$ equipped with a tensor product functor $\otimes :{\sf C}\times {\sf C}\rightarrow {\sf C}$,  a unit object  $K$ of ${\sf C}$ and  a family of natural isomorphisms 
$$a_{M,N,P}:(M\ot N)\ot P\rightarrow M\ot (N\ot P),$$
$$r_{M}:M\ot K\rightarrow M, \;\;\; l_{M}:K\ot M\rightarrow M,$$
in ${\sf C}$ (called  associativity, right unit and left unit constraints, respectively) satisfying the pentagon and the triangle axioms, i.e.,
$$a_{M,N, P\ot Q}\co a_{M\ot N,P,Q}= (id_{M}\ot a_{N,P,Q})\co a_{M,N\ot P,Q}\co (a_{M,N,P}\ot id_{Q}),$$
$$(id_{M}\ot l_{N})\co a_{M,K,N}=r_{M}\ot id_{N},$$
where $id_{X}$ denotes the identity morphism for each object $X$ in ${\sf
C}$. A monoidal category is called strict if the associativity, right unit and left unit constraints are identities. Since every non-strict monoidal category is monoidally equivalent to  a strict one, we may assume without loss of generality that the category is strict  and, as a consequence, the results contained in this paper remain valid for every non-strict symmetric monoidal category. This includes, for instance, ${\mathbb K}$-{\sf Vect}, the category of vector spaces over a field ${\Bbb K}$, or the category of left modules over a commutative ring $R$. For notational simplicity, given objects $M$, $N$, $P$ in ${\sf C}$ and a morphism $f:M\rightarrow N$, we write $P\ot f$ for
$id_{P}\ot f$ and $f \ot P$ for $f\ot id_{P}$.

Recall also that a strict monoidal category  ${\sf C}$ is symmetric  if, for all $M$, $N$ in ${\sf C}$, there exists  a natural isomorphism $c_{M,N}:M\ot N\rightarrow N\ot M$ such that the equalities
$$
c_{M,N\ot P}= (id_{N}\ot c_{M,P})\co (c_{M,N}\ot id_{P}),\;\;
c_{M\ot N, P}= (c_{M,P}\ot id_{N})\co (id_{M}\ot c_{N,P}),\;\; 
$$
$$c_{N,M}\co c_{M,N}=id_{M\ot N},$$
hold for all $P$ in ${\sf C}$.

A magma in ${\sf C}$ is a pair $A=(A,\mu_A)$ consisting of an object $A$ of ${\sf C}$ and a morphism $\mu_A:A\otimes A\to A$, called the product. A unital magma  in ${\sf C}$ is a triple $A=(A, \eta_A, 
\mu_{A})$ where $(A,  
\mu_{A})$ is a magma in ${\sf C}$ and
$\eta_{A}: K\rightarrow A$ is a morphism, called the unit, such that 
$\mu_{A}\circ (A\otimes \eta_{A})=id_{A}=\mu_{A}\circ
(\eta_{A}\otimes A)$. A monoid in ${\sf C}$ is a unital magma $A=(A, \eta_A, \mu_{A})$ whose product is associative, that is,  $\mu_{A}\circ (A\otimes
\mu_{A})=\mu_{A}\circ (\mu_{A}\otimes A)$. Given two unital magmas (monoids) $A$ and $B$,
$f:A\rightarrow B$ is a morphism of unital magmas (monoids) if  $f\circ \eta_{A}= \eta_{B}$ and $\mu_{B}\circ (f\otimes f)=f\circ \mu_{A}$. 

Besides,
if $A$, $B$ are unital magmas (monoids) in ${\sf C}$, the object $A\otimes
B$ is a unital magma (monoid) in
${\sf C}$ where $\eta_{A\otimes B}=\eta_{A}\otimes \eta_{B}$
and $\mu_{A\otimes B}=(\mu_{A}\otimes \mu_{B})\circ (A\otimes c_{B,A}\otimes B)$. 

A comagma in ${\sf C}$ is a pair ${D} = (D, \delta_{D})$ where $D$ is an object in ${\sf C}$ and $\delta_{D}:D\rightarrow D\otimes D$  is a morphism called the coproduct. A counital comagma in ${\sf C}$ is a triple ${D} = (D,
\varepsilon_{D}, \delta_{D})$ where $(D, \delta_{D})$ is a comagma in ${\sf
	C}$ and $\varepsilon_{D}: D\rightarrow K$  is a  morphism, called the counit,  such that $(\varepsilon_{D}\otimes D)\circ
\delta_{D}= id_{D}=(D\otimes \varepsilon_{D})\circ \delta_{D}$.  A comonoid  in ${\sf C}$ is a counital comagma in ${\sf C}$ whose coproduct is coassociative, that is,   $(\delta_{D}\otimes D)\circ \delta_{D}= (D\otimes \delta_{D})\circ \delta_{D}$. If $D$ and
$E$ are counital comagmas (comonoids) in  ${\sf C}$,
$f:D\rightarrow E$ is a  morphism of counital comagmas (comonoids) if $\varepsilon_{E}\circ f
=\varepsilon_{D}$,  and $(f\otimes f)\circ
\delta_{D} =\delta_{E}\circ f$.  

Moreover, if $D$, $E$ are counital comagmas (comonoids) in ${\sf C}$,
the object $D\otimes E$ is a counital comagma (comonoid) in ${\sf C}$ where
$\varepsilon_{D\otimes E}=\varepsilon_{D}\otimes \varepsilon_{E}$
and $\delta_{D\otimes E}=(D\otimes c_{D,E}\otimes E)\circ(
\delta_{D}\otimes  \delta_{E})$.

Let $f:B\rightarrow A$ and $g:B\rightarrow A$ be morphisms between a comagma $B$ and a magma $A$. The convolution product of $f$ and $g$ is defined by $f\ast g=\mu_{A}\circ
(f\otimes g)\circ \delta_{B}$. If $A$ is unital and $B$ is  counital, we say that $f$ is convolution invertible if there exists $f^{-1}:B\to A$ such that $f \ast f^{-1}=f^{-1}\ast f=\varepsilon_B\ot \eta_A$. In the particular case $B=K$, we obtain that $f\ast g=\mu_{A}\circ (f\otimes g)$ and $f$ is convolution invertible if there exists $f^{-1}:K\to A$ such that $f \ast f^{-1}=f^{-1}\ast f= \eta_A$.

\section{Non-coassociative bimonoids}

In this section, we introduce and study non-coassociative bimonoids with a left (right) codivision. In particular, we establish several structural properties and  clarify their relationship with left and right Hopf coquasigroups.

\begin{Definition}
{\rm 
\label{def-nocobi} A non-coassociative bimonoid in the category ${\sf C}$ consists of a monoid $(H, \eta_{H}, \mu_{H})$ and a counital comagma $(H, \varepsilon_{H}, \delta_{H})$ such that $\eta_{H}$ and $\mu_{H}$ are morphisms of counital comagmas (equivalently, $\varepsilon_H$ and $\delta_H$ are morphisms of monoids). Then the following identities are satisfied:
\begin{equation}
	\label{eta-ec}
	\varepsilon_{H}\co \eta_{H}=id_{K},\;\; \delta_{H}\co \eta_{H}=\eta_{H}\ot \eta_{H},
\end{equation}
\begin{equation}
	\label{mu-ec}
	\varepsilon_{H}\co \mu_{H}=\varepsilon_{H}\ot \varepsilon_{H},\;\; \delta_{H}\co \mu_{H}=(\mu_{H}\ot \mu_{H})\co \delta_{H\ot H}.
\end{equation}

A morphism $f:H\rightarrow B$ between non-coassociative bimonoids is a morphism of monoids and counital comagmas.

Observe that replacing the counital comagma structure with a comonoid structure yields the usual notion of bimonoid.
}
\end{Definition}

\begin{Definition}
{\rm 
\label{Rcodivd}
Let $H$ be a non-coassociative bimonoid. Let $\gamma_{R}:H\otimes H\rightarrow H\otimes H$ be the morphism defined by $\gamma_{R}=(H\otimes \mu_{H})\circ (\delta_{H}\otimes H)$. We say that $H$ admits a right codivision if there exists a morphism $r_{H}:H\rightarrow  H\otimes H$ (called a right codivision of $H$) such that 
\begin{equation}
\label{Rcodiv}
\gamma_{R}\circ r_{H}=H\otimes \eta_{H}=(H\otimes \mu_{H})\circ (r_{H}\otimes H)\circ \delta_{H}.
\end{equation}
}
\end{Definition}

The notion of right codivision for a non-coassociative bimonoid admits the corresponding left version defined as follows:

\begin{Definition}
{\rm 
\label{Lcodivd}
Let $H$ be a non-coassociative bimonoid. Let $\gamma_{L}:H\otimes H\rightarrow H\otimes H$ be the morphism defined by $\gamma_{L}=(\mu_{H}\otimes H)\circ (H\otimes \delta_{H})$. We say that $H$ admits a left codivision if there exists a morphism $l_{H}:H\rightarrow H\otimes H$ (called a left codivision of $H$) such that 
\begin{equation}
	\label{Lcodiv}
	\gamma_{L}\circ l_{H}=\eta_{H}\otimes H=(\mu_{H}\otimes H)\circ (H\otimes l_{H})\circ \delta_{H}.
\end{equation}
}
\end{Definition}

In the associative framework of bimonoids, the morphisms $\gamma_{R}$ and $\gamma_{L}$ introduced above coincide with the classical Galois morphisms. It is well known that their invertibility yields the existence of an antipode and, consequently, the bimonoid becomes a Hopf monoid. In the non-coassociative setting considered here, we obtain the following result:

\begin{theorem}
Let $H$ be a non-coassociative bimonoid. The following assertions hold:
\begin{itemize}
\item[(i)]  There exists a right codivision $r_{H}$ for $H$ if and only if $\gamma_{R}$ is an isomorphism. Consequently, $r_{H}$ is uniquely determined as $r_{H}=\gamma_{R}^{-1}\circ (H\otimes \eta_{H})$.
\item[(ii)]  There exists a left  codivision $l_{H}$ for $H$ if and only if $\gamma_{L}$ is an isomorphism. Hence, $l_{H}$ is uniquely determined as $l_{H}=\gamma_{L}^{-1}\circ (\eta_{H}\otimes H)$.
\end{itemize}
\end{theorem}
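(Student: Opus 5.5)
Assume a right codivision $r_H$ exists. The plan is to write down an explicit inverse of $\gamma_R$:
$$\gamma_R' := (H\otimes \mu_H)\circ (r_H\otimes H).$$
We check $\gamma_R\circ\gamma_R'=id_{H\otimes H}$ and $\gamma_R'\circ\gamma_R=id_{H\otimes H}$. Only associativity of $\mu_H$ is used, never coassociativity of $\delta_H$. This matters because $\gamma_R$ applies $\delta_H$ only once, to the first tensor factor.

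For the first composite, naturality of the tensor product lets us slide the right-hand $\mu_H$ past $\delta_H\otimes H$:
$$\gamma_R\circ\gamma_R'=(H\otimes\mu_H)\circ(\delta_H\otimes H)\circ(H\otimes\mu_H)\circ(r_H\otimes H)=(H\otimes\mu_H)\circ(H\otimes\mu_H\otimes H)\circ(\delta_H\otimes H\otimes H)\circ(r_H\otimes H).$$
Associativity rewrites $(H\otimes\mu_H)\circ(H\otimes\mu_H\otimes H)$ as $(H\otimes\mu_H)\circ(H\otimes H\otimes\mu_H)$. The composite becomes
$$(H\otimes\mu_H)\circ\bigl((\gamma_R\circ r_H)\otimes H\bigr)=(H\otimes\mu_H)\circ(H\otimes\eta_H\otimes H)=id_{H\otimes H},$$
using the first equality in (\ref{Rcodiv}) and the unit axiom.

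For the second composite,
$$\gamma_R'\circ\gamma_R=(H\otimes\mu_H)\circ(r_H\otimes H)\circ(H\otimes\mu_H)\circ(\delta_H\otimes H).$$
Naturality and associativity give $(H\otimes\mu_H)\circ(H\otimes\mu_H\otimes H)\circ(r_H\otimes H\otimes H)\circ(\delta_H\otimes H)$. The first three factors here equal $\bigl((H\otimes\mu_H)\circ(r_H\otimes H)\circ\delta_H\bigr)\otimes H$ followed by $H\otimes\mu_H$. The second equality in (\ref{Rcodiv}) turns this into $(H\otimes\mu_H)\circ(H\otimes\eta_H\otimes H)=id$.

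Conversely, if $\gamma_R$ is an isomorphism, define $r_H:=\gamma_R^{-1}\circ(H\otimes\eta_H)$. The first equality in (\ref{Rcodiv}) is then immediate. For the second, the computation just made, read backwards, shows
$$\gamma_R\circ(H\otimes\mu_H)\circ(r_H\otimes H)=(H\otimes\mu_H)\circ\bigl((\gamma_R\circ r_H)\otimes H\bigr)=id.$$
Hence $(H\otimes\mu_H)\circ(r_H\otimes H)=\gamma_R^{-1}$, and therefore
$$(H\otimes\mu_H)\circ(r_H\otimes H)\circ\delta_H=\gamma_R^{-1}\circ\delta_H.$$
It remains to see that this equals $H\otimes\eta_H$, which is equivalent to $\delta_H=\gamma_R\circ(H\otimes\eta_H)$. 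That holds by the unit axiom, since $\gamma_R\circ(H\otimes\eta_H)=(H\otimes\mu_H)\circ(\delta_H\otimes\eta_H)=\delta_H$.

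Uniqueness follows from the first equality in (\ref{Rcodiv}): any right codivision satisfies $\gamma_R\circ r_H=H\otimes\eta_H$, so $r_H=\gamma_R^{-1}\circ(H\otimes\eta_H)$.

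Part (ii) is the mirror image. We use $\gamma_L'=(\mu_H\otimes H)\circ(H\otimes l_H)$ and reflect every computation left to right; equivalently, apply (i) to the opposite/coopposite structure via the symmetry $c$.

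The only step needing care is the bookkeeping with associativity and the interchange law. In particular, the second equality of (\ref{Rcodiv}) is exactly what gives the left inverse property, and no coassociativity is needed, so I expect no genuine obstacle.
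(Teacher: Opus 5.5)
Your proof is correct and is essentially the paper's. The forward direction uses the same explicit inverse $(H\otimes\mu_H)\circ(r_H\otimes H)$. In the converse, your observation that this morphism is a right inverse of $\gamma_R$, hence equal to $\gamma_R^{-1}$, is the paper's identity $\gamma_R^{-1}\circ(H\otimes\mu_H)=(H\otimes\mu_H)\circ(\gamma_R^{-1}\otimes H)$ in another form, and you finish with the same step $\gamma_R^{-1}\circ\delta_H=H\otimes\eta_H$.

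One cosmetic slip: in the first composite, naturality alone gives the factor $H\otimes H\otimes\mu_H$, and associativity then turns it into $H\otimes\mu_H\otimes H$ so that $\gamma_R\circ r_H$ appears; you labelled these two steps in the reverse order.
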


\begin{proof} We prove only the first statement, since the argument for (ii) is completely analogous.  
	
Assume first that $H$ admits a right codivision $r_H$. Define $\gamma_{R}^{\prime}=(H\otimes \mu_{H})\circ (r_{H}\otimes H)$. We claim that $\gamma_{R}^{\prime}$  is the inverse morphism of $\gamma_{R}$. Indeed, using the associativity of $\mu_{H}$ together with (\ref{Rcodiv}), we obtain
$$\gamma_{R}\circ \gamma_{R}^{\prime}=(H\otimes\mu_{H} )\circ ((\gamma_{R}\circ r_{H})\otimes H)=id_{H\otimes H}$$
and similarly,
$$\gamma_{R}^{\prime}\circ \gamma_{R}=(H\otimes\mu_{H} )\circ (((H\otimes \mu_{H})\circ (r_{H}\otimes H)\circ \delta_{H})\otimes H)=id_{H\otimes H}.$$
Hence $\gamma_R$ is invertible.

Conversely, assume that $\gamma_{R}$ is invertible and define $r_{H}=\gamma_{R}^{-1}\circ (H\otimes \eta_{H})$.  Clearly, $\gamma_{R}\circ r_{H}=H\otimes \eta_{H}$. Thus, it remains to verify the second identity in \eqref{Rcodiv}.

Observe first that associativity of $\mu_H$ implies that 
\begin{equation}
\label{gr1}
\gamma_{R}^{-1}\circ (H\otimes \mu_{H})=(H\otimes \mu_{H})\circ (\gamma_{R}^{-1}\otimes H).
\end{equation}
Therefore, 
\begin{align*}
(H\otimes \mu_{H})\circ (r_{H}\otimes H)\circ \delta_{H}&=(H\otimes \mu_{H})\circ ((\gamma_{R}^{-1}\circ (H\otimes \eta_{H}))\otimes H)\circ \delta_{H}\\
&=\gamma_{R}^{-1}\circ (H\otimes (\mu_{H}\circ (\eta_{H}\otimes H)))\circ \delta_{H}\\
&=\gamma_{R}^{-1}\circ (H\otimes \mu_{H})\circ (\delta_{H}\otimes \eta_{H})\\
&=\gamma_{R}^{-1}\circ\gamma_{R}\circ (H\otimes \eta_{H})\\
&=H\otimes \eta_{H}
\end{align*}
where we have used \eqref{gr1} and the unit identities. Hence, $r_H$ is a right codivision.

Finally, uniqueness follows immediately from \eqref{Rcodiv}, since any right codivision necessarily satisfies
\[
r_H=\gamma_R^{-1}\circ(H\otimes\eta_H).
\]

This completes the proof.
\end{proof}

\begin{Remark}{\rm  Let $H$ be  a non-coassociative bimonoid.
\begin{itemize}
\item[(i)]  If $\gamma_{R}$ is an isomorphism, then a straightforward computation yields
\begin{equation}
\label{gr2}
\gamma_{R}^{-1}\circ\delta_{H}=H\otimes \eta_{H},
\end{equation}
and
\begin{equation}
\label{gr3}
\mu_{H}\circ \gamma_{R}^{-1}=\varepsilon_{H}\otimes H
\end{equation}

Observe also that, by (\ref{gr3}), we obtain 
\begin{equation}
	\label{gr4}
	\mu_{H}\circ r_{H}=\varepsilon_{H}\otimes \eta_{H}.
\end{equation}

Similarly,  if $\gamma_{L}$ is an isomorphism, then the equalities 
$$
\gamma_{L}^{-1}\circ\delta_{H}= \eta_{H}\otimes H,\;\;\;
\mu_{H}\circ \gamma_{L}^{-1}=H\otimes \varepsilon_{H}, \;\;\;\mu_{H}\circ l_{H}=\varepsilon_{H}\otimes \eta_{H}$$
hold.

\item[(ii)]  Assume that  there exists a right codivision $r_{H}$ for $H$. Define $\rho_{H}=(\varepsilon_{H}\otimes H)\circ r_{H}$. Then the identity 
\begin{equation}
\label{gr5}
\rho_{H}\ast id_{H}=\varepsilon_{H}\otimes \eta_{H}
\end{equation}
holds because,  by (\ref{gr1}) and (\ref{gr2}), we obtain the following:
\begin{align*}
\rho_{H}\ast id_{H}&=(\varepsilon_{H}\otimes H)\circ (H\otimes \mu_{H})\circ (\gamma_{R}^{-1}\otimes H)\circ (H\otimes \eta_{H}\otimes H)\circ \delta_{H}\\
&=(\varepsilon_{H}\otimes H)\circ \gamma_{R}^{-1}\circ (H\otimes \mu_{H})\circ (H\otimes \eta_{H}\otimes H)\circ \delta_{H}\\
&=(\varepsilon_{H}\otimes H)\circ \gamma_{R}^{-1}\circ\delta_{H}\\
&=\varepsilon_{H}\otimes \eta_{H}.
\end{align*}

Furthermore,
\begin{equation}
\label{gr6}
(H\otimes \varepsilon_{H})\circ r_{H}=id_{H}
\end{equation}
because 
\begin{align*}
(H\otimes \varepsilon_{H})\circ r_{H}&=(H\otimes \varepsilon_{H})\circ \gamma_{R}^{-1}\circ(H\otimes \eta_{H})\\ &=(H\otimes \varepsilon_{H})\circ\gamma_{R}\circ  \gamma_{R}^{-1}\circ(H\otimes \eta_{H})\\
&=id_{H}.
\end{align*}

Therefore, the identity
\begin{equation}
\label{gr7}
\varepsilon_{H}\circ \rho_{H}=\varepsilon_{H}
\end{equation}
also holds.  Finally, we can prove that 
\begin{equation}
\label{gr8}
\rho_{H}\circ \eta_{H}=\eta_{H}
\end{equation}
because, by (\ref{eta-ec}) and (\ref{gr2}), we have 
$$\rho_{H}\circ \eta_{H}=(\varepsilon_{H}\otimes H)\circ\gamma_{R}^{-1}\circ (\eta_{H}\otimes \eta_{H})
=(\varepsilon_{H}\otimes H)\circ\gamma_{R}^{-1}\circ \delta_{H}\circ \eta_{H}=\eta_{H}.$$

Similarly, if  there exists a left codivision $l_{H}$ for $H$ and  $\lambda_{H}=(H\otimes \varepsilon_{H})\circ l_{H}$,  then the identities  
$$id_{H}\ast \lambda_{H}=\varepsilon_{H}\otimes \eta_{H}, \;\; ( \varepsilon_{H}\otimes H)\circ l_{H}=id_{H}, $$
and 
$$\varepsilon_{H}\circ \lambda_{H}=\varepsilon_{H}, \;\; \lambda_{H}\circ \eta_{H}=\eta_{H}.$$
hold.

\item[(iii)] As in the previous point, suppose that  there exists a right codivision $r_{H}$ for $H$. Then, 
\begin{equation}
\label{gr9}
r_{H}\circ \mu_{H}=(\mu_{H}\otimes (\mu_{H}\circ c_{H,H}))\circ (H\otimes c_{H,H}\otimes H)\circ (r_{H}\otimes r_{H}). 
\end{equation}

Indeed, since $\gamma_{R}$  is invertible, it suffices to show that both morphisms in (\ref{gr9}) are  the same composing with   $\gamma_{R}$. Observe first that, by (\ref{Rcodiv}),
$$\gamma_{R}\circ r_{H}\circ \mu_{H}=\mu_{H}\otimes \eta_{H}$$
holds. On the other hand, we have
\begin{itemize}
	\itemindent=-10pt
	\item[]$\hspace{0.38cm}	\gamma_{R}\circ (\mu_{H}\otimes (\mu_{H}\circ c_{H,H}))\circ (H\otimes c_{H,H}\otimes H)\circ (r_{H}\otimes r_{H})$
	\item[]$= (H\otimes \mu_{H})\circ (((\mu_{H}\ot \mu_{H})\co \delta_{H\ot H})\otimes (\mu_{H}\circ c_{H,H}))\circ (H\otimes c_{H,H}\otimes H)\circ (r_{H}\otimes r_{H})$ {\footnotesize (by  (\ref{mu-ec}))}
	\item[]$=(H\otimes \mu_{H})\circ (\mu_{H\otimes H}\otimes H)\circ (\delta_{H}\otimes (\gamma_{R}\circ r_{H})\otimes H)\circ (H\otimes c_{H,H})\circ (r_{H}\otimes H)$ {\footnotesize (by the associativity of $\mu_{H}$)}
	\item[]$=\mu_{H\otimes H}\circ  (\delta_{H}\otimes  c_{H,H})\circ (r_{H}\otimes H) $ {\footnotesize (by  (\ref{Rcodiv}) and the unit properties)}
	\item[]$=(\mu_{H}\otimes H)\circ (H\otimes c_{H,H})\circ ((\gamma_{R}\circ r_{H})\otimes   H)$ {\footnotesize (by  the naturality of $c$)}
	\item[]$=\mu_{H}\otimes \eta_{H}$ {\footnotesize (by  (\ref{Rcodiv}))}
\end{itemize}

Then, as a consequence, the desired identity  (\ref{gr9}) follows immediately.  Moreover, composing in (\ref{gr9})  with $\varepsilon_{H}\otimes H$, we obtain that 
\begin{equation}
	\label{gr10}
	\rho_{H}\circ \mu_{H}=\mu_{H}\circ c_{H,H}\circ (\rho_{H}\otimes \rho_{H}),
\end{equation}
i.e., $\rho_{H}$ is antimultiplicative. 

Similarly, if there exists a left codivision $l_{H}$ for $H$, then 
$$
l_{H}\circ \mu_{H}=((\mu_{H}\circ c_{H,H})\otimes \mu_{H})\circ (H\otimes c_{H,H}\otimes H)\circ (l_{H}\otimes l_{H})
$$
holds and $\lambda_{H}$ is antimultiplicative, i.e., 
$$
\lambda_{H}\circ \mu_{H}=\mu_{H}\circ c_{H,H}\circ (\lambda_{H}\otimes \lambda_{H}).
$$
\end{itemize}
	
}
\end{Remark}

\begin{Definition}
{\rm 
A right Hopf coquasigroup $H$ is a non-coassociative bimonoid such that there exists a morphism $\rho_{H}:H\rightarrow H$, called the right antipode of $H$, satisfying the equalities:
\begin{equation}
\label{Rhcg}
(H\otimes \mu_{H})\circ (\delta_{H}\otimes \rho_{H})\circ \delta_{H}=H\otimes \eta_{H}=(H\otimes \mu_{H})\circ (((H\otimes \rho_{H})\circ \delta_{H})\otimes H)\circ \delta_{H}.
\end{equation}
}
\end{Definition}

For the left side we have the definition of left Hopf coquasigroup.

\begin{Definition}
{\rm 
A left Hopf coquasigroup $H$ is a non-coassociative bimonoid such that there exists a morphism $\lambda_{H}:H\rightarrow H$, called the left antipode of $H$, satisfying the equalities:
\begin{equation}
\label{Lhcg}
(\mu_{H}\otimes H)\circ (\lambda_{H}\otimes \delta_{H})\circ \delta_{H}=\eta_{H}\otimes H=(\mu_{H}\otimes H)\circ (H\otimes ((\lambda_{H}\otimes H)\circ \delta_{H}))\circ \delta_{H}.
\end{equation}
}
\end{Definition}

\begin{Remark}
{\rm If $H$ is a right Hopf coquasigroup with right antipode $\rho_{H}$, then composing with $\varepsilon_{H}\otimes H$ in (\ref{Rhcg}), we obtain that 
\begin{equation}
\label{Rhcg1}
\rho_{H}\ast id_{H}=id_{H}\ast \rho_{H}=\varepsilon_{H}\otimes \eta_{H},
\end{equation}
or, in other words, $\rho_{H}$ is the convolution inverse of $id_{H}$.

Similarly, if $H$ is a left Hopf coquasigroup with left  antipode $\lambda_{H}$, then composing with $H\otimes \varepsilon_{H}$ in (\ref{Lhcg}) we have that $\lambda_{H}$ is the convolution inverse of $id_{H}$, or equivalently,
 $$\lambda_{H}\ast id_{H}=id_{H}\ast \lambda_{H}=\varepsilon_{H}\otimes \eta_{H}.$$
}
\end{Remark}

Combining the previous definitions of right and left Hopf coquasigroup  leads to the notion of Hopf coquasigroup introduced by J. Klim and S. Majid in \cite{KM}.

\begin{Definition}
{\rm 
A  Hopf coquasigroup $H$ is a non-coassociative bimonoid such that there exists a morphism $S_{H}:H\rightarrow H$, called the antipode of $H$, satisfying the equalities (\ref{Rhcg}) and (\ref{Lhcg}).

A morphism $f:H\rightarrow D$ between  Hopf coquasigroups is a Hopf coquasigroup morphism if it is a morphism of non-coassocitive bimonoids. 
}
\end{Definition}

In this monoidal setting, one can prove, as in \cite{KM}, that  $S_{H}$ is unique and satisfies (\ref{gr7}), (\ref{gr8}), (\ref{gr10}) and 
\begin{equation}
\label{antico}
(S_{H}\otimes S_{H})\circ c_{H,H}\circ \delta_{H}=\delta_{H}\circ S_{H}, 
\end{equation}
i.e., $S_{H}$ leaves the unit and the counit invariant and it is antimultiplicative and anticomultiplicative.

The following example was introduced by J. Klim and S. Majid in \cite{KM} (see also \cite{KM2} and \cite{KThesis}).

\begin{Example}
\label{ex1}
{\rm In this example we will work in a category of vector spaces over a field ${\mathbb K}$. Then we use the term algebra rather than monoid. Consider the abelian group  $G={\mathbb Z}_{2}^{3}$ of ${\mathbb Z}_{2}$-valued vectors and let ${\mathbb K}$ be a field. Denote by ${\mathbb K}G$ the group algebra of $G$ and by $\{\sigma_{u} \;|\; u\in G \}$ its basis. Let $F:G\times G\rightarrow {\mathbb K}$ be the 2-cochain defined by  the following table 
\[
\begin{array}{c|rrrrrrrr}
	F(a,b)
	&000&001&010&011&100&101&110&111\\
	\hline
	000& 1& 1& 1& 1& 1& 1& 1& 1\\
	001& 1&-1& 1&-1& 1&-1& 1&-1\\
	010& 1&-1&-1& 1& 1&-1&-1& 1\\
	011& 1& 1&-1&-1&-1&-1& 1& 1\\
	100& 1&-1&-1& 1&-1& 1& 1&-1\\
	101& 1& 1& 1& 1&-1&-1&-1&-1\\
	110& 1&-1& 1&-1&-1& 1&-1& 1\\
	111& 1& 1&-1&-1& 1& 1&-1&-1
\end{array}
\]

Then, 
$$F(a,b)=(-1)^{\gamma(a,b)}$$
where 
$$\gamma(a,b)=\sum_{1\leq i\leq j\leq 3} a_ib_j+a_1a_2b_3+a_1b_2b_3+b_1a_2b_3.$$

Let ${\mathbb K}_{F} G $ be the vector space with basis $\{e_{a} \;|\; a\in G \}$, i.e. 
${\mathbb K}_{F} G =\displaystyle\bigoplus_{a\in G}{\mathbb K}e_{a}$, and with product defined by 
$$e_{a}\cdot e_{b}=F(a,b)e_{a+b}.$$

As  proved in \cite{AM} (see also \cite[Section 2]{KM}), ${\mathbb K}_{F} G $ with the previous product  is a composition algebra with respect to the non singular quadratic form  on basis $G$ defined by the Euclidean norm,  $N(x)=\displaystyle\sum_{a\in G}x_{a}^2$, for $x=\displaystyle \sum_{a\in G}x_{a}e_{a},$
because 
$$F(a,b)^2=1 $$
holds, for all $a,b\in G$, and
$$F(a,a+c)F(b,b+c)+F(a,b+c)F(b, a+c)=0$$
also holds for all $a\neq b$ and $c$ in $G$.  The statement that  ${\mathbb K}_{F} G $ is a composition algebra means that $N(xy)=N(x)N(y)$, and this implies that the 7-sphere 
over ${\mathbb K}$ is closed under the product defined in ${\mathbb K}_{F} G $. Henceforth, we denote  this sphere as in \cite{KM} by
$${\sf S}^7=\{ x=\sum_{a\in G}x_{a}e_{a}\; |\; \displaystyle\sum_{a\in G}x_{a}^2=1\}.$$

As  proved in \cite[Proposition 3.6]{KM}, ${\sf S}^7$ is an I.P. loop or, in other words, a Hopf quasigroup (see  \cite[Definition 3.1]{KM}) in the category {\sf Set}.

Following \cite[Section 5]{KM}, let ${\mathbb K}[{\sf S}^7]$ be the commutative polynomial algebra ${\mathbb K}[f_{a}\;|\; a\in G]$ with relations $\displaystyle\sum_{a\in G}f_a^2=1.$  Then, ${\mathbb K}[{\sf S}^7]$ is the algebra of functions on the sphere ${\sf S}^7$ with pointwise product generated by the functions $f_{a}(x)=x_{a}$ where $x=\displaystyle \sum_{a\in G}x_{a}e_{a}.$ As was proved in \cite[Proposition 5.7]{KM}, ${\mathbb K}[{\sf S}^7]$ is a Hopf coquasigroup with   counit 
$$\varepsilon_{{\mathbb K}[{\sf S}^7]}(f_{c})=\delta_{c,0},$$
coproduct
$$\delta_{{\mathbb K}[{\sf S}^7]}(f_{c})=\sum_{a+b=c}F(a,b) f_{a}\otimes f_{b},$$
and antipode, 
$$S_{{\mathbb K}[{\sf S}^7]}(f_{c})=F(c,c)f_{c}.$$

The Hopf coquasigroup  ${\mathbb K}[{\sf S}^7]$ has an action of $G$ 
$$\varphi_{{\mathbb K}[{\sf S}^7]}:{\mathbb K}G\otimes {\mathbb K}[{\sf S}^7]\rightarrow {\mathbb K}[{\sf S}^7]$$
defined by 
$$\varphi_{{\mathbb K}[{\sf S}^7]}(\sigma_{u}\otimes 1_{{\mathbb K}[{\sf S}^7]})=1_{{\mathbb K}[{\sf S}^7]}, \;\;\varphi_{{\mathbb K}[{\sf S}^7]}(\sigma_{u}\otimes f_{c})=\chi(c,u)f_{c}$$
where $$\chi(c,u)=(-1)^{uc}$$ and  $uc$ denotes the ${\mathbb Z}_{2}$-scalar product. 

By \cite[Proposition 5.10, Example 5.11]{KM}, there exists a cross product $$D={\mathbb K}[{\sf S}^7]\rtimes {\mathbb K}G$$ such that it is a  non-commutative and non-cocommutative Hopf coquasigroup with the following structure: Put $$T_{1}^u=1_{{\mathbb K}[{\sf S}^7]}\otimes \sigma_{u}, \;\; T_{c}^u=f_{c}\otimes \sigma_{u}, \;\;
T_{c, d}^u=f_{c}f_d\otimes \sigma_{u}$$
for $c,d, u \in G$.  Then, the unit is $\eta_{D}:{\mathbb K}\rightarrow D$ defined by 
\begin{equation}
	\label{etaD}
	\eta_{D}(1)=1_{{\mathbb K}[{\sf S}^7]}\otimes \sigma_{0}=T_{1}^0=1_{D},
\end{equation}
the product is defined by the morphism $\mu_{D}:D\otimes D\rightarrow D$ where, if we denote $\mu_{D}(M\otimes N)$ by $M\bullet N$, 
\begin{equation}
	\label{muD1}
	T_{1}^u\bullet T_{1}^{v}=T_{1}^{u+v},
\end{equation}
\begin{equation}
	\label{muD2}
	T_{1}^u\bullet T_{c}^{v}=\varphi_{{\mathbb K}[{\sf S}^7]}(\sigma_{u}\otimes f_{c})\otimes  \sigma_{u+v}=\chi(c,u)T_{c}^{u+v},
\end{equation}
\begin{equation}
	\label{muD3}
	T_{c}^u\bullet T_{1}^{v}=f_{c}\varphi_{{\mathbb K}[{\sf S}^7]}(\sigma_{u}\otimes 1_{{\mathbb K}[{\sf S}^7]})\otimes  \sigma_{u+v}=T_{c}^{u+v},
\end{equation}
\begin{equation}
	\label{muD4}
	T_{c}^u\bullet T_{b}^v=f_{c}\varphi_{{\mathbb K}[{\sf S}^7]}(\sigma_{u}\otimes f_{b})\otimes  \sigma_{u+u'}= \chi(b,u)T_{c,b}^{u+v}.
\end{equation}

The counit is the morphism $\varepsilon_{D}: D\rightarrow {\mathbb K}$ with 
\begin{equation}
	\label{varepD}
	\varepsilon_{D}(T_{1}^u)=1, \;\; \varepsilon_{D}(T_{c}^u)=\delta_{0,c},
\end{equation}
and the coproduct is the morphism $\delta_{D}: D\rightarrow D\otimes D$ such that 
\begin{equation}
	\label{copD1}
	\delta_{D}(T_{1}^u)=T_{1}^u\otimes  T_{1}^u,
\end{equation}
\begin{equation}
	\label{copD2}
	\delta_{D}(T_{c}^u)=\sum_{a+b=c}F(a,b) T_{a}^u\otimes T_{b}^u.
\end{equation}

Finally, the antipode  is the morphism $S_{D}:D\rightarrow D$ satisfying
\begin{equation}
	\label{antD1}
	S_{D}(T_{1}^u)=T_{1}^u
\end{equation}
and 
\begin{equation}
	\label{antD2}
	S_{D}(T_{c}^u)= \varphi_{{\mathbb K}[{\sf S}^7]}(\sigma_{u}\otimes S_{{\mathbb K}[{\sf S}^7]}(f_{c}))\otimes \sigma_u=\chi(c,u)F(c,c)T_{c}^u.
\end{equation}
}
\end{Example}

We conclude this section by establishing a new characterization of right  (left) Hopf coquasigroups.

\begin{theorem}
\label{eqivRTH}
The following assertions are equivalent for a non-coassociative bimonoid $H$.
\begin{itemize}
\item[(i)]  There exists a right codivision $r_{H}$ for $H$ such that 
\begin{equation}
\label{gRf1} 
r_{H}=(H\otimes \rho_{H})\circ \delta_{H}
\end{equation}
holds for $\rho_{H}=(\varepsilon_{H}\otimes H)\circ r_{H}.$
\item[(ii)] There exists a right codivision $r_{H}$ for $H$ such that 
\begin{equation}
	\label{gRf2} 
	(H\otimes \mu_{H})\circ (\delta_{H}\otimes \rho_{H})\circ \delta_{H}=H\otimes \eta_{H}
\end{equation}
holds for $\rho_{H}=(\varepsilon_{H}\otimes H)\circ r_{H}.$
\item[(iii)]  The non-coassociative bimonoid $H$ is a right Hopf coquasigroup.
\end{itemize}
\end{theorem}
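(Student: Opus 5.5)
We will prove the cycle (i) $\Rightarrow$ (ii) $\Rightarrow$ (iii) $\Rightarrow$ (i).

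\emph{(i) $\Rightarrow$ (ii).} Substitute \eqref{gRf1} into the first equality of \eqref{Rcodiv}. This gives
$$(H\otimes\mu_{H})\circ(\delta_{H}\otimes H)\circ(H\otimes\rho_{H})\circ\delta_{H}=\gamma_{R}\circ r_{H}=H\otimes\eta_{H}.$$
The left-hand side is exactly $(H\otimes \mu_{H})\circ (\delta_{H}\otimes \rho_{H})\circ \delta_{H}$, so \eqref{gRf2} holds.

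\emph{(iii) $\Rightarrow$ (i).} Let $\rho_{H}$ be the right antipode and set $r_{H}=(H\otimes\rho_{H})\circ\delta_{H}$. The two equalities of \eqref{Rhcg} are precisely the two conditions in \eqref{Rcodiv} for this $r_{H}$: the first reads $\gamma_{R}\circ r_{H}=H\otimes\eta_{H}$, and the second reads $(H\otimes\mu_{H})\circ(r_{H}\otimes H)\circ\delta_{H}=H\otimes\eta_{H}$. Hence $r_{H}$ is a right codivision. It remains to check that $(\varepsilon_{H}\otimes H)\circ r_{H}=\rho_{H}$, so that \eqref{gRf1} holds with the prescribed $\rho_{H}$. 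This follows from the counit property: $(\varepsilon_{H}\otimes H)\circ(H\otimes\rho_{H})\circ\delta_{H}=\rho_{H}$.

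\emph{(ii) $\Rightarrow$ (iii).} This is the main step. Take $\rho_{H}=(\varepsilon_{H}\otimes H)\circ r_{H}$. The first equality of \eqref{Rhcg} is \eqref{gRf2}, so only the second needs proof:
$$(H\otimes\mu_{H})\circ(((H\otimes\rho_{H})\circ\delta_{H})\otimes H)\circ\delta_{H}=H\otimes\eta_{H}.$$
The cleanest route is to show that $r_{H}=(H\otimes\rho_{H})\circ\delta_{H}$. Once this is established, the second identity of \eqref{Rhcg} is just the second identity of \eqref{Rcodiv}.

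To prove $r_{H}=(H\otimes\rho_{H})\circ\delta_{H}$, we use that $\gamma_{R}$ is an isomorphism, by the theorem stated before Remark~(i). It therefore suffices to compare both sides after composing with $\gamma_{R}$. On one side, $\gamma_{R}\circ r_{H}=H\otimes\eta_{H}$ by \eqref{Rcodiv}. On the other side,
$$\gamma_{R}\circ(H\otimes\rho_{H})\circ\delta_{H}=(H\otimes\mu_{H})\circ(\delta_{H}\otimes\rho_{H})\circ\delta_{H},$$
which equals $H\otimes\eta_{H}$ by \eqref{gRf2}. Cancelling $\gamma_{R}$ gives the desired equality, so this step reduces to invertibility of $\gamma_{R}$.

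I expect the only genuine obstacle to be the bookkeeping in this last step. The point to verify carefully is that $\gamma_{R}\circ(H\otimes\rho_{H})=(H\otimes\mu_{H})\circ(\delta_{H}\otimes\rho_{H})$, which holds because $\rho_{H}$ acts on the second tensor factor while $\gamma_{R}$ applies $\delta_{H}$ to the first. Beyond that, one must make sure that the $\rho_{H}$ defined from $r_{H}$ in (ii) is the antipode exhibited in (iii), and that the $\rho_{H}$ of (iii) is recovered in (i) via the counit.
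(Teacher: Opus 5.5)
Your proof is correct and is essentially the paper's argument. In both, the key step of deducing $r_H=(H\otimes\rho_H)\circ\delta_H$ from \eqref{gRf2} is a cancellation of $\gamma_R$. You invoke the earlier invertibility theorem for this, while the paper does the cancellation by hand using associativity and the second identity of \eqref{Rcodiv}, which amounts to applying $\gamma_R^{-1}=(H\otimes\mu_H)\circ(r_H\otimes H)$. You also make explicit the counit check $(\varepsilon_H\otimes H)\circ r_H=\rho_H$ in (iii) $\Rightarrow$ (i), which the paper leaves implicit.
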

\begin{proof}
We prove the equivalence by showing (i) $\Rightarrow$ (ii) $\Rightarrow$ (i) $\Rightarrow$ (iii) $\Rightarrow$ (i).

Assume first that (i) holds. Since
	\[
	r_H=(H\otimes\rho_H)\circ\delta_H,
	\]
identity \eqref{gRf2} follows immediately from \eqref{Rcodiv}, because
	\[
	(H\otimes\mu_H)\circ(\delta_H\otimes\rho_H)\circ\delta_H
	=
	\gamma_R\circ r_H
	=
	H\otimes\eta_H.
	\]
Thus (ii) holds.
	
Conversely, assume (ii). Let us show that
	\[
	r_H=(H\otimes\rho_H)\circ\delta_H.
	\]
Indeed, using the unit identities, associativity of $\mu_H$, and \eqref{Rcodiv}, we compute
	\begin{align*}
		r_H
		&=
		(H\otimes\mu_H)\circ(r_H\otimes\eta_H)
		\\
		&=
		(H\otimes\mu_H)
		\circ
		(r_H\otimes\mu_H)
		\circ
		(\delta_H\otimes\rho_H)
		\circ
		\delta_H
		\\
		&=
		(H\otimes\mu_H)
		\circ
		(((H\otimes\mu_H)\circ(r_H\otimes H)\circ\delta_H)
		\otimes\rho_H)
		\circ
		\delta_H
		\\
		&=
		(H\otimes\rho_H)\circ\delta_H,
	\end{align*}
which proves (i).
	
Next assume (i). By the equivalence already established between (i) and (ii), the first identity in \eqref{Rhcg} holds automatically. Moreover, using \eqref{gRf1} together with \eqref{Rcodiv}, we obtain
	\[
	(H\otimes\mu_H)
	\circ
	(((H\otimes\rho_H)\circ\delta_H)\otimes H)
	\circ
	\delta_H
	=
	(H\otimes\mu_H)
	\circ
	(r_H\otimes H)
	\circ
	\delta_H
	=
	H\otimes\eta_H.
	\]
Hence the second identity in \eqref{Rhcg} follows as well, and therefore $H$ is a right Hopf coquasigroup. Thus (iii) holds.
	
Finally, assume  (iii), that is, $H$ is a right Hopf coquasigroup with right antipode $\rho_H$, and define
	\[
	r_H=(H\otimes\rho_H)\circ\delta_H.
	\]
Then, using \eqref{Rhcg}, we obtain that 
	\[
	\gamma_R\circ r_H
	=
	(H\otimes\mu_H)
	\circ
	(\delta_H\otimes\rho_H)
	\circ
	\delta_H
	=
	H\otimes\eta_H
	\]
and
	\[
	(H\otimes\mu_H)
	\circ
	(r_H\otimes H)
	\circ
	\delta_H
	=
	(H\otimes\mu_H)
	\circ
	(((H\otimes\rho_H)\circ\delta_H)\otimes H)
	\circ
	\delta_H
	=
	H\otimes\eta_H.
	\]
Therefore $r_H$ satisfies the defining identities of a right codivision, proving (i).
	
This completes the proof.
\end{proof}

For the left case we have a similar result that asserts the following.

\begin{theorem}
The following assertions are equivalent for a non-coassociative bimonoid $H$.
\begin{itemize}
\item[(i)]  There exists a left codivision $l_{H}$ for $H$ such that 
$$
l_{H}=(\lambda_{H}\otimes H)\circ \delta_{H}
$$
holds for $\lambda_{H}=(H\otimes \varepsilon_{H})\circ l_{H}.$
\item[(ii)] There exists a left  codivision $l_{H}$ for $H$ such that 
$$
(\mu_{H}\otimes H)\circ (\lambda_{H}\otimes \delta_{H})\circ \delta_{H}=\eta_{H}\otimes H
$$
holds for $\lambda_{H}=(H\otimes \varepsilon_{H})\circ l_{H}.$
\item[(iii)]  The non-coassociative bimonoid $H$ is a left Hopf coquasigroup.
	\end{itemize}
\end{theorem}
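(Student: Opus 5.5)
The plan is to mirror the proof of Theorem \ref{eqivRTH}, interchanging the roles of left and right throughout: replace $\gamma_{R}$, $r_{H}$, $\rho_{H}$, $(H\otimes \mu_{H})$, $(H\otimes \eta_{H})$ and $(\varepsilon_{H}\otimes H)$ by $\gamma_{L}$, $l_{H}$, $\lambda_{H}$, $(\mu_{H}\otimes H)$, $(\eta_{H}\otimes H)$ and $(H\otimes \varepsilon_{H})$. As before, we establish the cycle (i) $\Rightarrow$ (ii) $\Rightarrow$ (i) $\Rightarrow$ (iii) $\Rightarrow$ (i).

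For (i) $\Rightarrow$ (ii), substitute $l_{H}=(\lambda_{H}\otimes H)\circ\delta_{H}$ into the first identity of (\ref{Lcodiv}). Then $\gamma_{L}\circ l_{H}=(\mu_{H}\otimes H)\circ(\lambda_{H}\otimes\delta_{H})\circ\delta_{H}$, and this equals $\eta_{H}\otimes H$.

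For (ii) $\Rightarrow$ (i), we need the left analogue of the computation in Theorem \ref{eqivRTH}. Start from the unit identity
$$l_{H}=(\mu_{H}\otimes H)\circ(\eta_{H}\otimes l_{H}),$$
and replace $\eta_{H}\otimes H$ by $(\mu_{H}\otimes H)\circ(\lambda_{H}\otimes\delta_{H})\circ\delta_{H}$, which is allowed by hypothesis (ii). This gives
$$l_{H}=(\mu_{H}\otimes H)\circ(\mu_{H}\otimes l_{H})\circ(\lambda_{H}\otimes\delta_{H})\circ\delta_{H}.$$
By associativity of $\mu_{H}$, this equals
$$(\mu_{H}\otimes H)\circ\bigl(\lambda_{H}\otimes((\mu_{H}\otimes H)\circ(H\otimes l_{H})\circ\delta_{H})\bigr)\circ\delta_{H}.$$
The second identity of (\ref{Lcodiv}) turns the inner factor into $\eta_{H}\otimes H$, and the unit property then yields $(\lambda_{H}\otimes H)\circ\delta_{H}$. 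This rearrangement via associativity is the only step needing care; it is the mirror image of the right-hand case, so no real obstacle is expected.

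For (i) $\Rightarrow$ (iii), take $\lambda_{H}=(H\otimes\varepsilon_{H})\circ l_{H}$ as the left antipode. The first identity of (\ref{Lhcg}) is (ii), which is already known to follow from (i). The second identity follows by substituting (i) into the second identity of (\ref{Lcodiv}).

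For (iii) $\Rightarrow$ (i), define $l_{H}=(\lambda_{H}\otimes H)\circ\delta_{H}$. The two identities in (\ref{Lhcg}) are exactly the two identities in (\ref{Lcodiv}), so $l_{H}$ is a left codivision. It remains to check that $(H\otimes\varepsilon_{H})\circ l_{H}=\lambda_{H}$, which follows from the counit property of $\delta_{H}$. Hence (i) holds with the prescribed $\lambda_{H}$. (This last check is implicit in the proof of Theorem \ref{eqivRTH}; here it is made explicit.)
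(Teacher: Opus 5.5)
Your proposal is correct and is exactly the left-right mirror of the paper's proof of Theorem \ref{eqivRTH}, which is how the paper intends this left version to be proved (it states it without proof), with the same cycle (i) $\Rightarrow$ (ii) $\Rightarrow$ (i) $\Rightarrow$ (iii) $\Rightarrow$ (i). Your explicit check in (iii) $\Rightarrow$ (i) that $(H\otimes\varepsilon_{H})\circ l_{H}=\lambda_{H}$, via the counit property, fills a small step the paper leaves implicit in the right-hand case.
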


\section{{\sc Twist deformations}}

In this section we will present the general theory of twist deformations for Hopf coquasigroups and we apply it to the Hopf coquasigroup $D={\mathbb K}[{\sf S}^7]\rtimes {\mathbb K}G$ introduced in Example \ref{ex1}.

\subsection{{\sc General theory}} In the theory of Hopf algebras, new Hopf algebras can be obtained by twisting either the algebra or the coalgebra structure. As  proved in \cite{NMR}, in the non associative setting it is possible to modify the product of a Hopf quasigroup using a 2-cocycle  to obtain a new Hopf quasigroup. Since the notion of a Hopf coquasigroup is dual to that of a Hopf quasigroup, in this subsection we develop a general theory of twist deformations for Hopf coquasigroups by deforming their coproducts via twists of non-coassociative bimonoids. These twists correspond to the dual analogues of the 2-cocycles introduced in  \cite{NMR}.

\begin{Definition}
{\rm 
\label{twist} Let $H$ be a non-coassociative bimonoid. Let $J:K\rightarrow H\otimes H$ be a convolution invertible 
morphism. We will say that $J$ is a twist if 
\begin{equation}
\label{twist-def}
\partial_{3}(J)\ast \partial_{1}(J)=\partial_{2}(J)\ast \partial_{4}(J),
\end{equation}
where $\partial_{i}(J):K\rightarrow  H\otimes H\otimes H$, $i\in\{1,2,3,4\}$, are morphisms defined by 
$$ \partial_{1}(J)=J\otimes \eta_{H}, \;\;  \partial_{2}(J)=(H\otimes \delta_{H})\circ J,  \;\;  \partial_{3}(J)=(\delta_{H}\otimes H)\circ J, \;\; \partial_{4}(J)=\eta_{H}\otimes J.$$
}
\end{Definition}

A straightforward computation yields that, for all $i\in\{1,2,3,4\}$, $\partial_{i}(J)$ is convolution invertible and $\partial_{i}(J)^{-1}=\partial_{i}(J^{-1})$. Then, we can obtain the following equivalent conditions to \eqref{twist-def}:  
\begin{equation}
	\label{twist-def1}
	\partial_{1}(J)\ast \partial_{4}(J^{-1})=\partial_{3}(J^{-1})\ast \partial_{2}(J),
\end{equation}
\begin{equation}
	\label{twist-def2}
	\partial_{4}(J^{-1})\ast \partial_{2}(J^{-1})=\partial_{1}(J^{-1})\ast \partial_{3}(J^{-1}), 
\end{equation}
\begin{equation}
\label{twist-def3}
\partial_{4}(J)\ast \partial_{1}(J^{-1})=\partial_{2}(J^{-1})\ast \partial_{3}(J)  
\end{equation}

In the previous identities, if we compute the convolution product,  \eqref{twist-def} is equivalent to
\begin{equation}
	\label{twist-def1n}
((\mu_{H\otimes H}\circ (\delta_{H}\otimes J))\otimes H)\circ J=(H\otimes ((\mu_{H\otimes H}\circ (\delta_{H}\otimes J))))\circ J, 
\end{equation}
 \eqref{twist-def1} is equivalent to
\begin{equation}
	\label{twist-def4n}
	(H\otimes \mu_{H}\otimes H)\circ (J\otimes J^{-1})=\mu_{H\otimes H\otimes H}\circ (((\delta_{H}\otimes H)\circ J^{-1})\otimes (( H\otimes \delta_{H})\circ J)),
\end{equation}
 \eqref{twist-def2} is equivalent to
 \begin{equation}
 	\label{twist-def2n}
 (H\otimes (\mu_{H\otimes H}\circ (J^{-1}\otimes \delta_{H})) )\circ J^{-1}=((\mu_{H\otimes H}\circ (J^{-1}\otimes \delta_{H}))\otimes H)\circ J^{-1}, 
 \end{equation}
 and, finally, \eqref{twist-def3} is equivalent to
\begin{equation}
\label{twist-def3n}
(H\otimes (\mu_{H}\circ c_{H,H})\otimes H)\circ (J^{-1}\otimes J)=\mu_{H\otimes H\otimes H}\circ (((H\otimes \delta_{H})\circ J^{-1})\otimes (( \delta_{H}\otimes H)\circ J)).
\end{equation}

\begin{Remark}
{\rm If ${\sf C}$ is the category of vector spaces over a field ${\mathbb K}$ the notion of twist used in this paper is the one of right twist that we can find in \cite{RAD} for bialgebras, i.e., the original notion introduced by V. Drinfeld in \cite{VD}. For this reason, this type of twist is also known in the literature as a Drinfeld twist.}
\end{Remark}

\begin{Lemma}
Let $H$ be a non-coassociative bimonoid. Let $J:K\rightarrow H\otimes H$ be a convolution invertible morphism. Then, the identity 
\begin{equation}
\label{extra1}
\mu_{H\otimes H}\circ (\mu_{H\otimes H}\otimes H\otimes H)\circ ((H\otimes\mu_{H}\otimes  J\otimes J^{-1}))=H\otimes \mu_{H}, 
\end{equation}
holds.
\end{Lemma}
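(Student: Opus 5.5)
The plan is to read both sides of \eqref{extra1} inside the monoid $H\otimes H$. Since $H$ is a monoid, $H\otimes H$ is a monoid with unit $\eta_{H\otimes H}=\eta_H\otimes\eta_H$ and product $\mu_{H\otimes H}=(\mu_H\otimes\mu_H)\circ(H\otimes c_{H,H}\otimes H)$; this was recalled in the preliminaries. The morphism $H\otimes\mu_H:H\otimes H\otimes H\to H\otimes H$ produces an element of this monoid. The left-hand side of \eqref{extra1} first multiplies this element on the right by $J$, using $\mu_{H\otimes H}\otimes H\otimes H$. It then multiplies the result on the right by $J^{-1}$, using the outer $\mu_{H\otimes H}$.

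The key steps, in order, are as follows.

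First, write the left-hand side as
\[
\mu_{H\otimes H}\circ(\mu_{H\otimes H}\otimes H\otimes H)\circ\bigl((H\otimes\mu_H)\otimes J\otimes J^{-1}\bigr).
\]
By the associativity of $\mu_{H\otimes H}$, namely $\mu_{H\otimes H}\circ(\mu_{H\otimes H}\otimes H\otimes H)=\mu_{H\otimes H}\circ(H\otimes H\otimes\mu_{H\otimes H})$, this becomes
\[
\mu_{H\otimes H}\circ\bigl((H\otimes\mu_H)\otimes(\mu_{H\otimes H}\circ(J\otimes J^{-1}))\bigr).
\]
Here we use that $J\otimes J^{-1}$ has domain $K\otimes K=K$, so it factors through the last two tensor slots.

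Second, observe that $\mu_{H\otimes H}\circ(J\otimes J^{-1})$ is exactly the convolution product $J\ast J^{-1}$ in the special case $B=K$ described in the preliminaries. By convolution invertibility of $J$, this equals $\eta_{H\otimes H}=\eta_H\otimes\eta_H$.

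Third, apply the unit property of the monoid $H\otimes H$, namely $\mu_{H\otimes H}\circ(H\otimes H\otimes\eta_{H\otimes H})=id_{H\otimes H}$. This yields $H\otimes\mu_H$, as required.

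There is essentially no obstacle here. The only point requiring care is the bookkeeping of which tensor factors each $\mu_{H\otimes H}$ acts on, and in particular checking that the symmetries $c_{H,H}$ hidden inside $\mu_{H\otimes H}$ match when associativity is applied. This is guaranteed by the fact that $H\otimes H$ is a genuine monoid, since $\mu_H$ is associative. Note that no twist condition on $J$ is needed, only its convolution invertibility.
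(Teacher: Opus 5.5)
Your proposal is correct and follows the paper's argument exactly. The paper also rewrites the left-hand side, by associativity of $\mu_{H\otimes H}$, as $\mu_{H\otimes H}\circ(H\otimes\mu_H\otimes(J\ast J^{-1}))$, and then concludes with $J\ast J^{-1}=\eta_H\otimes\eta_H$ and the unit property of $H\otimes H$.
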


\begin{proof}
Note that by the associativity of $\mu_{H\otimes H}$, the convolution invertible condition for $J$, and the unit properties, we have that 
$$\mu_{H\otimes H}\circ (\mu_{H\otimes H}\otimes H\otimes H)\circ ((H\otimes \mu_{H}\otimes J\otimes J^{-1}))=\mu_{H\otimes H}\circ (H\otimes \mu_{H}\otimes (J\ast J^{-1}))=H\otimes \mu_{H}$$
and we obtain \eqref{extra1}.

\end{proof}

\begin{Definition}
{\rm
\label{conormal} Let $H$ be a non-coassociative bimonoid. Let $J:K\rightarrow H\otimes H$ be a convolution invertible morphism. We will say that $J$ is conormal if 
\begin{equation}
\label{conor-defn}
(\varepsilon_{H}\otimes H)\circ J=\eta_{H}=(H\otimes \varepsilon_{H})\circ J
\end{equation}
holds.
}
\end{Definition}

A routine computation shows that, if $J$ is  conormal, then $J^{-1}$ also is. Furthermore, if $J$ is a twist, then $\overline{J}=J\otimes ((\varepsilon_{H}\otimes \varepsilon_{H})\circ J^{-1})$ is a conormal twist with convolution inverse $\overline{J}^{-1}=J^{-1}\otimes ((\varepsilon_{H}\otimes \varepsilon_{H})\circ J)$. Consequently, in what follows we may assume, without loss of generality, that $J$ is a conormal twist.

\begin{theorem}
\label{firstmain}
Let $H$ be a non-coassociative bimonoid. Let $J$ be a twist. Define the coproduct 
$$\delta_{H_{J}}=\mu_{H\otimes H}\circ (\mu_{H\otimes H}\otimes H\otimes H)\circ (J^{-1}\otimes \delta_{H}\otimes J).$$

Then,
$$H_{J}=(H, \eta_{H_{J}}=\eta_{H}, \mu_{H_{J}}=\mu_{H}, \varepsilon_{H_{J}}=\varepsilon_{H}, \delta_{H_{J}})$$
 a non-coassociative bimonoid called the twist deformation of $H$ by $J$.
\end{theorem}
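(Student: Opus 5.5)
The plan is to verify directly the axioms of Definition \ref{def-nocobi} for $H_{J}$. Since the monoid structure $(H,\eta_{H},\mu_{H})$ and the counit $\varepsilon_{H}$ are unchanged, and $\varepsilon_{H}$ is already a morphism of monoids, only three things need checking:
\begin{itemize}
\item[(a)] $\delta_{H_{J}}$ is counital with respect to $\varepsilon_{H}$;
\item[(b)] $\delta_{H_{J}}\circ\eta_{H}=\eta_{H}\otimes\eta_{H}$;
\item[(c)] $\delta_{H_{J}}\circ\mu_{H}=\mu_{H\otimes H}\circ(\delta_{H_{J}}\otimes\delta_{H_{J}})$.
\end{itemize}
No coassociativity is required, so the twist condition \eqref{twist-def} should only be needed in (a), to control the counit. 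Throughout, $\mu_{H\otimes H}$ is associative because $H\otimes H$ is a monoid.

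Step (b) is immediate. By \eqref{eta-ec}, $\delta_{H}\circ\eta_{H}=\eta_{H\otimes H}$ is the unit of $H\otimes H$, so $\delta_{H_{J}}\circ\eta_{H}=J^{-1}\ast J=\eta_{H}\otimes\eta_{H}$.

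For step (c), I write $\delta_{H}\circ\mu_{H}=\mu_{H\otimes H}\circ(\delta_{H}\otimes\delta_{H})$, which is \eqref{mu-ec}. Between the two factors I insert $J\ast J^{-1}=\eta_{H\otimes H}$, exactly as in the proof of the preceding Lemma (identity \eqref{extra1}). Regrouping then gives the product in $H\otimes H$ of
\[
\mu_{H\otimes H}\circ(\mu_{H\otimes H}\otimes H\otimes H)\circ(J^{-1}\otimes\delta_{H}\otimes J)
\]
applied to each factor, that is, $\mu_{H\otimes H}\circ(\delta_{H_{J}}\otimes\delta_{H_{J}})$. The only care needed is the bookkeeping of the symmetry $c$ inside $\mu_{H\otimes H}$, and associativity handles it.

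Step (a) is the main obstacle, since conormality is not part of the hypothesis. Composing with $\varepsilon_{H}\otimes H$ and using that $\varepsilon_{H}$ is multiplicative together with the counit property of $\delta_{H}$, I get
\[
(\varepsilon_{H}\otimes H)\circ\delta_{H_{J}}=\mu_{H}\circ(\mu_{H}\otimes H)\circ(a\otimes H\otimes b),
\]
where $a=(\varepsilon_{H}\otimes H)\circ J^{-1}$ and $b=(\varepsilon_{H}\otimes H)\circ J$. Applying $\varepsilon_{H}\otimes H$ to $J\ast J^{-1}=\eta_{H\otimes H}$ shows that $a$ and $b$ are mutually inverse in the monoid ${\rm Hom}(K,H)$. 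Now apply $\varepsilon_{H}\otimes\varepsilon_{H}\otimes H$ to the twist identity \eqref{twist-def1n}. With the scalar $\kappa=(\varepsilon_{H}\otimes\varepsilon_{H})\circ J$, the left side becomes $\kappa\, b$ and the right side becomes $b\ast b$. Cancelling $b$ gives $b=\kappa\,\eta_{H}$, with $\kappa$ invertible in ${\rm End}(K)$ and $a=\kappa^{-1}\eta_{H}$. Since ${\rm End}(K)$ is commutative and scalars pass through the morphisms, the two factors cancel, and $(\varepsilon_{H}\otimes H)\circ\delta_{H_{J}}=id_{H}$. The symmetric computation, applying $H\otimes\varepsilon_{H}\otimes\varepsilon_{H}$ to \eqref{twist-def1n}, gives $(H\otimes\varepsilon_{H})\circ\delta_{H_{J}}=id_{H}$.

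Alternatively, for (a) one could note that replacing $J$ by the conormal twist $\overline{J}$ leaves $\delta_{H_{J}}$ unchanged, since the scalar factors cancel. Conormality \eqref{conor-defn} of $\overline{J}$ and of $\overline{J}^{-1}$ then gives counitality at once.
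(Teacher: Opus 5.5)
Your proof is correct. Steps (b) and (c) are exactly the paper's argument: $\delta_{H_J}\circ\eta_H=J^{-1}\ast J$, and inserting $J\ast J^{-1}=\eta_H\otimes\eta_H$ between $\delta_H\otimes\delta_H$ and then regrouping by associativity of $\mu_{H\otimes H}$.

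The difference is in the counit step (a). The paper simply uses conormality of $J$ (and of $J^{-1}$). It relies on the remark preceding the theorem that any twist may be replaced by the conormal twist $\overline{J}$ without loss of generality, after which the counit identities follow immediately from \eqref{conor-defn}. In fact the twist equation \eqref{twist-def} plays no role in the paper's proof beyond this normalization.

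You instead apply $\varepsilon_H\otimes\varepsilon_H\otimes H$ and $H\otimes\varepsilon_H\otimes\varepsilon_H$ to \eqref{twist-def1n}. This shows that $(\varepsilon_H\otimes H)\circ J=(H\otimes\varepsilon_H)\circ J=\kappa\,\eta_H$, with $\kappa=(\varepsilon_H\otimes\varepsilon_H)\circ J$ invertible in ${\rm End}(K)$. You then observe that the scalars cancel in $\mu_H\circ(\mu_H\otimes H)\circ(a\otimes H\otimes b)$.

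Your route is slightly longer, but it gains two things. First, it proves the theorem for an arbitrary twist, exactly as stated. Second, it supplies the ``routine computation'' that the paper omits when it asserts that $\overline{J}$ is conormal. As your alternative notes, it also shows that $\delta_{H_{\overline{J}}}=\delta_{H_J}$, so the normalization is harmless.

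The paper's route is shorter. It also makes visible that, for this particular theorem, convolution invertibility plus conormality of $J$ is all that is needed.
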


\begin{proof}
Since $H_{J}$ has the same monoid structure as $H$ and the two first identities of \eqref{eta-ec} and \eqref{mu-ec} hold, to establish that $H_{J}$ is a non-coassociative bimonoid, it is suffices to show that 
$(H,\varepsilon_{H}, \delta_{H_{J}})$ is a counital comagma and the two last equalities of \eqref{eta-ec} and \eqref{mu-ec} hold.  Indeed, since $\varepsilon_H$ is a morphism of unital magmas and $J$ is conormal, using the counit properties, we have that 
$$(\varepsilon_{H}\otimes H)\circ \delta_{H_{J}}=\mu_{H}\circ ((\mu_{H}\circ (((\varepsilon_{H}\otimes H)\circ J^{-1})\otimes H)\otimes ((\varepsilon_{H}\otimes H)\circ J)))=id_{H},$$
and 
$$(H\otimes \varepsilon_{H})\circ \delta_{H_{J}}=\mu_{H}\circ ((\mu_{H}\circ (((H\otimes \varepsilon_{H})\circ J^{-1})\otimes H)\otimes ((H\otimes \varepsilon_{H})\circ J)))=id_{H}.$$
Thus, $(H,\varepsilon_{H}, \delta_{H_{J}})$ is a counital comagma. 

On the other hand, using that $\delta_H$ is a morphism of unital magmas and the unit properties, we obtain that 
$$\delta_{H_{J}}\circ \eta_{H}=J^{-1}\ast J=\eta_{H}\otimes \eta_{H}.$$

Finally, by the associativity of $\mu_{H}$, and the condition of morphism of unital magmas for $\delta_{H}$,  it follows that 
\begin{align*}
&\;\;\;\;\mu_{H\otimes H}\circ (\delta_{H_{J}}\otimes \delta_{H_{J}})\\
	&=
\mu_{H\otimes H}\circ ((\mu_{H\otimes H}\circ (J^{-1}\otimes \delta_{H}))\otimes (\mu_{H\otimes H}\circ ((J\ast J^{-1})\otimes (\mu_{H\otimes H}\circ (\delta_{H}\otimes J)))))
	\\
	&=
\mu_{H\otimes H}\circ (\mu_{H\otimes H}\otimes H\otimes H)\circ (J^{-1}\otimes (\mu_{H\otimes H}\circ (\delta_{H}\otimes \delta_{H}))\otimes J)
	\\
	&=
\delta_{H_{J}}\circ \mu_{H}.
\end{align*}

Therefore, $H_{J}$ is a non-coassociative bimonoid. 
\end{proof}

\begin{Remark}
{\rm 
Using the associativity of $\mu_{H\otimes H}$, the condition of convolution invertible for $J$, the naturality of $c$,  and the unit properties, for the coproduct  $\delta_{H_{J}}$ we obtain the following identities:
\begin{equation}
\label{ofin1}
\mu_{H\otimes H}\circ (J\otimes \delta_{H_{J}})=\mu_{H\otimes H}\circ (\delta_{H}\otimes J)
\end{equation}
and 
\begin{equation}
\label{ofin2}
\mu_{H\otimes H}\circ ( \delta_{H_{J}}\otimes J^{-1})=\mu_{H\otimes H}\circ (J^{-1}\otimes \delta_{H}).
\end{equation}
}
\end{Remark}

\begin{Remark}
{\rm 
Note that if $H$ is commutative, then for all twist $J$, $H=H_{J}$.}
\end{Remark}

\begin{Lemma}
\label{VR}
Let $H$ be a non-coassociative bimonoid with right codivision $r_{H}$, and let $J$ be a twist. Put $\rho_{H}=(\varepsilon_{H}\otimes H)\circ r_{H}$. Define the morphism
$$v_{R}=\mu_{H}\circ (\rho_{H}\otimes H)\circ J:K\rightarrow H.$$

If $id_{H}\ast \rho_{H}=\varepsilon_{H}\otimes \eta_{H}$ holds, then 
$v_{R}$ is convolution invertible with inverse $$v_{R}^{-1}=\mu_{H}\circ (H\otimes \rho_{H})\circ J^{-1}$$ and satisfies that 
\begin{equation}
\label{VR1}
\varepsilon_{H}\circ v_{R}=\varepsilon_{H}\circ v_{R}^{-1}=id_{K}, 
\end{equation}
\begin{equation}
	\label{VR1-new}
\mu_{H}\circ (H\otimes \rho_{H})\circ \mu_{H\otimes H}\circ (J^{-1}\otimes \delta_{H})=v_{R}^{-1}\otimes \varepsilon_{H}.
\end{equation}
\end{Lemma}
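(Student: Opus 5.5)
The plan is to work throughout with the identities already available for $\rho_{H}=(\varepsilon_{H}\otimes H)\circ r_{H}$. These are the antimultiplicativity \eqref{gr10}, the identity $\rho_{H}\ast id_{H}=\varepsilon_{H}\otimes\eta_{H}$ from \eqref{gr5}, the hypothesis $id_{H}\ast\rho_{H}=\varepsilon_{H}\otimes\eta_{H}$, and $\varepsilon_{H}\circ\rho_{H}=\varepsilon_{H}$ from \eqref{gr7}. As the paper allows, I assume $J$ (hence $J^{-1}$) is conormal. I will reason in Sweedler-type shorthand, $J=J^{1}\otimes J^{2}$ and $J^{-1}=y^{1}\otimes y^{2}$, and translate each step back into compositions at the end. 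In this notation
\[
v_{R}=\rho_{H}(J^{1})J^{2},\qquad v_{R}^{-1}=y^{1}\rho_{H}(y^{2}).
\]

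\textbf{Step 1: \eqref{VR1}.} Apply $\varepsilon_{H}$, which is multiplicative by \eqref{mu-ec}. Then \eqref{gr7} gives $\varepsilon_{H}\circ v_{R}=(\varepsilon_{H}\otimes\varepsilon_{H})\circ J$, and similarly for $v_{R}^{-1}$. Both equal $id_{K}$ by conormality together with \eqref{eta-ec}.

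\textbf{Step 2: \eqref{VR1-new}.} The left-hand side is $y^{1}h_{(1)}\rho_{H}(y^{2}h_{(2)})$. By antimultiplicativity \eqref{gr10} this becomes $y^{1}h_{(1)}\rho_{H}(h_{(2)})\rho_{H}(y^{2})$. The hypothesis $id_{H}\ast\rho_{H}=\varepsilon_{H}\otimes\eta_{H}$ collapses the middle factor, leaving $\varepsilon_{H}(h)\,y^{1}\rho_{H}(y^{2})$, which is $v_{R}^{-1}\otimes\varepsilon_{H}$.

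\textbf{Step 3: $v_{R}\ast v_{R}^{-1}=\eta_{H}$.} This is where the twist condition enters, and I expect it to be the main obstacle: a direct manipulation using only $J\ast J^{-1}=\eta_{H}\otimes\eta_{H}$ leaves the factors in the wrong order. The plan is to take the equivalent form \eqref{twist-def4n},
\[
J^{1}\otimes J^{2}y^{1}\otimes y^{2}=y^{1}_{(1)}J^{1}\otimes y^{1}_{(2)}J^{2}_{(1)}\otimes y^{2}J^{2}_{(2)},
\]
and compose both sides with $\mu_{H}\circ(\mu_{H}\otimes H)\circ(\rho_{H}\otimes H\otimes\rho_{H})$.
\begin{itemize}
\item The left side becomes $\rho_{H}(J^{1})J^{2}\,y^{1}\rho_{H}(y^{2})=v_{R}v_{R}^{-1}$.
\item On the right side, antimultiplicativity rewrites the expression as $\rho_{H}(J^{1})\rho_{H}(y^{1}_{(1)})\,y^{1}_{(2)}\,J^{2}_{(1)}\rho_{H}(J^{2}_{(2)})\,\rho_{H}(y^{2})$.
\item Applying $\rho_{H}\ast id_{H}=\varepsilon_{H}\otimes\eta_{H}$ to $y^{1}$ and $id_{H}\ast\rho_{H}=\varepsilon_{H}\otimes\eta_{H}$ to $J^{2}$ reduces this to $\rho_{H}(J^{1})\varepsilon_{H}(J^{2})\,\varepsilon_{H}(y^{1})\rho_{H}(y^{2})$.
\item Conormality of $J$ and $J^{-1}$, together with \eqref{gr8}, turns this into $\eta_{H}$.
\end{itemize}

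\textbf{Step 4: $v_{R}^{-1}\ast v_{R}=\eta_{H}$.} Here I use \eqref{twist-def3n},
\[
y^{1}\otimes J^{1}y^{2}\otimes J^{2}=y^{1}J^{1}_{(1)}\otimes y^{2}_{(1)}J^{1}_{(2)}\otimes y^{2}_{(2)}J^{2},
\]
composed with $\mu_{H}\circ(\mu_{H}\otimes H)\circ(H\otimes\rho_{H}\otimes H)$.
\begin{itemize}
\item By \eqref{gr10}, the left side becomes $y^{1}\rho_{H}(y^{2})\rho_{H}(J^{1})J^{2}=v_{R}^{-1}v_{R}$.
\item The right side becomes $y^{1}J^{1}_{(1)}\rho_{H}(J^{1}_{(2)})\,\rho_{H}(y^{2}_{(1)})y^{2}_{(2)}\,J^{2}$.
\item Using the two convolution identities once more, this collapses to $y^{1}\varepsilon_{H}(y^{2})\,\varepsilon_{H}(J^{1})J^{2}$, which equals $\eta_{H}$ by conormality.
\end{itemize}

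Finally, each Sweedler computation will be rewritten as the corresponding chain of compositions, using associativity of $\mu_{H}$, naturality of $c$, and the unit and counit properties.
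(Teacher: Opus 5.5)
Your proposal is correct and follows essentially the same route as the paper. The paper also proves $v_{R}\ast v_{R}^{-1}=\eta_{H}$ via \eqref{twist-def1}, which is the convolution form of your \eqref{twist-def4n}, and $v_{R}^{-1}\ast v_{R}=\eta_{H}$ via \eqref{twist-def3}; in both it collapses the result using \eqref{gr10}, the hypothesis $id_{H}\ast\rho_{H}=\varepsilon_{H}\otimes\eta_{H}$, the codivision identity \eqref{Rcodiv} (equivalently \eqref{gr5}) and conormality, and it obtains \eqref{VR1} and \eqref{VR1-new} exactly as in your Steps 1--2.
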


\begin{proof}
We first determine $v_{R}\ast v_{R}^{-1}$: 

\begin{itemize}
\itemindent=-22pt
\item[]$\hspace{0.38cm}	v_{R}\ast v_{R}^{-1}$

\item[]$= \mu_{H}\circ ((\mu_{H}\circ (\rho_{H}\otimes H))\otimes \rho_{H})\circ (H\otimes \mu_{H}\otimes H)\circ (J\otimes J^{-1})$ {\footnotesize (by   the associativity of $\mu_{H}$)}

\item[]$= \mu_{H}\circ ((\mu_{H}\circ (\rho_{H}\otimes H))\otimes \rho_{H})\circ (\partial_{1}(J)\ast \partial_{4}(J^{-1}))$ {\footnotesize (by   the convolution product)}

\item[]$= \mu_{H}\circ ((\mu_{H}\circ (\rho_{H}\otimes H))\otimes \rho_{H})\circ (\partial_{3}(J^{-1})\ast \partial_{2}(J))$ {\footnotesize (by   \eqref{twist-def1})}

\item[]$= \mu_{H}\circ ((\mu_{H}\circ ((\rho_{H}\circ \mu_{H})\otimes \mu_{H}))\otimes (\rho_{H}\circ \mu_{H}))\circ (H\otimes c_{H,H}\otimes c_{H,H}\otimes H)\circ (\delta_{H}\otimes c_{H,H}\otimes \delta_{H}) \circ  (J^{-1}\otimes J)$ 
\item[]$\hspace{0.38cm}${\footnotesize (by  the convolution product)}

\item[]$=\mu_{H}\circ (\mu_{H}\otimes H)\circ (H\otimes \mu_{H}\otimes H)
\circ (((\rho_{H}\otimes \rho_{H})\circ c_{H,H})\otimes (\mu_{H}\circ (H\otimes (id_{H}\ast \rho_{H})))\otimes \rho_{H})$
\item[]$\hspace{0.38cm}	\circ (H\otimes c_{H,H}\otimes c_{H,H}) \circ (\delta_{H}\otimes c_{H,H}\otimes H)\circ (J^{-1}\otimes J)$ 
{\footnotesize (by  \eqref{gr10}, the associativity of $\mu_{H}$, and the naturality}
\item[]$\hspace{0.38cm}${\footnotesize of $c$)}

\item[]$=\mu_{H}\circ (\rho_{H}\otimes \mu_{H})\circ (\delta_{H}\otimes \rho_{H})\circ J^{-1}$
{\footnotesize (by $id_{H}\ast \rho_{H}=\varepsilon_{H}\otimes \eta_{H}$, the conormal condition for $J$, the naturality}
\item[]$\hspace{0.38cm}${\footnotesize  of $c$, \eqref{gr8}, and the unit properties)}

\item[]$=(\varepsilon_{H}\otimes \mu_{H})\circ (((H\otimes \mu_{H})\circ (r_{H}\otimes H)\circ \delta_{H})\otimes \rho_{H})\circ J^{-1}$
{\footnotesize (by  associativity of $\mu_{H}$)}

\item[]$=(\varepsilon_{H}\otimes (\mu_{H}\circ (\eta_{H}\otimes \rho_{H}))\circ  J^{-1}$
{\footnotesize (by  \eqref{Rcodiv})}

\item[]$=\rho_{H}\circ \eta_{H}$
{\footnotesize (by  the conormal condition for $J^{-1}$ and the unit properties)}

\item[]$= \eta_{H}$
{\footnotesize (by  \eqref{gr8}).}

\end{itemize}

Next, we compute $v_{R}^{-1}\ast v_{R}$: 

\begin{itemize}
	\itemindent=-22pt
	\item[]$\hspace{0.38cm}	v_{R}^{-1}\ast v_{R}$
	
	\item[]$= \mu_{H}\circ (\mu_{H}\otimes H)\circ (H\otimes (\mu_{H}\circ (\rho_{H}\otimes \rho_{H}))\otimes H) \circ (J^{-1}\otimes J)$ {\footnotesize (by  the associativity of $\mu_{H}$)}
	
	\item[]$= \mu_{H}\circ (\mu_{H}\otimes H)\circ (H\otimes (\rho_{H}\circ \mu_{H}\circ c_{H,H})\otimes H) \circ (J^{-1}\otimes J)$ {\footnotesize (by  \eqref{gr10} and the symmetric condition of}
	\item[]$\hspace{0.38cm}	${\footnotesize  the category ${\sf C}$)}
	
	\item[]$= \mu_{H}\circ ((\mu_{H}\circ (H\otimes \rho_{H}))\otimes H)\circ (\partial_{4}(J)\ast \partial_{1}(J^{-1}))$ {\footnotesize (by   the convolution product)}
	
	\item[]$= \mu_{H}\circ ((\mu_{H}\circ (H\otimes \rho_{H}))\otimes H)\circ (\partial_{2}(J^{-1})\ast \partial_3(J))$ {\footnotesize (by   \eqref{twist-def3})}
	
	\item[]$= \mu_{H}\circ (\mu_{H}\otimes H)\circ(\mu_{H}\otimes (\rho_{H}\circ \mu_{H})\otimes \mu_{H})\circ (H\otimes ((c_{H,H}\otimes c_{H,H})\circ \delta_{H\otimes H})\otimes H)\circ (J^{-1}\otimes J)$
	\item[]$\hspace{0.38cm}${\footnotesize (by   the convolution product)}
	
	\item[]$= \mu_{H}\circ (\mu_{H}\otimes \mu_{H})\circ 
	(H\otimes (((id_{H}\ast \rho_{H})\otimes (\mu_{H}\circ (\rho_{H}\otimes H)))\circ ( c_{H,H}\otimes H)\circ (H\otimes c_{H,H})\circ (\delta_{H}\otimes H))\otimes H)$
	\item[]$\hspace{0.38cm}\circ (J^{-1}\otimes J)$ {\footnotesize (by \eqref{gr10}, the associativity of $\mu_{H}$, and the naturality of $c$)}
	
	\item[]$= \mu_{H}\circ (H\otimes ((\varepsilon_{H}\otimes \mu_{H})\circ (r_{H}\otimes H)\circ \delta_{H}))\circ J^{-1}$ 
	{\footnotesize (by $id_{H}\ast \rho_{H}=\varepsilon_{H}\otimes \eta_{H}$, the conormal condition for}
	\item[]$\hspace{0.38cm}${\footnotesize    $J$, the naturality of $c$, and the unit properties)}
	
	\item[]$= \eta_{H}$
	{\footnotesize (by  \eqref{Rcodiv}, the conormal condition for $J^{-1}$ and the unit properties).}
	
\end{itemize}

Therefore, $v_{R}$ is convolution invertible, with inverse given by $v_{R}^{-1}=\mu_{H}\circ (H\otimes \rho_{H})\circ J^{-1}$. 

The equality, \eqref{VR1} follows by \eqref{mu-ec},  \eqref{gr7}, and the conormal condition for $J$ and $J^{-1}$.  Finally, the proof of \eqref{VR1-new} is 
\begin{align*}
&\;\;\;\; \mu_{H}\circ (H\otimes \rho_{H})\circ \mu_{H\otimes H}\circ (J^{-1}\otimes \delta_{H})\\
& =\mu_{H}\circ ((\mu_{H}\circ (H\otimes (id_{H}\ast \rho_{H})))\otimes H)\circ (H\otimes c_{H,H})\circ (((H\otimes \rho_{H})\circ J^{-1})\otimes H) \\
&= v_{R}^{-1}\otimes \varepsilon_{H}, 
\end{align*}
where the first equality follows by \eqref{gr10}, the naturality of $c$, and the associativity of $\mu_{H}$. The second identity follows by $id_{H}\ast \rho_{H}=\varepsilon_{H}\otimes \eta_{H}$, the unit properties and the naturality of $c$.

\end{proof}

The left version of Lemma \ref{VR} admits a similar proof and is the following:

\begin{Lemma}
	\label{VL}
	Let $H$ be a non-coassociative bimonoid with left codivision $l_{H}$ and let $J$ be a twist. Put $\lambda_{H}=(H\otimes \varepsilon_{H})\circ l_{H}$. Define the morphism
	$$v_{L}=\mu_{H}\circ (H\otimes \lambda_{H})\circ J^{-1}:K\rightarrow H.$$
	
	If $\lambda_{H}\ast id_{H}= \varepsilon_{H}\otimes \eta_{H}$ holds, then 
	$v_{L}$ is convolution invertible with inverse $$v_{L}^{-1}=\mu_{H}\circ (\lambda_{H}\otimes H)\circ J$$ and satisfies that
	\begin{equation*}
		\varepsilon_{H}\circ v_{L}=\varepsilon_{H}\circ v_{L}^{-1}=id_{K},
	\end{equation*}
\begin{equation*}
	\mu_{H}\circ (\lambda_{H}\otimes H)\circ \mu_{H\otimes H}\circ (\delta_{H}\otimes J)= \varepsilon_{H}\otimes v_{L}^{-1}.
\end{equation*}
\end{Lemma}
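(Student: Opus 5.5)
The plan is to mirror the proof of Lemma \ref{VR} step by step, exchanging the roles of left and right and of $J$ and $J^{-1}$ throughout. The left-hand tools are already available. By the left version of Remark (iii), $\lambda_{H}$ is antimultiplicative. By the left version of Remark (ii), $\lambda_{H}\circ\eta_{H}=\eta_{H}$ and $\varepsilon_{H}\circ\lambda_{H}=\varepsilon_{H}$. The defining identity \eqref{Lcodiv} of $l_{H}$ will replace \eqref{Rcodiv}, and the hypothesis $\lambda_{H}\ast id_{H}=\varepsilon_{H}\otimes\eta_{H}$ will replace $id_{H}\ast\rho_{H}=\varepsilon_{H}\otimes\eta_{H}$. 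As in the paper, $J$ (and hence $J^{-1}$) is taken to be conormal.

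\emph{The product $v_{L}\ast v_{L}^{-1}$.} By associativity of $\mu_{H}$ this equals $\mu_{H}\circ(\mu_{H}\otimes H)\circ(H\otimes(\mu_{H}\circ(\lambda_{H}\otimes\lambda_{H}))\otimes H)\circ(J^{-1}\otimes J)$. Antimultiplicativity turns the middle factor into $\lambda_{H}\circ\mu_{H}\circ c_{H,H}$, so the expression becomes a morphism applied to $\partial_{4}(J)\ast\partial_{1}(J^{-1})$. I then use \eqref{twist-def3} to replace this by $\partial_{2}(J^{-1})\ast\partial_{3}(J)$ and expand the convolution. Antimultiplicativity and naturality of $c$ should produce an inner factor $\lambda_{H}\ast id_{H}$, which collapses to $\varepsilon_{H}\otimes\eta_{H}$. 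What remains is an expression of the form $(\mu_{H}\otimes\varepsilon_{H})\circ(\ldots(\mu_{H}\otimes H)\circ(H\otimes l_{H})\circ\delta_{H}\ldots)$ applied to $J$. Identity \eqref{Lcodiv} and conormality then reduce it to $\lambda_{H}\circ\eta_{H}=\eta_{H}$.

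\emph{The product $v_{L}^{-1}\ast v_{L}$.} This is the mirror of the first computation in Lemma \ref{VR}. I write it as a morphism applied to $\partial_{1}(J)\ast\partial_{4}(J^{-1})$, apply \eqref{twist-def1} to pass to $\partial_{3}(J^{-1})\ast\partial_{2}(J)$, and expand. Antimultiplicativity of $\lambda_{H}$ and associativity then isolate a factor $\lambda_{H}\ast id_{H}$, and \eqref{Lcodiv} together with conormality finish the computation. The two twist identities are used in the opposite order from Lemma \ref{VR}.

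\emph{The remaining identities.} Both counit identities follow from \eqref{mu-ec}, $\varepsilon_{H}\circ\lambda_{H}=\varepsilon_{H}$, and conormality of $J$ and $J^{-1}$. For the last identity, I expand $\mu_{H\otimes H}\circ(\delta_{H}\otimes J)$ and apply $\lambda_{H}\circ\mu_{H}=\mu_{H}\circ c_{H,H}\circ(\lambda_{H}\otimes\lambda_{H})$ to the left leg. After naturality of $c$ and associativity, the factor $\lambda_{H}\ast id_{H}$ appears on the input $H$ and gives $\varepsilon_{H}$. The factor left over on $J$ is exactly $\mu_{H}\circ(\lambda_{H}\otimes H)\circ J=v_{L}^{-1}$, so the result is $\varepsilon_{H}\otimes v_{L}^{-1}$.

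\emph{Main obstacle.} The hard part is the bookkeeping of symmetries in the two invertibility computations. I have to choose the right twist identity, \eqref{twist-def1} or \eqref{twist-def3}, for each product so that after antimultiplicativity the factor $\lambda_{H}\ast id_{H}$, and not $id_{H}\ast\lambda_{H}$, appears, because only the former is assumed. If a first attempt produces the wrong order, I will switch to the other twist identity.
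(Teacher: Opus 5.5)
Your proposal is correct and follows the paper's route, which is simply to mirror Lemma \ref{VR}: \eqref{twist-def3} for $v_{L}\ast v_{L}^{-1}$, \eqref{twist-def1} for $v_{L}^{-1}\ast v_{L}$, and the last identity via antimultiplicativity of $\lambda_{H}$ together with the hypothesis $\lambda_{H}\ast id_{H}=\varepsilon_{H}\otimes\eta_{H}$. Two harmless slips: first, both $\lambda_{H}\ast id_{H}$ and $id_{H}\ast\lambda_{H}$ appear in each product, and the latter follows automatically from \eqref{Lcodiv}, so there is no ``wrong order'' risk; second, $\lambda_{H}\circ\eta_{H}=\eta_{H}$ is needed in $v_{L}^{-1}\ast v_{L}$ rather than in $v_{L}\ast v_{L}^{-1}$, since the latter ends directly with \eqref{Lcodiv} and the conormality of $J$.
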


\begin{Lemma}
\label{VRR}
Let $H$ be a non-coassociative bimonoid with right codivision $r_{H}$, and let $J$ be a twist. Put $\rho_{H}=(\varepsilon_{H}\otimes H)\circ r_{H}$ and assume that  $id_{H}\ast \rho_{H}=\varepsilon_{H}\otimes \eta_{H}$. Let $v_{R} $ and $v_{R}^{-1} $ be the morphisms defined in Lemma \ref{VR}. Define the morphism $r_{H_{J}}: H\rightarrow H\otimes H$ by 
$$r_{H_{J}}=(H\otimes (\mu_{H}\circ ((\mu_{H}\circ (v_{R}^{-1}\otimes \rho_{H}))\otimes v_{R})))\circ \delta_{H_{J}},$$
where $\delta_{H_{J}}$ is the coproduct defined in Theorem \ref{firstmain}.

Then,  
\begin{equation}
\label{VRR1}
r_{H_{J}}\circ \mu_{H}=(\mu_{H}\otimes (\mu_{H}\circ c_{H,H}))\circ (H\otimes c_{H,H}\otimes H)\circ (r_{H_{J}}\otimes r_{H_{J}})
\end{equation}
holds. Moreover, 
\begin{equation}
	\label{VRR2}
(H\otimes \varepsilon_{H})\circ	r_{H_{J}}=id_{H}, 
\end{equation}
and 
\begin{equation}
	\label{VRR3}
(\varepsilon_{H}\otimes H)\circ	r_{H_{J}}=\mu_{H}\circ ((\mu_{H}\circ (v_{R}^{-1}\otimes \rho_{H})\otimes v_{R}))
\end{equation}
also holds. Consequently, we have the following equalities:
\begin{equation}
\label{VRR4}
(H\otimes ((\varepsilon_{H}\otimes H)\circ	r_{H_{J}}))\circ \delta_{H_{J}} =r_{H_{J}}
\end{equation}
and 
\begin{equation}
	\label{VRR5}
	(H\otimes \mu_{H})\circ	(r_{H_{J}}\otimes v_{R}^{-1})=(H\otimes (\mu_{H}\circ (v_{R}^{-1}\otimes \rho_{H})))\circ \delta_{H_{J}}.
\end{equation}

Finally, if $H$ is commutative, $\rho_{H}=(\varepsilon_{H}\otimes H)\circ r_{H_{J}}$.

\end{Lemma}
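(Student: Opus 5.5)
Write $\tau=\mu_{H}\circ((\mu_{H}\circ(v_{R}^{-1}\otimes\rho_{H}))\otimes v_{R}):H\to H$, so that by definition $r_{H_{J}}=(H\otimes\tau)\circ\delta_{H_{J}}$. All claims will be reduced to properties of $\tau$ together with Theorem \ref{firstmain}. Throughout we use that $J$ may be taken conormal, and that $\varepsilon_{H}$ is a morphism of monoids with $\varepsilon_{H}\circ v_{R}^{\pm1}=id_{K}$ by \eqref{VR1}.

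\emph{Step 1: the counit identities.} First, \eqref{gr7} and \eqref{VR1} give $\varepsilon_{H}\circ\tau=\varepsilon_{H}$. Hence $(H\otimes\varepsilon_{H})\circ r_{H_{J}}=(H\otimes\varepsilon_{H})\circ\delta_{H_{J}}=id_{H}$ by the counit property of $\delta_{H_{J}}$ proved in Theorem \ref{firstmain}; this is \eqref{VRR2}. Similarly, $(\varepsilon_{H}\otimes H)\circ r_{H_{J}}=\tau\circ(\varepsilon_{H}\otimes H)\circ\delta_{H_{J}}=\tau$, which is \eqref{VRR3}. Substituting \eqref{VRR3} back into the definition of $r_{H_{J}}$ gives \eqref{VRR4} immediately. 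For \eqref{VRR5}, compute $(H\otimes\mu_{H})\circ(r_{H_{J}}\otimes v_{R}^{-1})=(H\otimes(\tau\ast' v_R^{-1}))\circ\delta_{H_J}$, where $\tau\ast' v_R^{-1}$ denotes $\mu_{H}\circ(\tau\otimes v_{R}^{-1})$. Associativity and $v_{R}\ast v_{R}^{-1}=\eta_{H}$ (Lemma \ref{VR}) collapse this to $\mu_{H}\circ(v_{R}^{-1}\otimes\rho_{H})$.

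\emph{Step 2: antimultiplicativity \eqref{VRR1}.} This is the main step. The plan is to show that $\tau$ is antimultiplicative,
\[
\tau\circ\mu_{H}=\mu_{H}\circ c_{H,H}\circ(\tau\otimes\tau).
\]
Using \eqref{gr10} for $\rho_{H}$, we get
\[
\tau\circ\mu_{H}=v_{R}^{-1}\,\rho_{H}(y)\,\rho_{H}(x)\,v_{R}.
\]
Inserting $v_{R}\,v_{R}^{-1}=\eta_{H}$ between the two factors turns this into
\[
v_{R}^{-1}\rho_{H}(y)v_{R}\cdot v_{R}^{-1}\rho_{H}(x)v_{R}=\tau(y)\,\tau(x),
\]
using only associativity and Lemma \ref{VR}. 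Then expand
\[
r_{H_{J}}\circ\mu_{H}=(H\otimes\tau)\circ\delta_{H_{J}}\circ\mu_{H}=(H\otimes\tau)\circ\mu_{H\otimes H}\circ(\delta_{H_{J}}\otimes\delta_{H_{J}}),
\]
by the multiplicativity of $\delta_{H_{J}}$ (Theorem \ref{firstmain}). Applying antimultiplicativity of $\tau$ in the second tensor factor and rearranging with the naturality of $c$ yields the right-hand side of \eqref{VRR1}. The only delicate point is the bookkeeping of the symmetry, which mirrors the last display of Remark (iii). Note that unlike \eqref{gr9}, no invertibility of a Galois map for $H_{J}$ is needed here, since $r_{H_{J}}$ is given explicitly through $\tau$.

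\emph{Step 3: the commutative case.} If $H$ is commutative, $v_{R}^{-1}$ and $v_{R}$ commute with everything. Then $\tau=\rho_{H}\ast'(v_{R}^{-1}v_{R})=\rho_{H}$, and \eqref{VRR3} gives $\rho_{H}=(\varepsilon_{H}\otimes H)\circ r_{H_{J}}$.

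I expect Step 2 to be the main obstacle, but only for careful diagram manipulation; the conceptual content reduces to the conjugation of $\rho_{H}$ by the invertible element $v_{R}$.
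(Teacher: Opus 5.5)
Your proposal is correct and follows essentially the same route as the paper. For \eqref{VRR1}, the paper combines the antimultiplicativity of your $\tau$ (obtained from \eqref{gr10} and $v_R\ast v_R^{-1}=\eta_H$) with the multiplicativity of $\delta_{H_J}$ from Theorem \ref{firstmain}, only running the computation from the right-hand side; \eqref{VRR2}--\eqref{VRR5} and the commutative case follow, exactly as you argue, from $\varepsilon_H\circ\tau=\varepsilon_H$, the counit property of $\delta_{H_J}$ (where conormality of $J$ enters), associativity, and the convolution invertibility of $v_R$.
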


\begin{proof} The proof for \eqref{VRR1} is the following:
	
\begin{itemize}
	\itemindent=-22pt
	\item[]$\hspace{0.38cm}	(\mu_{H}\otimes (\mu_{H}\circ c_{H,H}))\circ (H\otimes c_{H,H}\otimes H)\circ (r_{H_{J}}\otimes r_{H_{J}})$
	
	\item[]$=(\mu_{H}\otimes (\mu_{H}\circ ((\mu_{H}\circ ((\mu_{H}\circ (v_{R}^{-1}\otimes \rho_{H}))\otimes v_{R}))\otimes (\mu_{H}\circ ((\mu_{H}\circ (v_{R}^{-1}\otimes \rho_{H}))\otimes v_{R})))  \circ  c_{H,H}))$
	\item[]$\hspace{0.38cm}\circ (H\otimes c_{H,H}\otimes H)\circ (\delta_{H_{J}}\otimes \delta_{H_{J}}) $ {\footnotesize (by  the naturality of $c$)}
	
	\item[]$=(\mu_{H}\otimes (\mu_{H}\circ (\mu_{H}\otimes H)\circ (H\otimes \mu_{H}\otimes H)\circ (v_{R}^{-1}\otimes (\mu_{H}\circ (\rho_{H}\otimes (v_{R}\ast v_{R}^{-1})))\otimes \rho_{H} \otimes v_{R})\circ  c_{H,H})) $
     \item[]$\hspace{0.38cm}\circ (H\otimes c_{H,H}\otimes H)\circ (\delta_{H_{J}}\otimes \delta_{H_{J}}) $ {\footnotesize (by   the associativity  of $\mu_{H}$)}
	
	\item[]$=(\mu_{H}\otimes (\mu_{H}\circ (\mu_{H}\otimes H)\circ (v_{R}^{-1}\otimes (\mu_{H}\circ (\rho_{H}\otimes \rho_{H} )\circ  c_{H,H})\otimes v_{R})))\circ (H\otimes c_{H,H}\otimes H)\circ (\delta_{H_{J}}\otimes \delta_{H_{J}})$
	\item[]$\hspace{0.38cm} $ {\footnotesize (by  the convolution invertibility of $v_{R}$ and unit properties)}

   \item[]$=(\mu_{H}\otimes (\mu_{H}\circ (\mu_{H}\otimes H)\circ (v_{R}^{-1}\otimes (\rho_{H}\circ \mu_{H})\otimes v_{R})))\circ (H\otimes c_{H,H}\otimes H)\circ (\delta_{H_{J}}\otimes \delta_{H_{J}})$ {\footnotesize (by  \eqref{gr10})}
	
    \item[]$=r_{H_{J}}\circ \mu_{H}$
	{\footnotesize (by the condition of non-coassociative bimonoid of $H_{J}$ (see Theorem \ref{firstmain})).}
	
\end{itemize}

On the other hand, equality \eqref{VRR2} follows using \eqref{mu-ec}, \eqref{gr7}, the condition of counit of $\varepsilon_{H}$ for $H_{J}$, and \eqref{VR1}. Applying the condition of counit of $\varepsilon_{H}$ for $H_{J}$ once again, we obtain the equality \eqref{VRR3}. Trivially, \eqref{VRR4} is a consequence of  \eqref{VRR3}. Moreover, \eqref{VRR5} follows directly from the associativity of $\mu_{H}$ and the convolution invertibility of $v_{R}$ because 
\begin{align*}
(H\otimes \mu_{H})\circ	(r_{H_{J}}\otimes v_{R}^{-1})
&=
(H\otimes (\mu_{H}\circ (v_{R}^{-1}\otimes (\mu_{H}\circ (\rho_{H}\otimes (v_{R}\ast v_{R}^{-1}))))))\circ \delta_{H_{J}}
\\
&=(H\otimes(\mu_{H}\circ(v_{R}^{-1}\otimes \rho_{H}) ) )\circ \delta_{H_{J}}.
\end{align*}

Finally, if $H$ is commutative, then 
$$(\varepsilon_{H}\otimes H)\circ	r_{H_{J}}=\mu_{H}\circ (\rho_{H}\otimes (v_{R}^{-1}\ast v_{R}))=\rho_{H}$$
holds by the naturality of $c$, the convolution invertibility of $v_{R}$ and the unit properties. 
	
\end{proof}

As in the previous lemmas we have a left version for Lemma \ref{VRR}. 

\begin{Lemma}
	\label{VLL}
	Let $H$ be a non-coassociative bimonoid with left codivision $l_{H}$ and let $J$ be a twist. Put $\lambda_{H}=(H\otimes \varepsilon_{H})\circ l_{H}$ and assume that  $\lambda_{H}\ast id_{H}=\varepsilon_{H}\otimes \eta_{H}$. Let $v_{L} $ and $v_{L}^{-1} $ be the morphisms defined in Lemma \ref{VL}. Define the morphism $l_{H_{J}}: H\rightarrow H\otimes H$ by 
	$$l_{H_{J}}=((\mu_{H}\circ (v_{L}\otimes (\mu_{H}\circ (\lambda_{H}\otimes v_{L}^{-1})))\otimes H)\circ \delta_{H_{J}},$$
	where $\delta_{H_{J}}$ is the coproduct defined in Theorem \ref{firstmain}.
	
	Then,
	\begin{equation*}
		l_{H_{J}}\circ \mu_{H}=((\mu_{H}\circ c_{H,H})\otimes \mu_{H})\circ (H\otimes c_{H,H}\otimes H)\circ (l_{H_{J}}\otimes l_{H_{J}})
	\end{equation*}
	holds. Moreover, 
	\begin{equation*}
		( \varepsilon_{H}\otimes H)\circ	l_{H_{J}}=id_{H}, 
	\end{equation*}
	and 
	\begin{equation*}
		(H\otimes \varepsilon_{H})\circ	l_{H_{J}}=\mu_{H}\circ (v_{L}\otimes (\mu_{H}\circ (\lambda_{H}\otimes v_{L}^{-1})))
	\end{equation*}
	also holds. As a consequence, we have the following equalities:
	\begin{equation*}
		(((H\otimes \varepsilon_{H})\circ	l_{H_{J}})\otimes H)\circ \delta_{H_{J}} =l_{H_{J}}
	\end{equation*}
	and 
	\begin{equation*}
		(\mu_{H}\otimes H)\circ	(v_{L}^{-1}\otimes l_{H_{J}} )=((\mu_{H}\circ (\lambda_{H}\otimes v_{L}^{-1}))\otimes H)\circ \delta_{H_{J}}.
	\end{equation*}
	
	Finally, if $H$ is commutative, then  $\lambda_{H}=(H\otimes \varepsilon_{H})\circ l_{H_{J}}$.
	
\end{Lemma}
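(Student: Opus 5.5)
The plan is to mirror the proof of Lemma \ref{VRR}, swapping left and right sides throughout. Write $w=\mu_{H}\circ (v_{L}\otimes (\mu_{H}\circ (\lambda_{H}\otimes v_{L}^{-1}))):H\rightarrow H$, so that $l_{H_{J}}=(w\otimes H)\circ \delta_{H_{J}}$. First I would record the left analogue of \eqref{gr10}, namely the antimultiplicativity of $\lambda_{H}$ from Remark (iii). Together with Lemma \ref{VL} (convolution invertibility of $v_{L}$, $\varepsilon_{H}\circ v_{L}^{\pm 1}=id_{K}$) and Theorem \ref{firstmain} ($H_{J}$ is a non-coassociative bimonoid), these are the only inputs needed.

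For the antimultiplicativity of $l_{H_{J}}$, expand the right-hand side $((\mu_{H}\circ c_{H,H})\otimes \mu_{H})\circ (H\otimes c_{H,H}\otimes H)\circ (l_{H_{J}}\otimes l_{H_{J}})$ as $(w\otimes w)$ applied to the first legs of $\delta_{H_{J}}\otimes\delta_{H_{J}}$, with the first legs multiplied in reversed order. This means the first leg carries $w(y_{1})\,w(x_{1})$, where $x$ is the left factor and $y$ the right factor. By associativity of $\mu_{H}$, the middle of this product contains $v_{L}^{-1}\ast v_{L}=\eta_{H}$ (from the inner end of $w(y_{1})$ and the start of $w(x_{1})$), which cancels. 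What remains is $v_{L}\,\lambda_{H}(y_{1})\lambda_{H}(x_{1})\,v_{L}^{-1}$, and by antimultiplicativity of $\lambda_{H}$ this equals $v_{L}\,\lambda_{H}(x_{1}y_{1})\,v_{L}^{-1}$. Since $\delta_{H_{J}}\circ\mu_{H}=\mu_{H\otimes H}\circ(\delta_{H_{J}}\otimes\delta_{H_{J}})$ by Theorem \ref{firstmain}, this is $l_{H_{J}}\circ\mu_{H}$.

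For the counit identities, use that $\varepsilon_{H}$ is multiplicative. Then $\varepsilon_{H}\circ w=(\varepsilon_{H}\circ v_{L})\,(\varepsilon_{H}\circ\lambda_{H})\,(\varepsilon_{H}\circ v_{L}^{-1})=\varepsilon_{H}$, using Lemma \ref{VL} and the left analogue of \eqref{gr7}. So $(\varepsilon_{H}\otimes H)\circ l_{H_{J}}=(\varepsilon_{H}\otimes H)\circ\delta_{H_{J}}=id_{H}$ by counitality of $H_{J}$. Likewise $(H\otimes\varepsilon_{H})\circ l_{H_{J}}=w\circ (H\otimes\varepsilon_{H})\circ\delta_{H_{J}}=w$. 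The identity $(((H\otimes \varepsilon_{H})\circ l_{H_{J}})\otimes H)\circ \delta_{H_{J}}=l_{H_{J}}$ is then just the definition of $l_{H_{J}}$ rewritten.

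For $(\mu_{H}\otimes H)\circ (v_{L}^{-1}\otimes l_{H_{J}})$, the first leg is $v_{L}^{-1}\,v_{L}\,\lambda_{H}(x_{1})\,v_{L}^{-1}$. Associativity and $v_{L}^{-1}\ast v_{L}=\eta_{H}$ reduce it to $\lambda_{H}(x_{1})\,v_{L}^{-1}$, which is the stated right-hand side. Finally, if $H$ is commutative, $w=\mu_{H}\circ(\lambda_{H}\otimes(v_{L}\ast v_{L}^{-1}))=\lambda_{H}$, using commutativity (and the naturality of $c$) to move $v_{L}$ next to $v_{L}^{-1}$.

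The only step needing care is the first one: tracking the symmetry morphisms so that the factors of $w(y_{1})$ precede those of $w(x_{1})$. This ordering is what makes the inner pair $v_{L}^{-1}\,v_{L}$ collapse. The reversed order comes from $\mu_{H}\circ c_{H,H}$ in the first leg, so with the naturality of $c$ this works out exactly as in the proof of \eqref{VRR1}.
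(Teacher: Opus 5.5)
Your proposal is correct and takes the same approach as the paper. The paper gives no separate proof of this lemma and simply mirrors the proof of Lemma \ref{VRR}, using exactly your ingredients: associativity to collapse the middle factor $v_L^{-1}\ast v_L=\eta_H$, antimultiplicativity of $\lambda_H$, and $\delta_{H_J}\circ\mu_H=\mu_{H\otimes H}\circ(\delta_{H_J}\otimes\delta_{H_J})$ for the first identity; and multiplicativity of $\varepsilon_H$, $\varepsilon_H\circ\lambda_H=\varepsilon_H$, $\varepsilon_H\circ v_L^{\pm1}=id_K$ and counitality of $H_J$ (for the standing conormal $J$) for the remaining ones.
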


\begin{Lemma}
	\label{VRBI}
	Let $H$ be a non-coassociative bimonoid with right codivision $r_{H}$, and let $J$ be a twist. Put $\rho_{H}=(\varepsilon_{H}\otimes H)\circ r_{H}$ and assume that  $id_{H}\ast \rho_{H}=\varepsilon_{H}\otimes \eta_{H}$. Let $v_{R} $ and $v_{R}^{-1} $ be the morphisms defined in Lemma \ref{VR} and $\delta_{H_{J}}$ the coproduct defined in Theorem \ref{firstmain}. Then, the equalities 
	\begin{equation}
	\label{VRBI1}
	(H\otimes \mu_{H})\circ (\delta_{H_{J}}\otimes ((\varepsilon_{H}\otimes H)\circ r_{H_{J}}))\circ J=(H\otimes \mu_{H})\circ (J^{-1}\otimes v_{R}), 
	\end{equation}
\begin{equation}
	\label{VRBI2}
	(H\otimes (\mu_{H}\circ (\rho_{H}\otimes v_{R})))\circ J^{-1}=(H\otimes \mu_{H})\circ (((H\otimes \rho_{H})\circ \delta_{H})\otimes H)\circ J, 
\end{equation}
and 
	\begin{equation}
	\label{VRBI3}
	(H\otimes \mu_{H})\circ (((H\otimes ((\varepsilon_{H}\otimes H)\circ	r_{H_{J}}))\circ \delta_{H_{J}})\otimes H)\circ J^{-1}=(H\otimes (\mu_{H}\circ (v_{R}^{-1}\otimes \rho_{H})))\circ J
\end{equation}
hold.
\end{Lemma}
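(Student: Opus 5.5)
The plan is to prove the three identities in the order \eqref{VRBI2}, \eqref{VRBI1}, \eqref{VRBI3}. Throughout, I replace $(\varepsilon_{H}\otimes H)\circ r_{H_{J}}$ by its explicit form $\rho_{H_J}:=\mu_{H}\circ((\mu_{H}\circ(v_{R}^{-1}\otimes\rho_{H}))\otimes v_{R})$ given by \eqref{VRR3}. I will also use freely that $\rho_H$ is antimultiplicative \eqref{gr10}, that $\rho_H\circ\eta_H=\eta_H$ \eqref{gr8}, that $id_H\ast\rho_H=\varepsilon_H\otimes\eta_H$, and that $J$ and $J^{-1}$ are conormal.

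\textbf{Proof of \eqref{VRBI2}.} This identity involves only $\delta_H$, so it should come from one of the convolution forms of the twist condition, exactly as in the proof of Lemma \ref{VR}. I rewrite the right-hand side as the image of $\partial_{3}(J)$ under $(H\otimes\mu_H)\circ(H\otimes\rho_H\otimes H)$. I then multiply by $J^{-1}$ on the appropriate legs and pass to $\partial_{2}(J^{-1})\ast\partial_{3}(J)$, which \eqref{twist-def3} (equivalently \eqref{twist-def3n}) replaces by $\partial_{4}(J)\ast\partial_{1}(J^{-1})$. The factor $\delta_H$ then sits on the second leg of $J^{-1}$. By \eqref{gr10} and $id_H\ast\rho_H=\varepsilon_H\otimes\eta_H$, this factor collapses, as in \eqref{VR1-new}. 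What remains is $J^{-1}$ on the first two legs, with $\rho_H$ of the first leg of $J$ multiplied by its second leg, i.e.\ $v_R$. If multiplying by $J^{-1}$ turns out awkward, I will instead compose both sides with the invertible operation of right multiplication by $J$ and compare, using \eqref{extra1}.

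\textbf{Proof of \eqref{VRBI1}.} Using the definition of $\delta_{H_J}$ (or \eqref{ofin1}), I write
\[
(\delta_{H_J}\otimes H)\circ J=\partial_1(J^{-1})\ast\partial_3(J)\ast\partial_1(J),
\]
and the twist identity \eqref{twist-def} turns this into $\partial_1(J^{-1})\ast\partial_2(J)\ast\partial_4(J)$. Next I apply $H\otimes(\mu_H\circ(H\otimes\rho_{H_J}))$. Expanding $\rho_{H_J}=v_R^{-1}\,\rho_H(-)\,v_R$, antimultiplicativity of $\rho_H$ reverses the product coming from $\partial_2(J)\ast\partial_4(J)$ in the third leg. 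The inner $v_R\ast v_R^{-1}$ cancels. The factor $\delta_H$ from $\partial_2(J)$ then collapses via $id_H\ast\rho_H=\varepsilon_H\otimes\eta_H$, and conormality of $J$ together with $\rho_H\circ\eta_H=\eta_H$ removes the remaining legs of $J$. What survives is $J^{-1}$ on the first two legs times $\rho_H(J^{2})J^{1}$-type terms, which should be recognisable as $v_R^{-1}$ times $v_R$ times $v_R$. After the cancellation $v_R^{-1}\ast v_R=\eta_H$ from Lemma \ref{VR}, one should obtain $(H\otimes\mu_H)\circ(J^{-1}\otimes v_R)$. I expect the exact bookkeeping of which copy of $J$ produces which $v_R^{\pm1}$ to need care.

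\textbf{Proof of \eqref{VRBI3}.} By \eqref{VRR4}, the left-hand side is $(H\otimes\mu_H)\circ(r_{H_J}\otimes H)\circ J^{-1}$. Using \eqref{VRR5} (after inserting $v_R\ast v_R^{-1}$), I reduce it to an expression with $(H\otimes(\mu_H\circ(v_R^{-1}\otimes\rho_H)))\circ\delta_{H_J}$ applied to the first leg of $J^{-1}$. Expanding $\delta_{H_J}$ via \eqref{ofin2}, with $J^{-1}$ on the left, the resulting pattern is the one handled by \eqref{twist-def2} (equivalently \eqref{twist-def2n}) and \eqref{VR1-new}. Finally, \eqref{VRBI2} converts the resulting $J^{-1}$-expression into one in $J$, giving $(H\otimes(\mu_H\circ(v_R^{-1}\otimes\rho_H)))\circ J$.

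\textbf{Main obstacle.} I expect \eqref{VRBI1} to be the hardest step. It is the only one where antimultiplicativity must be applied to a threefold product inside $\rho_{H_J}$ while simultaneously tracking three copies of $J^{\pm1}$, and choosing the right equivalent form of the twist condition, \eqref{twist-def} versus \eqref{twist-def3}, is what makes the collapse work.
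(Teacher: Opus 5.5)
Your choice of twist identities is sound, and for \eqref{VRBI1} and \eqref{VRBI3} it leads to proofs different from the paper's. But in each of the three parts the cancellation you announce is not the one that actually happens, so the sketches do not close as written. Below I use element notation (legitimate via the graphical calculus of a symmetric category). Write $J=J^{1}\otimes J^{2}=K^{1}\otimes K^{2}$ and $J^{-1}=\bar{J}^{1}\otimes\bar{J}^{2}=\tilde{J}^{1}\otimes\tilde{J}^{2}$ for independent copies, $\delta_H(h)=h_{1}\otimes h_{2}$, and $\rho_{H_J}=v_R^{-1}\rho_H(-)v_R$ by \eqref{VRR3}.

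\textbf{On \eqref{VRBI1}.} Writing $(\delta_{H_J}\otimes H)\circ J=\partial_{1}(J^{-1})\ast\partial_{3}(J)\ast\partial_{1}(J)$ and applying \eqref{twist-def} is correct. It also differs from the paper, which merges the right-hand $J$ of $\delta_{H_J}$ with the copy of $J^{-1}$ inside $v_R^{-1}$ into $\partial_{1}(J)\ast\partial_{4}(J^{-1})$, applies \eqref{twist-def1}, and finishes with $J\ast J^{-1}$ against the outer $J$. After \eqref{twist-def} and \eqref{gr10}, your left-hand side is
\[
\bar{J}^{1}J^{1}\otimes\bar{J}^{2}J^{2}_{1}\bigl(K^{1}v_R^{-1}\rho_H(K^{2})\bigr)\rho_H(J^{2}_{2})\,v_R .
\]
The bracket is not $v_R\ast v_R^{-1}$. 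Expand $v_R^{-1}=\tilde{J}^{1}\rho_H(\tilde{J}^{2})$ and use \eqref{gr10} again: the bracket becomes $\mu_H\circ(H\otimes\rho_H)\circ(J\ast J^{-1})=\rho_H\circ\eta_H=\eta_H$ by \eqref{gr8}. Then $J^{2}_{1}\rho_H(J^{2}_{2})$ collapses by $id_H\ast\rho_H=\varepsilon_H\otimes\eta_H$, and conormality of $J$ removes $J^{1}$. What remains is exactly $\bar{J}^{1}\otimes\bar{J}^{2}v_R$. No term of the form $v_R^{-1}v_Rv_R$ appears, and the invertibility of $v_R$ is never used. With your decomposition this is the shortest of the three identities, not the main obstacle.

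\textbf{On \eqref{VRBI2}.} This is the paper's argument via \eqref{twist-def3}, but the direction and the collapsing identity are off. By \eqref{gr10}, the left-hand side is already $(H\otimes(\mu_H\circ(\rho_H\otimes H)))\circ(\partial_{4}(J)\ast\partial_{1}(J^{-1}))$, so no multiplication by $J^{-1}$ is needed. \eqref{twist-def3} replaces this by $\partial_{2}(J^{-1})\ast\partial_{3}(J)$, and only there does $\delta_H$ sit on the second leg of $J^{-1}$, producing $\rho_H(\bar{J}^{2}_{1})\bar{J}^{2}_{2}$. That pattern is removed by \eqref{gr5}, $\rho_H\ast id_H=\varepsilon_H\otimes\eta_H$; $id_H\ast\rho_H$ does not apply to it. Conormality of $J^{-1}$ then gives the right-hand side. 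If you start from the right-hand side instead, you must insert $\partial_{2}(J)\ast\partial_{2}(J^{-1})=\eta_H\otimes\eta_H\otimes\eta_H$. The leftover $\delta_H$ then sits on the second leg of $J$, and again \eqref{gr5} is what removes it.

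\textbf{On \eqref{VRBI3}.} Your sketch mixes two different routes. Expand $\delta_{H_J}$ by its definition. The left-hand side is $\Psi(\partial_{1}(J^{-1})\ast\partial_{3}(J^{-1}))$, where $\Psi(a\otimes b\otimes c)=aJ^{1}\otimes v_R^{-1}\rho_H(J^{2})\rho_H(b)v_Rc$. Then \eqref{twist-def2} turns it into $\bar{J}^{1}J^{1}\otimes v_R^{-1}\rho_H(J^{2})\rho_H(\bar{J}^{2}_{1})\bigl(\rho_H(\tilde{J}^{1})v_R\tilde{J}^{2}\bigr)\bar{J}^{2}_{2}$. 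Expanding $v_R$, the bracket is $\mu_H\circ(\rho_H\otimes H)\circ(J\ast J^{-1})=\eta_H$; this is not \eqref{VR1-new}. Then \eqref{gr5} and conormality of $J^{-1}$ give the right-hand side immediately, so your final appeal to \eqref{VRBI2} has nothing left to act on. This yields a valid, self-contained proof.

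The paper's proof of \eqref{VRBI3} is different and uses no form of the twist identity at this point. It applies \eqref{VRBI2} to $\tilde{J}^{1}\otimes\rho_H(\tilde{J}^{2})v_R$, that is, the left factor $J^{-1}$ of $\delta_{H_J}$ together with the $v_R$ from $\rho_{H_J}$. After that, the new $J$ and the outer $J^{-1}$ combine into $(\delta_H\otimes H)\circ(J\ast J^{-1})=\eta_H\otimes\eta_H\otimes\eta_H$. Either route works, but you must commit to one.
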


\begin{proof}  We begin by establishing identity \eqref{VRBI1}:
	
\begin{itemize}
\itemindent=-22pt
	
\item[]$\hspace{0.38cm}	(H\otimes \mu_{H})\circ (\delta_{H_{J}}\otimes ((\varepsilon_{H}\otimes H)\circ r_{H_{J}}))\circ J$

\item[]$=(H\otimes \mu_{H}) \circ  ((\mu_{H\otimes H}\circ (\mu_{H\otimes H}\otimes H\otimes H)\circ (J^{-1}\otimes \delta_{H}\otimes J))\otimes (\mu_{H}\circ ((\mu_{H}\circ (v_{R}^{-1}\otimes \rho_{H})\otimes v_{R}))))\circ J$ 
\item[]$\hspace{0.38cm}	${\footnotesize (by  \eqref{VRR3})}

\item[]$=(\mu_{H}\otimes \mu_{H}) \circ (H\otimes \mu_{H\otimes H}\otimes\mu_{H})\circ (H\otimes c_{H,H}\otimes c_{H,H}\otimes H\otimes (\mu_{H}\circ (\rho_{H}\otimes (\mu_{H}\circ (\rho_{H}\otimes v_{R})))))$
\item[]$\hspace{0.38cm}	\circ (J^{-1}\otimes \delta_{H}\otimes ((H\otimes \mu_{H}\otimes H)\circ (J\otimes J^{-1}))\otimes H)\circ J$ {\footnotesize (by  the naturality of $c$ and the associativity }
\item[]$\hspace{0.38cm}	$ {\footnotesize of $\mu_{H}$)}

\item[]$=(\mu_{H}\otimes \mu_{H}) \circ (H\otimes \mu_{H\otimes H}\otimes\mu_{H})\circ (H\otimes c_{H,H}\otimes c_{H,H}\otimes H\otimes (\mu_{H}\circ (\rho_{H}\otimes (\mu_{H}\circ (\rho_{H}\otimes v_{R})))))$
\item[]$\hspace{0.38cm}	\circ (J^{-1}\otimes \delta_{H}\otimes (\partial_{1}(J)\ast \partial_{4}(J^{-1}))\otimes H)\circ J$ {\footnotesize (by  the convolution product)}

\item[]$=(\mu_{H}\otimes \mu_{H}) \circ (H\otimes \mu_{H\otimes H}\otimes\mu_{H})\circ (H\otimes c_{H,H}\otimes c_{H,H}\otimes H\otimes (\mu_{H}\circ (\rho_{H}\otimes (\mu_{H}\circ (\rho_{H}\otimes v_{R})))))$
\item[]$\hspace{0.38cm}	\circ (J^{-1}\otimes \delta_{H}\otimes (\partial_{3}(J^{-1})\ast \partial_{2}(J))\otimes H)\circ J$ {\footnotesize (by  \eqref{twist-def1})}

\item[]$=(H\otimes (\mu_{H}\circ (H\otimes \mu_{H}))) \circ ((\mu_{H\otimes H}\circ (H\otimes H\otimes c_{H,H}))\otimes H\otimes \mu_{H})$
\item[]$\hspace{0.38cm}\circ (((\mu_{H}\otimes H\otimes H)\circ (H\otimes c_{H,H}\otimes H)\circ (J^{-1}\otimes \delta_{H}))$
\item[]$\hspace{0.38cm}\otimes ((\mu_{H}\otimes \mu_{H}\otimes (\rho_{H}\circ \mu_{H}))\circ (H\otimes c_{H,H}\otimes c_{H,H}\otimes H)\circ (\delta_{H}\otimes c_{H,H}\otimes \delta_{H})\circ (J^{-1}\otimes J))\otimes (\mu_{H}\circ (\rho_{H}\otimes v_{R})))$
\item[]$\hspace{0.38cm}\circ J$ {\footnotesize (by  the  convolution product)}

\item[]$=(H\otimes (\mu_{H}\circ (H\otimes \mu_{H}))) \circ  ((\mu_{H\otimes H}\circ (H\otimes H\otimes c_{H,H}))$
\item[]$\hspace{0.38cm}\otimes (\mu_{H}\circ ((\mu_{H}\circ (H\otimes \rho_{H}))\otimes \rho_{H})\circ (\mu_{H}\otimes c_{H,H})\circ (H\otimes c_{H,H}\otimes H)\circ (H\otimes H\otimes \delta_{H}))\otimes H)$
\item[]$\hspace{0.38cm}\circ (((\mu_{H}\otimes H\otimes H)\circ (H\otimes c_{H,H}\otimes H)\circ (J^{-1}\otimes \delta_{H}))$
\item[]$\hspace{0.38cm}\otimes 
((\mu_{H}\otimes H\otimes H\otimes H)\circ (H\otimes c_{H,H}\otimes H\otimes H)\circ (\delta_{H}\otimes c_{H,H}\otimes H)\circ (J^{-1}\otimes J))\otimes (\mu_{H}\circ (\rho_{H}\otimes v_{R})))$
\item[]$\hspace{0.38cm}\circ J$ {\footnotesize (by \eqref{gr10} and  the naturality of $c$)}

\item[]$=(H\otimes (\mu_{H}\circ (H\otimes \mu_{H}))) \circ  ((\mu_{H\otimes H}\circ (H\otimes H\otimes c_{H,H}))\otimes H\otimes (\mu_{H}\circ (\rho_{H}\otimes H))) $ 
\item[]$\hspace{0.38cm}\circ  (((\mu_{H}\otimes H\otimes H)\circ (H\otimes c_{H,H}\otimes H)\circ (J^{-1}\otimes \delta_{H}))$
\item[]$\hspace{0.38cm} \otimes ((\mu_{H}\otimes (\mu_{H}\circ (H\otimes (id_{H}\ast \rho_{H}))\otimes H)\circ (H\otimes c_{H,H}\otimes c_{H,H})\circ (\delta_{H}\otimes c_{H,H}\otimes H)\circ (J^{-1}\otimes J))$
\item[]$\hspace{0.38cm}\otimes (\mu_{H}\circ (\rho_{H}\otimes v_{R})))\circ J$ {\footnotesize (by  the naturality of $c$ and the associativity of $\mu_{H}$)}

\item[]$=(H\otimes (\mu_{H}\circ (H\otimes \mu_{H})))$
\item[]$\hspace{0.38cm} \circ  (((\mu_{H\otimes H}\otimes H)\circ (\mu_{H}\otimes H\otimes H\otimes H\otimes H)\circ (H\otimes c_{H,H}\otimes H\otimes H\otimes H)\circ (J^{-1}\otimes \delta_{H\otimes H}))$
\item[]$\hspace{0.38cm}\otimes (\mu_{H}\circ ((\mu_{H}\circ (\rho_{H}\otimes \rho_{H}))\otimes v_{R})))$
\item[]$\hspace{0.38cm}\circ (H\otimes J^{-1}\otimes H)\circ J$
{\footnotesize (by  $id_{H}\ast \rho_{H}=\varepsilon_{H}\otimes \eta_{H}$, the conormal condition, and the unit properties)}

\item[]$=(H\otimes \mu_{H})\circ ((\mu_{H\otimes H}\circ (J^{-1}\otimes (\mu_{H\otimes H}\circ (\delta_{H}\otimes \delta_{H}))))\otimes  (\mu_{H}\circ ((\mu_{H}\circ (\rho_{H}\otimes \rho_{H}))\otimes v_{R})))$
\item[]$\hspace{0.38cm}\circ (H\otimes J^{-1}\otimes H)\circ J$ {\footnotesize (by  the naturality of $c$ and the associativity of $\mu_{H}$)}

\item[]$=(H\otimes \mu_{H})\circ ((\mu_{H\otimes H}\circ (J^{-1}\otimes (\delta_{H}\circ \mu_{H})))\otimes  (\mu_{H}\circ ((\mu_{H}\circ (\rho_{H}\otimes \rho_{H}))\otimes v_{R})))\circ (H\otimes J^{-1}\otimes H)\circ J$
\item[]$\hspace{0.38cm}$ {\footnotesize (by  \eqref{mu-ec})}

\item[]$=(H\otimes \mu_{H})\circ ((\mu_{H\otimes H}\circ (J^{-1}\otimes (\delta_{H}\circ \mu_{H})))\otimes  (\mu_{H}\circ ((\mu_{H}\circ (\rho_{H}\otimes \rho_{H})\circ c_{H,H})\otimes v_{R})))\circ (H\otimes c_{H,H}\otimes H)$
\item[]$\hspace{0.38cm}\circ  (J\otimes J^{-1})$ {\footnotesize (by  the naturality of $c$)}

\item[]$=(H\otimes \mu_{H})\circ ((\mu_{H\otimes H}\circ (J^{-1}\otimes \delta_{H}))\otimes  (\mu_{H}\circ (\rho_{H}\otimes v_{R})))\circ  (J\ast  J^{-1})$ {\footnotesize (by  \eqref{gr10})}

\item[]$=(H\otimes \mu_{H})\circ (J^{-1}\otimes v_{R})$ {\footnotesize (by  $J\ast J^{-1}=\eta_{H}\otimes \eta_{H}$, \eqref{gr8}, \eqref{eta-ec}, the naturality of $c$, and the unit properties).}

\end{itemize}	

Next, we prove that \eqref{VRBI2} holds. Indeed: 
	
\begin{itemize}
	\itemindent=-22pt
	\item[]$\hspace{0.38cm}	(H\otimes (\mu_{H}\circ (\rho_{H}\otimes v_{R})))\circ J^{-1}$
	
	\item[]$=(H\otimes (\mu_{H}\circ ((\mu_{H}\circ ( \rho_{H}\otimes \rho_{H}))\otimes H)))\circ (J^{-1}\otimes J)$ {\footnotesize (by  the associativity of $\mu_{H}$)}
	
	\item[]$=(H\otimes (\mu_{H}\circ (\rho_{H}\otimes H)))\circ (H\otimes (\mu_{H}\circ c_{H,H})\otimes H)\circ (J^{-1}\otimes J)$ {\footnotesize (by  \eqref{gr10} and the symmetric character}
	\item[]$\hspace{0.38cm}$ {\footnotesize of ${\sf C}$)}
	
	\item[]$=(H\otimes (\mu_{H}\circ (\rho_{H}\otimes H)))\circ (\partial_{4}(J)\ast \partial_{1}(J^{-1}))$ {\footnotesize (by  the convolution product)}
	
	\item[]$=(H\otimes (\mu_{H}\circ (\rho_{H}\otimes H)))\circ (\partial_{2}(J^{-1})\ast \partial_{3}(J))$ {\footnotesize (by  \eqref{twist-def3})}

	\item[]$=(\mu_{H}\otimes (\mu_{H}\circ ((\rho_{H}\circ \mu_{H})\otimes \mu_{H})))\circ (H\otimes ((c_{H,H}\otimes c_{H,H})\circ \delta_{H\otimes H})\otimes H)\circ (J^{-1}\otimes J) $ {\footnotesize (by the}
	\item[]$\hspace{0.38cm}$ {\footnotesize  convolution product)}
	
	\item[]$=(H\otimes \mu_{H})\circ (((\mu_{H}\otimes \rho_{H})\circ (H\otimes \delta_{H}))\otimes (\mu_{H}\circ ((\rho_{H}\ast id_{H})\otimes H)))\circ (H\otimes c_{H,H}\otimes H)\circ (J^{-1}\otimes J)$
	\item[]$\hspace{0.38cm} $ {\footnotesize (by  \eqref{gr10}), the naturality of $c$, and the associativity of $\mu_{H}$)}

	\item[]$=(H\otimes \mu_{H})\circ (((H\otimes \rho_{H})\circ \delta_{H})\otimes H)\circ J$ {\footnotesize (by  \eqref{gr5}, the conormal condition of $J^{-1}$, and the properties of}
	\item[]$\hspace{0.38cm}$ {\footnotesize  the unit).}
	
\end{itemize}

To conclude, we establish the  identity  \eqref{VRBI3} as follows:

\begin{itemize}
	\itemindent=-22pt
	\item[]$\hspace{0.38cm}	(H\otimes \mu_{H})\circ (((H\otimes ((\varepsilon_{H}\otimes H)\circ	r_{H_{J}}))\circ \delta_{H_{J}})\otimes H)\circ J^{-1}$
	
	\item[]$= (\mu_{H}\otimes \mu_{H})\circ (H\otimes ((H\otimes (\mu_{H}\circ (v_{R}^{-1}\otimes (\mu_{H}\circ c_{H,H}))))\circ (c_{H,H}\otimes \rho_{H})\circ (H\otimes J))\otimes H)$ 
	\item[]$\hspace{0.38cm} \circ (((\mu_{H}\otimes (\mu_{H}\circ c_{H,H}\circ (\rho_{H}\otimes \rho_{H})))\circ (H\otimes c_{H,H}\otimes H)\circ (J^{-1}\otimes \delta_{H}))\otimes (\mu_{H}\circ (v_{R}\otimes H)))\circ J^{-1}$
	\item[]$\hspace{0.38cm}$ {\footnotesize (by the naturality of $c$, the associativity of $\mu_{H}$, and \eqref{gr10})}
	
	\item[]$= (\mu_{H}\otimes \mu_{H})\circ (H\otimes ((H\otimes (\mu_{H}\circ (\mu_{H}\otimes \mu_{H})\circ (v_{R}^{-1}\otimes c_{H,H}\otimes \rho_{H})))\circ (c_{H,H}\otimes c_{H,H})\circ (H\otimes c_{H,H}\otimes \rho_{H})$
	\item[]$\hspace{0.38cm}\circ (H\otimes H\otimes J))\otimes H) \circ (((\mu_{H}\otimes c_{H,H})\circ (H\otimes c_{H,H}\otimes \rho_{H})\circ (J^{-1}\otimes \delta_{H}))\otimes (\mu_{H}\circ (v_{R}\otimes H)))\circ J^{-1}$ \item[]$\hspace{0.38cm}${\footnotesize (by the naturality of $c$ and the associativity of $\mu_{H}$)}
	
	\item[]$=(\mu_{H}\otimes \mu_{H})\circ (H\otimes ((H\otimes (\mu_{H}\circ ((\mu_{H}\circ (v_{R}^{-1}\otimes \rho_{H}))\otimes (\mu_{H}\circ (H\otimes \rho_{H})))))\circ (J\otimes H\otimes H))\otimes H)  $ 
	\item[]$\hspace{0.38cm}\circ (((\mu_{H}\otimes c_{H,H})\circ (H\otimes c_{H,H}\otimes \rho_{H})\circ (J^{-1}\otimes \delta_{H}))\otimes  (\mu_{H}\circ (v_{R}\otimes H)))\circ J^{-1}$ {\footnotesize (by the naturality }
	\item[]$\hspace{0.38cm}$ {\footnotesize of $c$ and the associativity of $\mu_{H}$)}
	
	\item[]$=(\mu_{H}\otimes \mu_{H})\circ (H\otimes ((H\otimes \mu_{H})\circ (((\mu_{H}\otimes (\mu_{H}\circ (v_{R}^{-1}\otimes \rho_{H})))\circ (H\otimes J))\otimes (\mu_{H}\circ (\rho_{H}\otimes \rho_{H})))$
	\item[]$\hspace{0.38cm}\circ (\delta_{H}\otimes H)\circ c_{H,H})\otimes (\mu_{H}\circ (v_{R}\otimes H)))\circ (J^{-1}\otimes J^{-1})${\footnotesize (by the naturality of $c$ and the associativity of}
	\item[]$\hspace{0.38cm}$ {\footnotesize $\mu_{H}$)}

	\item[]$= (\mu_{H}\otimes \mu_{H})\circ (H\otimes ((H\otimes \mu_{H})\circ (\mu_{H}\otimes v_{R}^{-1}\otimes (\mu_{H}\circ (\rho_{H}\otimes \rho_{H})))\circ (H\otimes J\otimes H)\circ \delta_{H})\otimes \mu_{H})$
	\item[]$\hspace{0.38cm}\circ (H\otimes H\otimes (\mu_{H}\circ (\rho_{H}\otimes v_{R}))\otimes H)\circ (H\otimes c_{H,H}\otimes H)\circ  (J^{-1}\otimes J^{-1})$ {\footnotesize (by  the associativity of $\mu_{H}$)}

	\item[]$= (\mu_{H}\otimes \mu_{H})\circ (H\otimes ((H\otimes \mu_{H})\circ (\mu_{H}\otimes v_{R}^{-1}\otimes (\mu_{H}\circ (\rho_{H}\otimes \rho_{H})))\circ (H\otimes J\otimes H)\circ \delta_{H})\otimes \mu_{H})$
	\item[]$\hspace{0.38cm}\circ  (H\otimes c_{H,H}\otimes H)\circ  (((H\otimes (\mu_{H}\circ (\rho_{H}\otimes v_{R})))\circ J^{-1})\otimes J^{-1})$
	{\footnotesize (by the naturality of $c$)}

   \item[]$= (\mu_{H}\otimes \mu_{H})\circ (H\otimes ((H\otimes \mu_{H})\circ (\mu_{H}\otimes v_{R}^{-1}\otimes (\mu_{H}\circ (\rho_{H}\otimes \rho_{H})))\circ (H\otimes J\otimes H)\circ \delta_{H})\otimes \mu_{H})$
   \item[]$\hspace{0.38cm}\circ  (H\otimes c_{H,H}\otimes H)\circ  ((((H\otimes \mu_{H})\circ (((H\otimes \rho_{H})\circ \delta_{H})\otimes H)\circ J))\otimes J^{-1})$
   {\footnotesize (by \eqref{VRBI2})}

    \item[]$=(\mu_{H}\otimes (\mu_{H}\circ (H\otimes \mu_{H})))\circ 
    (H\otimes ((((H\otimes \mu_{H})\circ (\mu_{H}\otimes ((\mu_{H}\circ (v_{R}^{-1}\otimes \rho_{H})) $ 
    \item[]$\hspace{0.38cm}\otimes (\mu_{H}\circ (\rho_{H}\otimes \rho_{H})\circ c_{H,H})))\circ (H\otimes J\otimes H\otimes H))\otimes H)\circ  (c_{H,H}\otimes c_{H,H})\circ (H\otimes c_{H,H}\otimes H))\otimes H)$
     \item[]$\hspace{0.38cm}\circ (((\delta_{H}\otimes H)\circ J)\otimes ((\delta_{H}\otimes H)\circ J^{-1}))$
    {\footnotesize (by the naturality of $c$ and the associativity of $\mu_{H}$)}

    \item[]$=((\mu_{H}\circ (\mu_{H}\otimes H))\otimes (\mu_{H}\circ (H\otimes \mu_{H})))$
    \item[]$\hspace{0.38cm}\circ 
    (H\otimes H\otimes ((H\otimes \mu_{H})\circ (H\otimes  (\mu_{H}\circ (v_{R}^{-1}\otimes \rho_{H}))\otimes  (\rho_{H}\circ \mu_{H}))\circ (J\otimes H\otimes H)) \otimes H\otimes H)$
    \item[]$\hspace{0.38cm}\circ (H\otimes c_{H,H}\otimes c_{H,H}\otimes H)\circ 
    (H\otimes H\otimes  c_{H,H}\otimes H\otimes H)\circ (((\delta_{H}\otimes H)\circ J)\otimes ((\delta_{H}\otimes H)\circ J^{-1}))$
     \item[]$\hspace{0.38cm}${\footnotesize (by  \eqref{gr10})}

	\item[]$=(\mu_{H}\otimes (\mu_{H}\circ ((\mu_{H}\circ (v_{R}^{-1}\otimes (\mu_{H}\circ (\rho_{H}\otimes \rho_{H}))))\otimes H)))\circ  (H\otimes J\otimes H\otimes H)\circ (\delta_{H}\otimes H)\circ (J\ast J^{-1})$ 
	 \item[]$\hspace{0.38cm}${\footnotesize (by  the naturality of $c$ and \eqref{mu-ec})}

	\item[]$= (H\otimes (\mu_{H}\circ (v_{R}^{-1}\otimes \rho_{H})))\circ J$ {\footnotesize (by  \eqref{eta-ec}, \eqref{gr8} and the unit properties)}.

\end{itemize}

\end{proof}

As pointed in the preceding lemmas, Lemma  \ref{VRBI} also admits a left analogue, stated as follows:

\begin{Lemma}
	\label{VLBI}
Let $H$ be a non-coassociative bimonoid with left codivision $l_{H}$ and let $J$ be a twist. Put $\lambda_{H}=(H\otimes \varepsilon_{H})\circ l_{H}$ and assume that  $\lambda_{H}\ast id_{H}=\varepsilon_{H}\otimes \eta_{H}$. Let $v_{L} $ and $v_{L}^{-1} $ be the morphisms defined in Lemma \ref{VL} and $\delta_{H_{J}}$  the coproduct defined in Theorem \ref{firstmain}.Then, the equalities 
\begin{equation}
	\label{VLBI1}
	(\mu_{H}\otimes H)\circ ((( H\otimes \varepsilon_{H})\circ l_{H_{J}})\otimes \delta_{H_{J}})\circ J^{-1}=( \mu_{H}\otimes H)\circ (v_{L}\otimes J), 
\end{equation}
\begin{equation}
	\label{VLBI2}
	((\mu_{H}\circ ( v_{L}\otimes \lambda_{H}))\otimes H)\circ J=(\mu_{H}\otimes H)\circ (H\otimes ((\lambda_{H}\otimes H)\circ \delta_{H}))\circ J^{-1}, 
\end{equation}
and 
\begin{equation}
	\label{VRLI3}
	(\mu_{H}\otimes H)\circ (H\otimes ((((H\otimes \varepsilon_{H})\circ	l_{H_{J}})\otimes H)\circ \delta_{H_{J}}))\circ J=( (\mu_{H}\circ (\lambda_{H}\otimes v_{L}^{-1}))\otimes H)\circ J^{-1}
         \end{equation}
hold.
\end{Lemma}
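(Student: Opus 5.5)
The plan is to mirror, line by line, the proof of Lemma \ref{VRBI}. We interchange the roles of left and right throughout: $\mu_{H}\otimes H$ replaces $H\otimes\mu_{H}$, $\lambda_{H}$ replaces $\rho_{H}$, and $v_{L}$ replaces $v_{R}^{-1}$, with $J$ and $J^{-1}$ exchanged where the mirror image requires it. The tools are the left analogues of the facts used there. These are the antimultiplicativity of $\lambda_{H}$ (Remark, item (iii)), the identities $\lambda_{H}\ast id_{H}=\varepsilon_{H}\otimes\eta_{H}$ (by hypothesis) and $\lambda_{H}\circ\eta_{H}=\eta_{H}$, Lemma \ref{VL}, and the formula $(H\otimes\varepsilon_{H})\circ l_{H_{J}}=\mu_{H}\circ(v_{L}\otimes(\mu_{H}\circ(\lambda_{H}\otimes v_{L}^{-1})))$ from Lemma \ref{VLL}. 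We also need the twist identities \eqref{twist-def1} and \eqref{twist-def3}, now used in their mirrored placements. I would prove \eqref{VLBI2} first, then \eqref{VLBI1}, and finally \eqref{VRLI3}, since the last uses \eqref{VLBI2} just as \eqref{VRBI3} used \eqref{VRBI2}.

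For \eqref{VLBI2}, expand $v_{L}=\mu_{H}\circ(H\otimes\lambda_{H})\circ J^{-1}$ and use associativity to write the left-hand side as
\[
((\mu_{H}\circ(H\otimes(\mu_{H}\circ(\lambda_{H}\otimes\lambda_{H}))))\otimes H)\circ(J^{-1}\otimes J).
\]
Antimultiplicativity of $\lambda_{H}$ together with symmetry rewrites the inner factor as $\lambda_{H}\circ\mu_{H}\circ c_{H,H}$. The resulting expression is then $((\mu_{H}\circ(H\otimes\lambda_{H}))\otimes H)$ applied to a convolution product of coface terms. I expect it to be $\partial_{4}(J)\ast\partial_{1}(J^{-1})$ in mirrored form, which \eqref{twist-def3} converts to $\partial_{2}(J^{-1})\ast\partial_{3}(J)$. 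Expanding that convolution product and applying antimultiplicativity again produces a factor $id_{H}\ast\lambda_{H}$ or $\lambda_{H}\ast id_{H}$. Its collapse, using the hypothesis (or the identity from Lemma \ref{VL}), conormality of $J$, and the unit properties, should leave $(\mu_{H}\otimes H)\circ(H\otimes((\lambda_{H}\otimes H)\circ\delta_{H}))\circ J^{-1}$.

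For \eqref{VLBI1}, substitute the formula from Lemma \ref{VLL} and the definition of $\delta_{H_{J}}$. Using associativity, isolate a copy of $(H\otimes\mu_{H}\otimes H)\circ(J\otimes J^{-1})$ or its mirror. Rewrite it as $\partial_{1}(J)\ast\partial_{4}(J^{-1})=\partial_{3}(J^{-1})\ast\partial_{2}(J)$ via \eqref{twist-def1}, or via the appropriate companion identity among \eqref{twist-def}--\eqref{twist-def3} once the sides are flipped. Then:
\begin{itemize}
\item expand the convolution product;
\item use antimultiplicativity of $\lambda_{H}$ to gather $\lambda_{H}\ast id_{H}$ and kill it;
\item recombine $\mu_{H\otimes H}\circ(\delta_{H}\otimes\delta_{H})$ into $\delta_{H}\circ\mu_{H}$ using \eqref{mu-ec};
\item finally reduce with $J\ast J^{-1}=\eta_{H}\otimes\eta_{H}$, \eqref{eta-ec}, and $\lambda_{H}\circ\eta_{H}=\eta_{H}$ to reach $(\mu_{H}\otimes H)\circ(v_{L}\otimes J)$.
\end{itemize}
For \eqref{VRLI3}, expand in the same way, insert \eqref{VLBI2} at the corresponding point, and then cancel with $J\ast J^{-1}$ and the unit and counit properties, ending at $((\mu_{H}\circ(\lambda_{H}\otimes v_{L}^{-1}))\otimes H)\circ J^{-1}$.

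The main obstacle is bookkeeping rather than ideas. We must choose exactly which of the four equivalent twist identities is the mirror of the one used in the right-hand proof, and track the symmetry morphisms $c$ so that the $\lambda_{H}\ast id_{H}$ factor appears with the correct orientation. I would settle this first by checking each identity in Sweedler-type notation in ${\mathbb K}$-{\sf Vect}, where the mirror correspondence is transparent, and then transcribe the result into the categorical chain of equalities.
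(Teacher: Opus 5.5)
Your proposal is correct and follows the paper's own route: the paper gives no separate argument and presents Lemma~\ref{VLBI} simply as the left analogue of Lemma~\ref{VRBI}, and your use of \eqref{twist-def3} for \eqref{VLBI2} and \eqref{twist-def1} for \eqref{VLBI1} is right, since the same products $\partial_{4}(J)\ast\partial_{1}(J^{-1})$ and $\partial_{1}(J)\ast\partial_{4}(J^{-1})$ appear as in the right-hand case. Two bookkeeping fixes: reading off the statements, the correspondence is $\rho_{H}\mapsto\lambda_{H}$, $v_{R}\mapsto v_{L}$, $v_{R}^{-1}\mapsto v_{L}^{-1}$ together with $J\leftrightarrow J^{-1}$ (not $v_{R}^{-1}\mapsto v_{L}$), and the convolution factor produced in \eqref{VLBI2} is $id_{H}\ast\lambda_{H}$, which reduces to $\varepsilon_{H}\otimes\eta_{H}$ by the left version of \eqref{gr5} in the Remark (automatic for a left codivision), rather than by the hypothesis $\lambda_{H}\ast id_{H}=\varepsilon_{H}\otimes\eta_{H}$, which is instead what collapses the factor arising in \eqref{VLBI1}.
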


The following two theorems are the main results of this  subsection.

\begin{theorem}
\label{mailR3} Let 	$H$ be a right Hopf coquasigroup with right antipode $\rho_{H}$. Let $J$ be a twist and $H_{J}$ the non-coassociative bimonoid introduced in Theorem \ref{firstmain}. Then, $H_{J}$ is a right Hopf coquasigroup with right antipode 
$$\rho_{H_{J}}=\mu_{H}\circ ((\mu_{H}\circ (v_{R}^{-1}\otimes \rho_{H})\otimes v_{R})),$$
where $v_{R}$ and $v_{R}^{-1}$ are the morphisms introduced in Lemma \ref{VR}.

\end{theorem}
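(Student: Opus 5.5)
The plan is to use the characterization of Theorem \ref{eqivRTH} applied to $H_J$: it suffices to produce a right codivision $r_{H_J}$ of $H_J$ with $r_{H_J}=(H\otimes \rho_{H_J})\circ\delta_{H_J}$, where $\rho_{H_J}=(\varepsilon_H\otimes H)\circ r_{H_J}$. Since $H$ is a right Hopf coquasigroup, the same theorem gives a right codivision $r_H=(H\otimes\rho_H)\circ\delta_H$ of $H$. Moreover, by (\ref{Rhcg1}) we have $id_H\ast\rho_H=\varepsilon_H\otimes\eta_H$. Hence Lemmas \ref{VR}, \ref{VRR} and \ref{VRBI} are all available. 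As candidate I take the morphism $r_{H_J}$ of Lemma \ref{VRR}. By (\ref{VRR3}), $(\varepsilon_H\otimes H)\circ r_{H_J}$ is exactly the stated $\rho_{H_J}$, and by (\ref{VRR4}) we already have $r_{H_J}=(H\otimes\rho_{H_J})\circ\delta_{H_J}$. So everything reduces to checking the two identities (\ref{Rcodiv}) for $r_{H_J}$ with respect to $\delta_{H_J}$, namely
$$\gamma^{J}_R\circ r_{H_J}=H\otimes\eta_H=(H\otimes\mu_H)\circ(r_{H_J}\otimes H)\circ\delta_{H_J},\qquad \gamma^J_R=(H\otimes\mu_H)\circ(\delta_{H_J}\otimes H).$$

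For the second identity I would expand the outer $\delta_{H_J}=\mu_{H\otimes H}\circ(\mu_{H\otimes H}\otimes H\otimes H)\circ(J^{-1}\otimes\delta_H\otimes J)$. Then I would push $r_{H_J}$ through the products $J^{-1}\cdot\delta_H(-)\cdot J$ on the first leg using the multiplicativity formula (\ref{VRR1}). The factor $r_{H_J}$ applied to the first leg of $J^{-1}$ then pairs with the second leg of $J^{-1}$. This is precisely the left-hand side of (\ref{VRBI3}) after using (\ref{VRR4}), and it turns into $(H\otimes(\mu_H\circ(v_R^{-1}\otimes\rho_H)))\circ J$. The middle factor $r_{H_J}\circ$(first leg of $\delta_H$) is rewritten through (\ref{VRR5}), which trades $r_{H_J}$ for $\delta_{H_J}$ composed with $v_R^{-1}\ast\rho_H$. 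After re-expanding $\delta_{H_J}$ and cancelling $J\ast J^{-1}$ via (\ref{ofin1}) and (\ref{ofin2}), the terms regroup into $(H\otimes\mu_H)\circ(r_H\otimes H)\circ\delta_H$, which equals $H\otimes\eta_H$ by (\ref{Rcodiv}) for $H$. The remaining twist factors cancel using (\ref{VRBI2}), the invertibility of $v_R$, and conormality.

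For the first identity I would proceed symmetrically. I write $r_{H_J}=(H\otimes\rho_{H_J})\circ\delta_{H_J}$, so that $\gamma_R^J\circ r_{H_J}=(H\otimes\mu_H)\circ(\delta_{H_J}\otimes\rho_{H_J})\circ\delta_{H_J}$. I expand the outer $\delta_{H_J}$ and use (\ref{ofin1}) to move $J$ past $\delta_{H_J}$ on the left leg. Then (\ref{VRBI1}) converts $\delta_{H_J}\otimes\rho_{H_J}$ applied against $J$ into $J^{-1}\otimes v_R$. The $v_R$ cancels with the $v_R^{-1}$ built into $\rho_{H_J}$, and (\ref{VR1-new}) absorbs the $J^{-1}$-term into a counit. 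What remains is the first identity of (\ref{Rhcg}) for $H$.

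The main obstacle is this bookkeeping in the two long diagrammatic computations, especially the second identity. There the twist factors and the scalars $v_R^{\pm1}$ (morphisms $K\to H$, which need not be central) must be commuted into exactly the positions where Lemma \ref{VRBI} and (\ref{twist-def1}), (\ref{twist-def3}) apply. In each step I would first try to reduce to the already established shapes (\ref{VRBI1})--(\ref{VRBI3}) rather than invoking the twist condition afresh.
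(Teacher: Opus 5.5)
Your overall strategy is the paper's. By (\ref{VRR4}), the two codivision identities for $r_{H_J}$ are literally the two identities (\ref{Rhcg}) for $\rho_{H_J}$, which the paper verifies directly, so the detour through Theorem \ref{eqivRTH} is only a reformulation. Your sketch of the second identity is sound and follows the paper up to the use of (\ref{Rcodiv}) for $H$. Closing it with (\ref{VRBI2}) instead of the paper's fresh appeal to (\ref{twist-def}) also works. In element notation, write $J=J^{1}\otimes J^{2}=P^{1}\otimes P^{2}$ and $J^{-1}=X^{1}\otimes X^{2}=Y^{1}\otimes Y^{2}$. Via (\ref{VRR4}) and (\ref{ofin1}), the remaining twist factor is $J^{1}_{1}P^{1}\otimes v_{R}^{-1}\rho_{H}(P^{2})\rho_{H}(J^{1}_{2})J^{2}$. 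Then (\ref{VRBI2}), (\ref{gr10}) and $J^{-1}\ast J=\eta_{H}\otimes\eta_{H}$ reduce it to $\eta_{H}\otimes\eta_{H}$.

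The gap is in the first identity, where the ingredients you list do not suffice. You split $J$ off using multiplicativity of $\delta_{H_J}$ and antimultiplicativity of $\rho_{H_J}$, apply (\ref{VRBI1}), and cancel $v_R\ast v_R^{-1}$. The move you then need is $\delta_{H_J}(-)\cdot J^{-1}=J^{-1}\cdot\delta_{H}(-)$, i.e.\ (\ref{ofin2}) (the paper's (\ref{extra1})), not (\ref{ofin1}). This brings you to
\[
(H\otimes \mu_{H}) \circ ((\mu_{H\otimes H}\circ (J^{-1}\otimes \delta_{H}))\otimes (\mu_{H}\circ (\rho_{H}\otimes v_{R})))\circ \mu_{H\otimes H}\circ (J^{-1}\otimes \delta_{H}),
\]
i.e.\ $h\mapsto Y^{1}X^{1}_{1}h_{11}\otimes Y^{2}X^{1}_{2}h_{12}\rho_{H}(h_{2})\rho_{H}(X^{2})v_{R}$. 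The first identity of (\ref{Rhcg}) for $H$ turns this into $Y^{1}X^{1}_{1}h\otimes Y^{2}X^{1}_{2}\rho_{H}(X^{2})v_{R}$. Here (\ref{VR1-new}) cannot be applied: it needs a pattern $Y^{1}k_{1}\rho_{H}(Y^{2}k_{2})$ inside a single tensor factor, whereas the two legs of $\delta_{H}(X^{1})$ lie in different factors.

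Identity (\ref{VR1-new}) only encodes $id_{H}\ast\rho_{H}=\varepsilon_{H}\otimes\eta_{H}$ and carries no twist information. What is still missing is the twist-type identity $(H\otimes\mu_{H})\circ(H\otimes H\otimes\rho_{H})\circ(\delta_{H}\otimes H)\circ J^{-1}=(H\otimes\mu_{H})\circ(J\otimes v_{R}^{-1})$. The paper obtains the needed rearrangement from (\ref{twist-def2}), $\partial_{1}(J^{-1})\ast\partial_{3}(J^{-1})=\partial_{4}(J^{-1})\ast\partial_{2}(J^{-1})$, which rewrites the term as $X^{1}h\otimes Y^{1}X^{2}_{1}\rho_{H}(Y^{2}X^{2}_{2})v_{R}$. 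Only then do (\ref{gr10}) and (\ref{VR1-new}) yield $\varepsilon_{H}(X^{2})v_{R}^{-1}$ in the second factor, and conormality and $v_{R}^{-1}\ast v_{R}=\eta_{H}$ finish.

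Alternatively, in the spirit of your closing remark, apply $(H\otimes\mu_{H})\circ(\delta_{H}\otimes H)$ to (\ref{VRBI2}). Using the first identity of (\ref{Rhcg}) on the first leg of $J$ gives $X^{1}_{1}\otimes X^{1}_{2}\rho_{H}(X^{2})v_{R}=J$, after which $J^{-1}\ast J=\eta_{H}\otimes\eta_{H}$ concludes. Either way, this step must be added; as written, your first computation stalls.
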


\begin{proof}
Since Theorem \ref{firstmain} ensures that $H_{J}$ is a non-coassociative bimonoid, it only remains to prove that the two identities in \eqref{Rhcg} hold in order to conclude that $H_{J}$ is a right Hopf coquasigroup.  We begin with the first one:

\begin{itemize}
	\itemindent=-22pt
	\item[]$\hspace{0.38cm}	(H\otimes \mu_{H})\circ (\delta_{H_{J}}\otimes \rho_{H})\circ \delta_{H_{J}}$
	
	\item[]$= (H\otimes \mu_{H})\circ ((\mu_{H\otimes H}\circ (\delta_{H_{J}}\otimes \delta_{H_{J}}))\otimes (\mu_{H}\circ ((\mu_{H}\circ (v_{R}^{-1}\otimes (\mu_{H}\circ (\rho_{H}\otimes \rho_{H})\circ c_{H,H})))\otimes v_{R})))$
	\item[]$\hspace{0.38cm}	\circ (H\otimes c_{H,H}\otimes H)\circ ((\mu_{H\otimes H}\circ (J^{-1}\otimes \delta_{H}))\otimes J)
	$ {\footnotesize (by  \eqref{gr10} and \eqref{mu-ec} for $H_{J}$)}
	
	\item[]$= (H\otimes (\mu_{H}\circ (\mu_{H}\otimes H)))\circ  ((\mu_{H\otimes H}\circ (\delta_{H_{J}}\otimes H\otimes H))\otimes H\otimes (\mu_{H}\circ (\rho_{H}\otimes v_{R})))$
	\item[]$\hspace{0.38cm}	\circ (H\otimes ((\delta_{H_{J}}\otimes (\mu_{H}\circ (v_{R}^{-1}\otimes \rho_{H})))\circ J)\otimes H)\circ  \mu_{H\otimes H}\circ (J^{-1}\otimes \delta_{H})$ {\footnotesize (by  the naturality of $c$ and the}
	\item[]$\hspace{0.38cm}${\footnotesize associativity of $\mu_{H}$)}
	
	\item[]$= (H\otimes (\mu_{H}\circ (\mu_{H}\otimes H)))\circ  ((\mu_{H\otimes H}\circ (\delta_{H_{J}}\otimes H\otimes H))\otimes H\otimes (\mu_{H}\circ (\rho_{H}\otimes v_{R})))$
	\item[]$\hspace{0.38cm}	\circ (H\otimes ((\delta_{H_{J}}\otimes (\mu_{H}\circ ((\mu_{H}\circ (v_{R}^{-1}\otimes \rho_{H}))\otimes (v_{R}\ast v_{R}^{-1}))))\circ J)\otimes H)\circ  \mu_{H\otimes H}\circ (J^{-1}\otimes \delta_{H})$ \item[]$\hspace{0.38cm}$ {\footnotesize (by  the condition of convolution invertible of $v_{R}$ and the unit properties)}

	\item[]$= (H\otimes \mu_{H}) \circ ((\mu_{H\otimes H}\circ (\delta_{H_{J}}\otimes H\otimes H))\otimes (\mu_{H}\circ (\rho_{H}\otimes v_{R})))\circ (H\otimes H\otimes  (\mu_{H}\circ (H\otimes v_{R}^{-1}))\otimes H)$
	\item[]$\hspace{0.38cm}	\circ (H\otimes ((H\otimes \mu_{H})\circ (\delta_{H_{J}}\otimes ((\varepsilon_{H}\otimes H)\circ r_{H_{J}}))\circ J)\otimes H)\circ   \mu_{H\otimes H}\circ (J^{-1}\otimes \delta_{H})
	$ {\footnotesize (by  the associativity}
    \item[]$\hspace{0.38cm}$ {\footnotesize  of $\mu_{H}$ and \eqref{VRR3})}
	
	\item[]$= (H\otimes \mu_{H}) \circ ((\mu_{H\otimes H}\circ (\delta_{H_{J}}\otimes H\otimes H))\otimes (\mu_{H}\circ (\rho_{H}\otimes v_{R})))\circ (H\otimes H\otimes  (\mu_{H}\circ (H\otimes v_{R}^{–1}))\otimes H)$
	\item[]$\hspace{0.38cm}\circ (H\otimes ((H\otimes \mu_{H})\circ (J^{-1}\otimes v_{R}))\otimes H)\circ   \mu_{H\otimes H}\circ (J^{-1}\otimes \delta_{H})
	$ {\footnotesize (by \eqref{VRBI1})}

	\item[]$=  (H\otimes \mu_{H}) \circ (H\otimes \mu_{H}\otimes H) \circ ((\mu_{H\otimes H}\circ (\delta_{H_{J}}\otimes J^{-1}))\otimes (v_{R}\ast v_{R}^{-1})\otimes (\mu_{H}\circ (\rho_{H}\otimes v_{R})))\circ  \mu_{H\otimes H}$
	\item[]$\hspace{0.38cm}\circ (J^{-1}\otimes \delta_{H})$ {\footnotesize (by the associativity of $\mu_{H}$)}

	\item[]$=  (H\otimes \mu_{H}) \circ ((\mu_{H\otimes H}\circ (\delta_{H_{J}}\otimes J^{-1}))\otimes (\mu_{H}\circ (\rho_{H}\otimes v_{R}))) \circ  \mu_{H\otimes H}\circ (J^{-1}\otimes \delta_{H})$ {\footnotesize (by the condition}
	\item[]$\hspace{0.38cm}$ {\footnotesize  of convolution invertible of $v_{R}$ and the unit properties)}

	\item[]$=  (H\otimes \mu_{H}) \circ   ((\mu_{H\otimes H}\circ (\mu_{H\otimes H}\otimes H\otimes H)\circ ((H\otimes \mu_{H}\otimes  J\otimes J^{-1}))\otimes (\mu_{H}\circ (\rho_{H}\otimes v_{R})))$
	\item[]$\hspace{0.38cm}\circ (((\mu_{H}\otimes H\otimes H) \circ (H\otimes c_{H,H}\otimes H)\circ (J^{-1}\otimes \delta_{H}))\otimes H)\circ \mu_{H\otimes H}\circ (J^{-1}\otimes \delta_{H})$ 
	{\footnotesize (by  the  definition}
	\item[]$\hspace{0.38cm}$ {\footnotesize  of  $\delta_{H_{J}}$)}
	
	\item[]$=   (H\otimes \mu_{H}) \circ  ((\mu_{H\otimes H}\circ (J^{-1}\otimes \delta_{H}))\otimes (\mu_{H}\circ (\rho_{H}\otimes v_{R})))\circ  \mu_{H\otimes H}\circ (J^{-1}\otimes \delta_{H})$ {\footnotesize (by  \eqref{extra1})}

	\item[]$=  (H\otimes \mu_{H}) \circ ((\mu_{H\otimes H}\circ (H\otimes H\otimes \delta_{H}))\otimes  (\mu_{H}\circ ((\rho_{H}\circ\mu_{H})\otimes v_{R}))) \circ (H\otimes H\otimes c_{H,H}\otimes H)$
	\item[]$\hspace{0.38cm}\circ ((\partial_{1}(J^{-1})\ast \partial_{3}(J^{-1}))\otimes \delta_{H})$ {\footnotesize (by  \eqref{mu-ec}, the naturality of $c$, and the associativity of $\mu_{H}$)}

	\item[]$=  (H\otimes \mu_{H}) \circ ((\mu_{H\otimes H}\circ (H\otimes H\otimes \delta_{H}))\otimes  (\mu_{H}\circ ((\rho_{H}\circ \mu_{H})\otimes v_{R})))\circ (H\otimes H\otimes  c_{H,H}\otimes H)$
	\item[]$\hspace{0.38cm}\circ ((\partial_{4}(J^{-1})\ast \partial_{2}(J^{-1}))\otimes \delta_{H})$ {\footnotesize (by  \eqref{twist-def2})}

	\item[]$=  (H\otimes \mu_{H}) \circ ((\mu_{H\otimes H}\circ (H\otimes H\otimes \delta_{H}))\otimes  (\mu_{H}\circ ((\mu_{H}\circ c_{H,H}\circ (\rho_{H}\otimes \rho_{H}))\otimes v_{R})))\circ (H\otimes H\otimes  c_{H,H}\otimes H)$
	\item[]$\hspace{0.38cm}\circ ((\partial_{4}(J^{-1})\ast \partial_{2}(J^{-1}))\otimes \delta_{H})$ {\footnotesize (by  \eqref{gr10})}

	\item[]$= (H\otimes \mu_{H}) \circ (((\mu_{H\otimes H}\circ  (H\otimes H\otimes ((H\otimes \mu_{H})\circ (\delta_{H}\otimes \rho_{H})\circ \delta_{H}))))\otimes (\mu_{H}\circ (\rho_{H}\otimes v_{R})))\circ (H\otimes H\otimes c_{H,H}) $
	\item[]$\hspace{0.38cm}\circ   ((\partial_{4}(J^{-1})\ast \partial_{2}(J^{-1}))\otimes H)$ 
	{\footnotesize (by  the naturality of $c$ and the associativity of $\mu_{H}$)}
	
	\item[]$= \mu_{H\otimes H}\circ (H\otimes H\otimes H\otimes  (\mu_{H}\circ (\rho_{H}\otimes v_{R})))\circ (H\otimes H\otimes c_{H,H}) \circ   ((\partial_{4}(J^{-1})\ast \partial_{2}(J^{-1}))\otimes H)$ 
	\item[]$\hspace{0.38cm}${\footnotesize (by  \eqref{Rhcg}, the naturality of $c$, and the unit properties)}

	\item[]$=  \mu_{H\otimes H}\circ (H	\otimes (\mu_{H}\circ (H\otimes \rho_{H})\circ \mu_{H\otimes H}\circ (J^{-1}\otimes \delta_{H})) \otimes H\otimes  v_{R})\circ (J^{-1}\otimes H)$ 
	{\footnotesize (by  the naturality }
	\item[]$\hspace{0.38cm}$ {\footnotesize of $c$ and the associativity of $\mu_{H}$)}

	\item[]$=  \mu_{H\otimes H}\circ (H	\otimes v_{R}^{-1}\otimes \varepsilon_{H} \otimes v_{R})\circ (J^{-1}\otimes H)$  {\footnotesize (by  \eqref{VR1-new})}

	\item[]$= H\otimes (v_{R}^{-1}\ast v_{R})$ {\footnotesize (by the conormal condition and unit properties)}

	\item[]$=  H\otimes \eta_{H}$ {\footnotesize (by the condition of convolution invertible for $v_{R}$).}

\end{itemize}

The second identity follows by:

\begin{itemize}
		\itemindent=-22pt
	\item[]$\hspace{0.38cm}	(H\otimes \mu_{H})\circ (((H\otimes \rho_{H})\circ \delta_{H_{J}})\otimes H)\circ \delta_{H_{J}}$
	
	\item[]$=(H\otimes \mu_{H})\circ ((r_{H_{J}}\circ \mu_{H})\otimes \mu_{H}) \circ (\mu_{H}\otimes c_{H,H}\otimes \mu_{H}) \circ (H\otimes c_{H,H}\otimes c_{H,H}\otimes H)\circ (J^{-1}\otimes \delta_{H}\otimes J)$ 
	\item[]$\hspace{0.38cm}${\footnotesize (by  \eqref{VRR4})}

	\item[]$= (\mu_{H}\otimes \mu_{H}) \circ (H\otimes ((H\otimes (\mu_{H}\circ c_{H,H}))\circ (c_{H,H}\otimes H)\circ (H\otimes r_{H_{J}}))\otimes H)$
	\item[]$\hspace{0.38cm}\circ (((\mu_{H}\otimes (\mu_{H}\circ c_{H,H}))\circ (H\otimes c_{H,H}\otimes H)\circ (r_{H_{J}}\otimes r_{H_{J}}))\otimes  (\mu_{H}\circ (H\otimes \mu_{H})))\circ  (H\otimes H\otimes c_{H,H}\otimes H\otimes H)$
	\item[]$\hspace{0.38cm} \circ (H\otimes c_{H,H}\otimes c_{H,H}\otimes H)\circ (J^{-1}\otimes \delta_{H}\otimes J)$ 
	{\footnotesize (by  \eqref{VRR1})}

	\item[]$= (\mu_{H}\otimes \mu_{H}) \circ (\mu_{H}\otimes r_{H_{J}}\otimes \mu_{H}) $
	\item[]$\hspace{0.38cm}\circ (H\otimes ((H\otimes c_{H,H})\circ (H\otimes \mu_{H}\otimes H) \circ (r_{H_{J}}\otimes \mu_{H}\otimes H)\circ (c_{H,H}\otimes H\otimes H)\circ (H\otimes \delta_{H}\otimes H))\otimes H)$
	\item[]$\hspace{0.38cm} \circ (((H\otimes \mu_{H})\circ (r_{H_{J}}\otimes H)\circ J^{-1})\otimes H\otimes J)$
	{\footnotesize (by  the naturality of $c$ and the associativity of $\mu_{H}$)}
	
	\item[]$= (\mu_{H}\otimes \mu_{H}) \circ (\mu_{H}\otimes r_{H_{J}}\otimes \mu_{H}) $
	\item[]$\hspace{0.38cm}\circ (H\otimes ((H\otimes c_{H,H})\circ (H\otimes \mu_{H}\otimes H) \circ (r_{H_{J}}\otimes \mu_{H}\otimes H)\circ (c_{H,H}\otimes H\otimes H)\circ (H\otimes \delta_{H}\otimes H))\otimes H)$
	\item[]$\hspace{0.38cm} \circ (((H\otimes \varepsilon_{H}\otimes \mu_{H})\circ (H\otimes r_{H_{J}}\otimes H)\circ (\delta_{H_{J}}\otimes H)\circ J^{-1})\otimes H\otimes J)$ {\footnotesize (by  \eqref{VRR4})}

	\item[]$= (\mu_{H}\otimes \mu_{H}) \circ (\mu_{H}\otimes r_{H_{J}}\otimes \mu_{H}) $
	\item[]$\hspace{0.38cm}\circ (H\otimes ((H\otimes c_{H,H})\circ (H\otimes \mu_{H}\otimes H) \circ (r_{H_{J}}\otimes \mu_{H}\otimes H)\circ (c_{H,H}\otimes H\otimes H)\circ (H\otimes \delta_{H}\otimes H))\otimes H)$
	\item[]$\hspace{0.38cm} \circ (((H\otimes (\mu_{H}\circ (v_{R}^{-1}\otimes \rho_{H})))\circ J)\otimes H\otimes J)$ {\footnotesize (by  \eqref{VRBI3})}
	
	\item[]$=  (\mu_{H}\otimes \mu_{H}) \circ (\mu_{H}\otimes r_{H_{J}}\otimes \mu_{H}) $
	\item[]$\hspace{0.38cm}\circ (H\otimes ((H\otimes c_{H,H})\circ (H\otimes \mu_{H}\otimes H)\circ (H\otimes \mu_{H}\otimes H \otimes H )\circ (((H\otimes \mu_{H})\circ (r_{H_{J}}\otimes v_{R}^{-1}))\otimes \rho_{H}\otimes H\otimes H)$
	\item[]$\hspace{0.38cm}\circ (c_{H,H}\otimes H\otimes H))\otimes H)\circ (J\otimes \delta_{H}\otimes J)$
	{\footnotesize (by  the naturality of $c$ and the associativity of $\mu_{H}$)}
	
	\item[]$=  (\mu_{H}\otimes \mu_{H}) \circ (\mu_{H}\otimes r_{H_{J}}\otimes \mu_{H}) $
	\item[]$\hspace{0.38cm}\circ (H\otimes ((H\otimes c_{H,H})\circ (H\otimes \mu_{H}\otimes H)\circ (H\otimes \mu_{H}\otimes H \otimes H )\circ (((H\otimes (\mu_{H}\circ (v_{R}^{-1}\otimes \rho_{H})))\circ \delta_{H_{J}})$
	\item[]$\hspace{0.38cm}\otimes \rho_{H}\otimes H\otimes H)\circ (c_{H,H}\otimes H\otimes H))\otimes H)\circ (J\otimes \delta_{H}\otimes J)$
	{\footnotesize (by  \eqref{VRR5})}
	
	\item[]$=  (\mu_{H}\otimes \mu_{H}) \circ (\mu_{H}\otimes r_{H_{J}}\otimes \mu_{H}) $
	\item[]$\hspace{0.38cm}\circ (H\otimes ((H\otimes c_{H,H})\circ (H\otimes \mu_{H}\otimes H)\circ (H\otimes (\mu_{H}\circ (v_{R}^{-1}\otimes (\rho_{H}\circ\mu_{H})))\otimes H \otimes H ))$
	\item[]$\hspace{0.38cm} \circ (c_{H,H}\otimes H\otimes H\otimes H )\circ  (H\otimes \delta_{H_{J}}\otimes H\otimes H))\otimes H)  \circ (J\otimes \delta_{H}\otimes J)$ {\footnotesize (by   the naturality of $c$,   \eqref{gr10},}
	\item[]$\hspace{0.38cm}$  {\footnotesize  and the associativity of $\mu_{H}$)}

	\item[]$=  (\mu_{H}\otimes \mu_{H}) \circ (\mu_{H}\otimes ((H\otimes \mu_{H})\circ (((H\otimes \mu_{H})\circ (r_{H_{J}}\otimes v_{R}^{-1}))\otimes (\rho_{H}\circ \mu_{H}))\circ (c_{H,H}\otimes H))\otimes \mu_{H})$
	\item[]$\hspace{0.38cm}\circ (H\otimes c_{H,H}\otimes c_{H,H}\otimes H\otimes H) \circ (H\otimes H\otimes \delta_{H_{J}}\otimes c_{H,H}\otimes H)\circ (J\otimes \delta_{H}\otimes J)$ {\footnotesize (by  the naturality}
	\item[]$\hspace{0.38cm}$ {\footnotesize of $c$ and the associativity of $\mu_{H}$)}
	
	\item[]$=  (\mu_{H}\otimes \mu_{H}) \circ   (H\otimes ((((H\otimes (\mu_{H}\circ (v_{R}^{-1}\otimes \rho_{H})))\circ \delta_{H_{J}})\otimes \mu_{H})\circ (c_{H,H}\otimes H) \circ (\mu_{H}\otimes J)))$
	\item[]$\hspace{0.38cm}\circ (((H\otimes \rho_{H})\circ\mu_{H\otimes H}\circ  (J\otimes \delta_{H_{J}}))\otimes H)\circ \delta_{H}  $ 
	{\footnotesize (by  the naturality of $c$, the associativity of $\mu_{H}$, and  \eqref{VRR5})}

	\item[]$=  (\mu_{H}\otimes \mu_{H}) \circ   (H\otimes ((((H\otimes (\mu_{H}\circ (v_{R}^{-1}\otimes \rho_{H})))\circ \delta_{H_{J}})\otimes \mu_{H})\circ (c_{H,H}\otimes H) \circ (\mu_{H}\otimes J)))$
	\item[]$\hspace{0.38cm}\circ (((H\otimes \rho_{H})\circ\mu_{H\otimes H}\circ  (\delta_{H}\otimes J))\otimes H)\circ \delta_{H}  $ 
	{\footnotesize (by  \eqref{ofin1})}
	
	\item[]$=  (\mu_{H}\otimes \mu_{H}) \circ   (H\otimes ((((H\otimes (\mu_{H}\circ (v_{R}^{-1}\otimes \rho_{H})))\circ \delta_{H_{J}})\otimes \mu_{H})\circ (c_{H,H}\otimes H) \circ (\mu_{H}\otimes J)))$
	\item[]$\hspace{0.38cm}\circ (((\mu_{H}\otimes (\mu_{H}\circ (\rho_{H}\otimes \rho_{H}) \circ c_{H,H}))\circ (H\otimes c_{H,H}\otimes H) \circ  (\delta_{H}\otimes J))\otimes H)\circ \delta_{H}  $ 
	{\footnotesize (by  \eqref{gr10})}
	
	\item[]$= (\mu_{H}\otimes \mu_{H}) \circ  (H\otimes ((((H\otimes (\mu_{H}\circ (v_{R}^{-1}\otimes \rho_{H})))\circ \delta_{H_{J}})\otimes \mu_{H})\circ (c_{H,H}\otimes H)))$
	\item[]$\hspace{0.38cm}\circ (((\mu_{H}\otimes (\mu_{H}\circ ((\mu_{H}\circ (\rho_{H}\otimes \rho_{H}))\otimes H))\otimes H\otimes H)))\circ (H\otimes J\otimes H\otimes H\otimes J)\circ (\delta_{H}\otimes H)\circ \delta_{H}$
	\item[]$\hspace{0.38cm}$ {\footnotesize (by the naturality of $c$)}

	\item[]$= (\mu_{H}\otimes \mu_{H}) \circ  (H\otimes ((((H\otimes (\mu_{H}\circ (v_{R}^{-1}\otimes \rho_{H})))\circ \delta_{H_{J}})\otimes \mu_{H})\circ (c_{H,H}\otimes H)))$
	\item[]$\hspace{0.38cm}\circ (\mu_{H}\otimes (\mu_{H}\circ (\rho_{H}\otimes H))\otimes J)\circ (H\otimes J\otimes H)\circ 
	(H\otimes \mu_{H})\circ (((H\otimes \rho_{H})\circ \delta_{H})\otimes H)\circ \delta_{H}
	$ {\footnotesize (by the}
	\item[]$\hspace{0.38cm}$ {\footnotesize  associativity of $\mu_{H}$)}
	
	\item[]$= (\mu_{H}\otimes \mu_{H})$ 
	\item[]$\hspace{0.38cm}\circ 
	(H\otimes 
	((\mu_{H}\otimes \mu_{H})\circ (H\otimes ((H\otimes v_{R}^{-1}\otimes (\mu_{H}\circ (\rho_{H}\otimes H)))
	\circ (\delta_{H_{J}}\otimes H)\circ c_{H,H}\circ (\rho_{H}\otimes H))
	\otimes H) $
	\item[]$\hspace{0.38cm}\circ (H\otimes J\otimes J) $ 
   {\footnotesize (by  \eqref{Rhcg}, the unit and the counit properties, and the associativity of $\mu_{H}$)}
	
   \item[]$= (\mu_{H}\otimes \mu_{H})$ 
   \item[]$\hspace{0.38cm}\circ 
   (H\otimes  
   ((\mu_{H}\otimes \mu_{H})\circ (H\otimes ((H\otimes v_{R}^{-1}\otimes (\mu_{H}\circ (\rho_{H}\otimes \rho_{H})\circ c_{H,H}))\circ (c_{H,H}\otimes H)\circ (H\otimes \delta_{H_{J}}))))
   \otimes H)$
    \item[]$\hspace{0.38cm}\circ (H\otimes J\otimes J)$
    {\footnotesize (by  the naturality of $c$)}

	\item[]$=(\mu_{H}\otimes (\mu_{H}\circ ((\mu_{H}\circ (v_{R}^{-1}\otimes \rho_{H}) ) \otimes H)))\circ 
	(H\otimes (\mu_{H\otimes H}\circ (J\otimes \delta_{H_{J}}))\otimes H)\circ (H\otimes J) $ 
	{\footnotesize (by  \eqref{gr10})}
	
	\item[]$=(\mu_{H}\otimes (\mu_{H}\circ ((\mu_{H}\circ (v_{R}^{-1}\otimes \rho_{H}) ) \otimes H)))\circ 
	(H\otimes (\partial_{3}(J)\ast \partial_{1}(J))) $ 
	 {\footnotesize (by  \eqref{ofin1} and \eqref{twist-def1n})}

    \item[]$=(\mu_{H}\otimes (\mu_{H}\circ ((\mu_{H}\circ (v_{R}^{-1}\otimes \rho_{H}) ) \otimes H)))\circ 
   (H\otimes (\partial_{2}(J)\ast \partial_{4}(J))) $ 
   {\footnotesize (by  \eqref{twist-def})}

   \item[]$=(\mu_{H}\otimes (\mu_{H}\circ ((\mu_{H}\circ(v_{R}^{-1}\otimes (\mu_{H}\circ (\rho_{H}\otimes \rho_{H})\circ c_{H,H}))\otimes \mu_{H})\circ (H\otimes c_{H,H}\otimes H)\circ (\delta_{H}\otimes J)))$
   \item[]$\hspace{0.38cm}\circ (H\otimes J)$ 
   {\footnotesize (by  \eqref{twist-def1n} and \eqref{gr10})}
   
   \item[]$=(\mu_{H}\otimes (\mu_{H}\circ (\mu_{H}\otimes \mu_{H})\circ (v_{R}^{-1}\otimes H\otimes (\rho_{H}\ast id_{H})\otimes H)\circ ( c_{H,H}\otimes H)\circ (H\otimes \rho_{H}\otimes H)))\circ (H\otimes J\otimes J)$
   \item[]$\hspace{0.38cm}$ 
   {\footnotesize (by  the naturality of $c$ and the associativity of $\mu_{H}$)}
	
		\item[]$= H\otimes (v_{R}^{-1}\ast v_{R}) $ {\footnotesize (by  \eqref{Rhcg1}, the conormal condition for $J$, and the unit properties)}
	
	\item[]$= H\otimes \eta_{H} $ {\footnotesize (by  the convolution invertible condition for $v_{R}$).}
	
\end{itemize}
	
\end{proof}

The left version of the previous theorem is: 

\begin{theorem}
	\label{mailL3} Let 	$H$ be a left Hopf coquasigroup with left antipode $\lambda_{H}$. Let $J$ be a twist and  $H_{J}$ the non-coassociative bimonoid introduced in Theorem \ref{firstmain}. Then, $H_{J}$ is a left Hopf coquasigroup with left  antipode 
	$$\lambda_{H_{J}}=\mu_{H}\circ (v_{L}\otimes (\mu_{H}\circ (\lambda_{H}\otimes v_{L}^{-1}))),$$
	where $v_{L}$ and $v_{L}^{-1}$ are the  morphisms defined in Lemma \ref{VL}.
\end{theorem}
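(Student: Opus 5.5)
The plan is to mirror the proof of Theorem \ref{mailR3}, exchanging the roles of left and right throughout.

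\textbf{Setting up the hypotheses.} Since $H$ is a left Hopf coquasigroup, the left version of Theorem \ref{eqivRTH} provides a left codivision $l_{H}=(\lambda_{H}\otimes H)\circ\delta_{H}$ with $\lambda_{H}=(H\otimes\varepsilon_{H})\circ l_{H}$. The remark following \eqref{Lhcg} gives $\lambda_{H}\ast id_{H}=\varepsilon_{H}\otimes\eta_{H}$. Hence Lemmas \ref{VL}, \ref{VLL} and \ref{VLBI} all apply. In particular:
\begin{itemize}
\item[(a)] $v_{L}$ is convolution invertible;
\item[(b)] $\lambda_{H_{J}}:=\mu_{H}\circ(v_{L}\otimes(\mu_{H}\circ(\lambda_{H}\otimes v_{L}^{-1})))$ equals $(H\otimes\varepsilon_{H})\circ l_{H_{J}}$;
\item[(c)] $l_{H_{J}}=(\lambda_{H_{J}}\otimes H)\circ\delta_{H_{J}}$;
\item[(d)] $l_{H_{J}}$ is multiplicative in the sense of Lemma \ref{VLL}.
\end{itemize}
We may also use the antimultiplicativity of $\lambda_{H}$ (Remark, part (iii)) and $\lambda_{H}\circ\eta_{H}=\eta_{H}$. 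By Theorem \ref{firstmain}, $H_{J}$ is a non-coassociative bimonoid, so it only remains to verify the two equalities in \eqref{Lhcg} for $(\delta_{H_{J}},\lambda_{H_{J}})$.

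\textbf{First identity: $(\mu_{H}\otimes H)\circ(\lambda_{H_{J}}\otimes\delta_{H_{J}})\circ\delta_{H_{J}}=\eta_{H}\otimes H$.*} This is the mirror of the first computation in Theorem \ref{mailR3}. The steps are:
\begin{itemize}
\item[1.] Expand the outer $\delta_{H_{J}}$ as $\mu_{H\otimes H}\circ(\mu_{H\otimes H}\circ(J^{-1}\otimes\delta_{H})\otimes J)$, using \eqref{mu-ec} for $H_{J}$ and antimultiplicativity of $\lambda_{H}$.
\item[2.] Insert $v_{L}\ast v_{L}^{-1}$ and recognise the block $(\mu_{H}\otimes H)\circ(((H\otimes\varepsilon_{H})\circ l_{H_{J}})\otimes\delta_{H_{J}})\circ J^{-1}$.
\item[3.] Replace that block by $(\mu_{H}\otimes H)\circ(v_{L}\otimes J)$ using \eqref{VLBI1}, then cancel $v_{L}$ against $v_{L}^{-1}$.
\item[4.] Remove the resulting $J\otimes J^{-1}$ pair with the left analogue of \eqref{extra1}, namely $\mu_{H\otimes H}\circ(J\ast J^{-1}\otimes-)$.
\item[5.] Apply the twist identity \eqref{twist-def1n} (the form $\partial_{3}(J)\ast\partial_{1}(J)=\partial_{2}(J)\ast\partial_{4}(J)$, which is the left counterpart of the use of \eqref{twist-def2} on the right) to move the two copies of $J$ so that $\lambda_{H}$ faces $(\lambda_{H}\otimes\delta_{H})\circ\delta_{H}$.
\item[6.] Collapse that expression with \eqref{Lhcg} for $H$.
\item[7.] Use the $J$-version of \eqref{VR1-new} from Lemma \ref{VL}, $\mu_{H}\circ(\lambda_{H}\otimes H)\circ\mu_{H\otimes H}\circ(\delta_{H}\otimes J)=\varepsilon_{H}\otimes v_{L}^{-1}$. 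What remains is $(v_{L}\ast v_{L}^{-1})\otimes H=\eta_{H}\otimes H$.
\end{itemize}

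\textbf{Second identity: $(\mu_{H}\otimes H)\circ(H\otimes((\lambda_{H_{J}}\otimes H)\circ\delta_{H_{J}}))\circ\delta_{H_{J}}=\eta_{H}\otimes H$.} Follow the second half of the proof of Theorem \ref{mailR3}, using the left-hand lemmas in place of the right-hand ones:
\begin{itemize}
\item[1.] Rewrite $(\lambda_{H_{J}}\otimes H)\circ\delta_{H_{J}}$ as $l_{H_{J}}$, and use the multiplicativity of $l_{H_{J}}$ to split it over the product in $\delta_{H_{J}}$.
\item[2.] Use the left version of \eqref{VRR4} together with \eqref{VRLI3} to absorb the factor coming from $J$. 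This produces $(\mu_{H}\circ(\lambda_{H}\otimes v_{L}^{-1}))\otimes H$ applied to $J^{-1}$.
\item[3.] Use the last identity of Lemma \ref{VLL} (the analogue of \eqref{VRR5}) and \eqref{ofin2} to convert $\mu_{H\otimes H}\circ(\delta_{H_{J}}\otimes J^{-1})$ into $\mu_{H\otimes H}\circ(J^{-1}\otimes\delta_{H})$.
\item[4.] Apply the second equality of \eqref{Lhcg} for $H$.
\item[5.] Apply the twist condition in the form \eqref{twist-def2n}, i.e. $\partial_{4}(J^{-1})\ast\partial_{2}(J^{-1})=\partial_{1}(J^{-1})\ast\partial_{3}(J^{-1})$.
\item[6.] Finish with $\lambda_{H}\ast id_{H}=\varepsilon_{H}\otimes\eta_{H}$ and conormality, arriving at $(v_{L}\ast v_{L}^{-1})\otimes H$.
\end{itemize}

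\textbf{Main obstacle.} The hardest part is this second identity, as in Theorem \ref{mailR3}. One has to choose which of the four equivalent twist equations (\eqref{twist-def}--\eqref{twist-def3}) is the correct mirror at each step, and the symmetry $c$ appears in reversed positions, so the bookkeeping is delicate. To settle each choice, I will first write the right-hand computation with every morphism replaced by its opposite (reversing tensor factors, swapping $J$ and $J^{-1}$ where the right-hand proof used $J^{-1}\otimes\delta_{H}$, and so on) and check each step against the left lemmas before committing.
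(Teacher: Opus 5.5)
Your proposal is correct and is the approach the paper intends: the paper gives no separate proof of Theorem \ref{mailL3} and treats it as the mirror of Theorem \ref{mailR3}. Your choice of mirrored ingredients is right, in particular \eqref{VLBI1} with \eqref{twist-def} for the first identity, and \eqref{VRLI3}, \eqref{ofin2} with \eqref{twist-def2n} for the second, since the left--right mirror exchanges \eqref{twist-def} and \eqref{twist-def2}. The only slip is in the last step of the second identity: the convolution identity actually used there is $id_{H}\ast\lambda_{H}=\varepsilon_{H}\otimes\eta_{H}$ (the mirror of $\rho_{H}\ast id_{H}$), not $\lambda_{H}\ast id_{H}$, but both hold for a left Hopf coquasigroup by the Remark following \eqref{Lhcg}, so the argument goes through.
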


The, as a consequence of Theorems \ref{mailR3} and \ref{mailL3} we obtain the following corollary:

\begin{Corollary}
	\label{mailRL3} Let $H$ be a Hopf coquasigroup with antipode $S_{H}$. Let $J$ be a twist and  $H_{J}$ the non-coassociative bimonoid introduced in Theorem \ref{firstmain}. Then, $H_{J}$ is a  Hopf coquasigroup.
\end{Corollary}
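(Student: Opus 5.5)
The plan is to apply Theorems \ref{mailR3} and \ref{mailL3} simultaneously to $H$ and then check that the right antipode and the left antipode they produce for $H_{J}$ are the same morphism. A Hopf coquasigroup $H$ with antipode $S_{H}$ is both a right Hopf coquasigroup with right antipode $\rho_{H}=S_{H}$ and a left Hopf coquasigroup with left antipode $\lambda_{H}=S_{H}$. Hence Theorem \ref{mailR3} shows that $H_{J}$ is a right Hopf coquasigroup with right antipode $\rho_{H_{J}}=\mu_{H}\circ((\mu_{H}\circ(v_{R}^{-1}\otimes S_{H}))\otimes v_{R})$. Likewise, Theorem \ref{mailL3} shows that $H_{J}$ is a left Hopf coquasigroup with left antipode $\lambda_{H_{J}}=\mu_{H}\circ(v_{L}\otimes(\mu_{H}\circ(S_{H}\otimes v_{L}^{-1})))$.

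First I will confirm that the hypotheses of the auxiliary lemmas hold, so that $v_{R}$ and $v_{L}$ are well defined and convolution invertible.
\begin{itemize}
\item[(a)] By Theorem \ref{eqivRTH}, $r_{H}=(H\otimes S_{H})\circ\delta_{H}$ is a right codivision. Then $(\varepsilon_{H}\otimes H)\circ r_{H}=S_{H}$ by the counit property, so the morphism called $\rho_{H}$ in Lemma \ref{VR} is exactly $S_{H}$.
\item[(b)] By the left analogue of Theorem \ref{eqivRTH}, $l_{H}=(S_{H}\otimes H)\circ\delta_{H}$ is a left codivision with $(H\otimes\varepsilon_{H})\circ l_{H}=S_{H}$.
\item[(c)] The identities $id_{H}\ast S_{H}=\varepsilon_{H}\otimes\eta_{H}=S_{H}\ast id_{H}$ follow from (\ref{Rhcg1}) and its left counterpart.
\end{itemize}
With these in hand, Lemmas \ref{VR} and \ref{VL} apply with $\rho_{H}=\lambda_{H}=S_{H}$.

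The key observation is that the two families of scalars coincide. From the definitions in Lemmas \ref{VR} and \ref{VL} we get
$$v_{L}^{-1}=\mu_{H}\circ(S_{H}\otimes H)\circ J=v_{R}, \qquad v_{L}=\mu_{H}\circ(H\otimes S_{H})\circ J^{-1}=v_{R}^{-1}.$$
Substituting these into $\lambda_{H_{J}}$ and using the associativity of $\mu_{H}$ gives
$$\lambda_{H_{J}}=\mu_{H}\circ(v_{R}^{-1}\otimes(\mu_{H}\circ(S_{H}\otimes v_{R})))=\mu_{H}\circ((\mu_{H}\circ(v_{R}^{-1}\otimes S_{H}))\otimes v_{R})=\rho_{H_{J}}.$$
So $S_{H_{J}}:=\rho_{H_{J}}=\lambda_{H_{J}}$ satisfies both (\ref{Rhcg}) and (\ref{Lhcg}) for $H_{J}$, and $H_{J}$ is a Hopf coquasigroup.

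The main obstacle is the single point where one might otherwise need a separate argument: showing that the right and left antipodes agree. Uniqueness of convolution inverses is not available here, because convolution is not associative without coassociativity. The observation $v_{L}=v_{R}^{-1}$ avoids this issue entirely, so the only care required is keeping track of the identifications $\rho_{H}=\lambda_{H}=S_{H}$ made in (a) and (b).
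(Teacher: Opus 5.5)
Your proof is correct and is essentially the paper's own argument. Like the paper, you identify $\rho_{H}=\lambda_{H}=S_{H}$, read off from Lemmas \ref{VR} and \ref{VL} that $v_{R}=v_{L}^{-1}$ and $v_{R}^{-1}=v_{L}$, and conclude $\rho_{H_{J}}=\lambda_{H_{J}}$ by associativity of $\mu_{H}$.

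One small correction to your closing remark: the coincidence is actually automatic. Apply $\mu_{H}\circ(\lambda_{H_{J}}\otimes H)$ to the first identity in \eqref{Rhcg} for $H_{J}$, and use associativity of $\mu_{H}$ together with $\mu_{H}\circ(\lambda_{H_{J}}\otimes H)\circ\delta_{H_{J}}=\varepsilon_{H}\otimes\eta_{H}$; this yields $\lambda_{H_{J}}=\rho_{H_{J}}$. So your explicit identification is a convenient shortcut, not the only way around a genuine obstacle.
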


\begin{proof} The proof follows directly from Theorems \ref{mailR3} and \ref{mailL3}. Note that in  this case 
	$\rho_{H}=\lambda_{H}=S_{H}$ because a Hopf coquasigroup with antipode $S_{H}$ is a right Hopf coquasigroup with antipode $\rho_{H}=S_{H}$ and a left Hopf coquasigroup with antipode  $\lambda_{H}=S_{H}$.  As a consequence, $v_{R}=v_{L}^{-1}$ and then
	$$S_{H_{J}}=\rho_{H_{J}}=\lambda_{H_{J}}.$$
\end{proof}

\subsection{{\sc  Examples}} 

In this subsection we will work in the category of vector spaces over a field ${\mathbb K}$ as in Example \ref{ex1}. We now apply the general deformation procedure  to the Hopf coquasigroup $D={\mathbb K}[{\sf S}^7]\rtimes {\mathbb K}G$ 
constructed in the quoted example. As mentioned in the introduction, this example was presented in \cite{KM} as the first quantum Hopf coquasigroup and was conjectured to be the first of a broader family of examples. The following construction shows that twist deformation naturally generates such additional examples.

\begin{Lemma}
\label{bullp}
In the conditions of Example \ref{ex1}, for $r\in G$,  and 
$$p_{0}=\frac{1}{2}(1_{D}+T_{1}^r), \;\; p_{1}=\frac{1}{2}(1_{D}-T_{1}^r),$$
we have the following:
\begin{itemize}
\item[(i)] $T_{1}^r\bullet T_{1}^r=1_{D}$.
\item[(ii)] $p_{0}+p_{1}=1_{D}$.
\item[(iii)] $p_{0}\bullet T_{1}^r=T_{1}^r\bullet p_{0}=p_{0}$. 
\item[(iv)] $p_{1}\bullet T_{1}^r=T_{1}^r\bullet p_{1}=-p_{1}$. 
\item[(v)] $p_{0}\bullet p_{0}=p_{0}$, $p_{1}\bullet p_{1}=p_{1}$, $p_{0}\bullet p_{1}=p_{1}\bullet p_{0}=0$
\item[(vi)] $p_{0}\bullet T_{c}^u=\displaystyle\frac{1}{2}(T_{c}^u+\chi(c,r)T_{c}^{u+r})$, $ T_{c}^u\bullet p_{0}=\displaystyle\frac{1}{2}(T_{c}^u+T_{c}^{u+r}).$
\item[(vii)] $p_{1}\bullet T_{c}^u=\displaystyle\frac{1}{2}(T_{c}^u-\chi(c,r)T_{c}^{u+r})$, $ T_{c}^u\bullet p_{1}=\displaystyle\frac{1}{2}(T_{c}^u-T_{c}^{u+r}).$
\item[(viii)] $$p_{0}\bullet T_{c}^u\bullet p_{0}=\frac{1+\chi(c,r)}{4}(T_{c}^u+T_{c}^{u+r})=\left\{ \begin{array}{l}
\displaystyle\frac{1}{2}(T_{c}^u+T_{c}^{u+r}) \;\; {\rm if} \;\; rc=0,\\
	\\
	0 \;\; {\rm if} \;\; rc=1,
\end{array}\right.$$
$$p_{1}\bullet T_{c}^u\bullet p_{0}=\frac{1-\chi(c,r)}{4}(T_{c}^u+T_{c}^{u+r})=\left\{ \begin{array}{l}
	 0\;\; {\rm if} \;\; rc=0,\\
	\\
	\displaystyle\frac{1}{2}(T_{c}^u+T_{c}^{u+r}) \;\; {\rm if} \;\; rc=1,
\end{array}\right.$$
$$p_{0}\bullet T_{c}^u\bullet p_{1}=\frac{1-\chi(c,r)}{4}(T_{c}^u-T_{c}^{u+r})=\left\{ \begin{array}{l}
	0\;\; {\rm if} \;\; rc=0,\\
	\\
	\displaystyle\frac{1}{2}(T_{c}^u-T_{c}^{u+r}) \;\; {\rm if} \;\; rc=1,
\end{array}\right.$$
$$p_{1}\bullet T_{c}^u\bullet p_{1}=\frac{1+\chi(c,r)}{4}(T_{c}^u-T_{c}^{u+r})=\left\{ \begin{array}{l}
	\displaystyle\frac{1}{2}(T_{c}^u-T_{c}^{u+r})\;\; {\rm if} \;\; rc=0,\\
	\\
	0 \;\; {\rm if} \;\; rc=1.
\end{array}\right.$$
\item[(ix)] $\varepsilon_{D}(p_{0})=1$, $\varepsilon_{D}(p_{1})=0.$
\item[(x)] $\delta_{D}(p_{0})=p_{0}\otimes p_{0}+p_{1}\otimes p_{1}$, $\delta_{D}(p_{1})=p_{0}\otimes p_{1}+p_{1}\otimes p_{0}$.
\end{itemize}
\end{Lemma}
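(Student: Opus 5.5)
The whole lemma will be a direct computation from the multiplication rules \eqref{muD1}--\eqref{muD4}, the counit \eqref{varepD} and the coproduct \eqref{copD1}. No general theory is needed. The key structural facts are that $G={\mathbb Z}_2^3$ has exponent $2$, so $r+r=0$, and that $\chi(c,r)=(-1)^{rc}\in\{1,-1\}$, so $\chi(c,r)^2=1$. We also use that $\frac12$ makes sense, i.e. the characteristic of ${\mathbb K}$ is not $2$, which is implicit in the definition of $p_0,p_1$. The plan is to prove the items in order, each one reusing the previous ones.

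Items (i)--(v) are the group-algebra part.
\begin{itemize}
\item[(i)] By \eqref{muD1}, $T_1^r\bullet T_1^r=T_1^{2r}=T_1^0=1_D$ by \eqref{etaD}.
\item[(ii)] This is immediate from the definitions of $p_0$ and $p_1$.
\item[(iii), (iv)] Expand by bilinearity and use (i); for example, $p_0\bullet T_1^r=\frac12(T_1^r+1_D)=p_0$. The computations on the other side and for $p_1$ are the same, the latter giving $-p_1$.
\item[(v)] Write $p_0\bullet p_0=\frac12(p_0+p_0\bullet T_1^r)$ and apply (iii) to get $p_0$. Similarly $p_1\bullet p_1=\frac12(p_1+p_1)=p_1$ by (iv). For the mixed products, $p_0\bullet p_1=\frac12(p_0-p_0\bullet T_1^r)=0$, and likewise $p_1\bullet p_0=0$.
\end{itemize}

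Items (vi)--(viii) involve the $T_c^u$.
\begin{itemize}
\item[(vi), (vii)] By \eqref{muD2}, $T_1^r\bullet T_c^u=\chi(c,r)T_c^{u+r}$, and by \eqref{muD3}, $T_c^u\bullet T_1^r=T_c^{u+r}$. Together with unitality of $1_D$ this gives both formulas by linearity.
\item[(viii)] Compose (vi)/(vii) on the left with (vi)/(vii) on the right. For instance,
$$p_0\bullet T_c^u\bullet p_0=\tfrac12\big(T_c^u\bullet p_0+\chi(c,r)\,T_c^{u+r}\bullet p_0\big).$$
Since $u+2r=u$, we have $T_c^{u+r}\bullet p_0=\frac12(T_c^{u+r}+T_c^u)$, so the whole expression equals $\frac{1+\chi(c,r)}{4}(T_c^u+T_c^{u+r})$. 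The other three products are handled the same way; the signs come from (vii) and from $T_c^{u+r}\bullet p_1=-\frac12(T_c^u-T_c^{u+r})$. The case split follows from $\chi(c,r)=1$ when $rc=0$ and $\chi(c,r)=-1$ when $rc=1$.
\end{itemize}
Associativity of $\bullet$ is available, since $D$ is a monoid, so the triple products are unambiguous.

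Items (ix) and (x) use the coalgebra structure.
\begin{itemize}
\item[(ix)] By \eqref{varepD}, $\varepsilon_D(T_1^u)=1$, so $\varepsilon_D(p_0)=1$ and $\varepsilon_D(p_1)=0$.
\item[(x)] This is the step needing the most care, though it is still routine. By \eqref{copD1} and \eqref{etaD}, $\delta_D(p_0)=\frac12(T_1^0\otimes T_1^0+T_1^r\otimes T_1^r)$. Expand the right-hand side:
$$p_0\otimes p_0+p_1\otimes p_1=\tfrac14\big[(1+T_1^r)\otimes(1+T_1^r)+(1-T_1^r)\otimes(1-T_1^r)\big]=\tfrac12\big(1\otimes1+T_1^r\otimes T_1^r\big).$$
The cross terms cancel, so the two sides agree.

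Similarly, $p_0\otimes p_1+p_1\otimes p_0=\frac12(1\otimes1-T_1^r\otimes T_1^r)=\delta_D(p_1)$.
\end{itemize}

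No step presents a real obstacle. The only points where attention is needed are the sign bookkeeping in (viii) and the cancellation of the cross terms in (x).
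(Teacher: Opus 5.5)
Your proposal is correct and follows essentially the same route as the paper: (i)--(v) from \eqref{muD1} and $r+r=0$, (vi)--(vii) from \eqref{muD2}--\eqref{muD3} together with unitality, (viii) by combining (vi) and (vii) using associativity, and (ix)--(x) from \eqref{varepD} and \eqref{copD1}. You simply write out the explicit computations that the paper's one-paragraph proof leaves to the reader, and you correctly note that the characteristic of ${\mathbb K}$ must not be $2$ for $p_0,p_1$ to be defined.
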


\begin{proof}
In this lemma (i) follows by (\ref{muD1}) and (ii) from the definition of $p_{0}$ and $p_{1}$. The equalities in (iii) and (iv) follows by (i), and (v) is a consequence of (iii) and (iv). The proof for (vi) and (vii) follows from (\ref{muD2})  and (\ref{muD3}). Using (vi) and (vii)  we prove (viii). Finally, (ix) follows by (\ref{varepD}) and (x), by (\ref{copD1}).
\end{proof}

\begin{theorem} 
\label{mainex}
In the conditions of Lemma \ref{bullp}, 
$$J_{r}=p_{0}\otimes 1_{D}+p_{1}\otimes T_{1}^{r}$$
is a twist for $D={\mathbb K}[{\sf S}^7]\rtimes {\mathbb K}G$ such that $J_{r}^{-1}=J_{r}$.
\end{theorem}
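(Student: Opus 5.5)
The plan is to exploit the fact that $J_{r}$ lives entirely in the two-dimensional subalgebra of $D$ spanned by $1_{D}$ and $T_{1}^{r}$. Since $T_{1}^{r}\bullet T_{1}^{r}=1_{D}$ (Lemma \ref{bullp}(i)), this subalgebra is commutative. It is also spanned by the orthogonal idempotents $p_{0},p_{1}$, so every computation reduces to bookkeeping with $\mathbb{Z}_{2}$-indices. Since $J_{r}:{\mathbb K}\to D\otimes D$, convolution here is just the product of $D\otimes D$. In ${\mathbb K}$-{\sf Vect} that product is componentwise multiplication: $(x\otimes y)(x'\otimes y')=(x\bullet x')\otimes(y\bullet y')$.

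\emph{First step: $J_{r}^{-1}=J_{r}$.} Multiply $J_{r}$ by itself using Lemma \ref{bullp}(i),(v). The cross terms contain $p_{0}\bullet p_{1}=p_{1}\bullet p_{0}=0$ and vanish. The remaining terms give $p_{0}\otimes 1_{D}+p_{1}\otimes (T_{1}^{r}\bullet T_{1}^{r})=(p_{0}+p_{1})\otimes 1_{D}=1_{D}\otimes 1_{D}$ by (ii). Hence $J_{r}$ is convolution invertible and is its own inverse. As a side remark, Lemma \ref{bullp}(ix) gives $(\varepsilon_{D}\otimes D)(J_{r})=1_{D}$ and $(D\otimes\varepsilon_{D})(J_{r})=p_{0}+p_{1}=1_{D}$. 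So $J_{r}$ is conormal, which is the standing assumption used in the general theory.

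\emph{Second step: rewrite $J_{r}$ in the idempotent basis.} By (iii),(iv), $T_{1}^{r}=p_{0}-p_{1}$, that is, $(T_{1}^{r})^{j}=\sum_{k}(-1)^{jk}p_{k}$ for $j\in\mathbb{Z}_{2}$ (with $(T_{1}^{r})^{0}=1_{D}=p_{0}+p_{1}$). Therefore
\[
J_{r}=\sum_{a,b\in\mathbb{Z}_{2}}(-1)^{ab}\,p_{a}\otimes p_{b}.
\]
Lemma \ref{bullp}(x) says precisely that $\delta_{D}(p_{i})=\sum_{a+b=i}p_{a}\otimes p_{b}$.

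\emph{Third step: verify the twist identity \eqref{twist-def}.} I will verify it in the form \eqref{twist-def1n}, i.e.\ $\partial_{3}(J_{r})\ast\partial_{1}(J_{r})=\partial_{2}(J_{r})\ast\partial_{4}(J_{r})$. For the left side, $(\delta_{D}\otimes D)(J_{r})=\sum_{a,b,c}(-1)^{(a+b)c}p_{a}\otimes p_{b}\otimes p_{c}$. Multiplying by $J_{r}\otimes 1_{D}$ and using orthogonality of the idempotents gives
\[
\sum_{a,b,c}(-1)^{(a+b)c+ab}\,p_{a}\otimes p_{b}\otimes p_{c}.
\]
For the right side, $(D\otimes\delta_{D})(J_{r})$ multiplied by $1_{D}\otimes J_{r}$ gives
\[
\sum_{a,b,c}(-1)^{a(b+c)+bc}\,p_{a}\otimes p_{b}\otimes p_{c}.
\]
The exponents $ac+bc+ab$ agree, so the two sides are equal and $J_{r}$ is a twist.

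The only delicate point is the orthogonality bookkeeping in the third step. One must ensure that products in $D\otimes D\otimes D$ of the $p$'s collapse correctly, using (v) and the commutativity of the subalgebra generated by $T_{1}^{r}$. Notably, none of the non-commutative relations (vi)--(viii) involving $T_{c}^{u}$ are needed. Everything else is a direct application of Lemma \ref{bullp}.
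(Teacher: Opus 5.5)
Your proposal is correct and follows essentially the same route as the paper. Both arguments are a direct expansion using only parts (i)--(v), (ix) and (x) of Lemma \ref{bullp}, together with the self-inverse computation $J_{r}\ast J_{r}=(p_{0}+p_{1})\otimes 1_{D}$ and the conormality check. The only difference is organisational: the paper multiplies out in the mixed form $p_{0}\otimes 1_{D}+p_{1}\otimes T_{1}^{r}$ and shows both sides equal $p_{0}\otimes p_{0}\otimes 1_{D}-p_{1}\otimes p_{1}\otimes 1_{D}+p_{0}\otimes p_{1}\otimes T_{1}^{r}+p_{1}\otimes p_{0}\otimes T_{1}^{r}$, whereas you write $J_{r}=\sum_{a,b}(-1)^{ab}p_{a}\otimes p_{b}$ entirely in the idempotent basis, which reduces the twist identity to the single exponent identity $(a+b)c+ab=a(b+c)+bc$.
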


\begin{proof}
We first verify the fact that $J_{r}$ is conormal. By  (ix) of Lemma  \ref{bullp} and \eqref{varepD}, 
	\[
	(\varepsilon_D\otimes id_D)(J_r)
	=
	1_D
	=
	(id_D\otimes\varepsilon_D)(J_r).
	\]

It remains to prove the twist identity 
	\begin{equation}
		\label{twistex}
		(\delta_D\otimes id_D)(J_r)\diamond(J_r\otimes1_D)
		=
		(id_D\otimes\delta_D)(J_r)\diamond(1_D\otimes J_r),
	\end{equation}
where $\diamond$ denotes the product on $D\otimes D\otimes D$.
	
To establish \eqref{twistex}, we begin by computing the two terms appearing in the identity.
	
Using part (x) of Lemma \ref{bullp}, we obtain
	\[
	(\delta_D\otimes id_D)(J_r)
	=
	p_0\otimes p_0\otimes1_D
	+
	p_1\otimes p_1\otimes1_D
	+
	p_0\otimes p_1\otimes T_1^r
	+
	p_1\otimes p_0\otimes T_1^r,
	\]
whereas \eqref{copD1} together with the unit properties gives
	\[
	(id_D\otimes\delta_D)(J_r)
	=
	p_0\otimes1_D\otimes1_D
	+
	p_1\otimes T_1^r\otimes T_1^r.
	\]
	
Now, applying parts (iii)--(v) of Lemma  \ref{bullp}, a direct computation yields
\begin{align*}
		(\delta_D\otimes id_D)(J_r)
		\diamond
		(J_r\otimes1_D)
		&=
		(p_0\otimes p_0\otimes1_D
		+p_1\otimes p_1\otimes1_D
		\\
		&
		+p_0\otimes p_1\otimes T_1^r
		+p_1\otimes p_0\otimes T_1^r)
		\\
		&\diamond
		(p_0\otimes1_D\otimes1_D
		+p_1\otimes T_1^r\otimes1_D)
		\\
		&=
		p_0\otimes p_0\otimes1_D
		-
		p_1\otimes p_1\otimes1_D
		\\
		&
		+p_0\otimes p_1\otimes T_1^r
		+p_1\otimes p_0\otimes T_1^r.
\end{align*}
	
On the other hand,
\begin{align*}
		(id_D\otimes\delta_D)(J_r)
		\diamond
		(1_D\otimes J_r)
		&=
		(p_0\otimes1_D\otimes1_D
		+p_1\otimes T_1^r\otimes T_1^r)
		\\
		&\diamond
		(1_D\otimes p_0\otimes1_D
		+1_D\otimes p_1\otimes T_1^r)
		\\
		&=
		p_0\otimes p_0\otimes1_D
		-
		p_1\otimes p_1\otimes1_D
		\\
		&
		+p_0\otimes p_1\otimes T_1^r
		+p_1\otimes p_0\otimes T_1^r.
\end{align*}
	
Hence both sides coincide, proving that $J_r$ satisfies the twist equation.
	
To conclude, we prove that $J_r$ is self-inverse. By parts (i), (ii), and (v) of Lemma \ref{bullp},
\[
	J_r\ast J_r
	=
	p_0\otimes1_D
	+
	p_1\otimes1_D
	=
	(p_0+p_1)\otimes1_D
	=
	1_D\otimes1_D.
\]
Therefore,
\[
	J_r^{-1}=J_r.
\]
	
This completes the proof.
\end{proof}

Consequently, by   Theorem \ref{mainex} and the general theory of the previous sections we have a twisted deformation of $D$, denoted by $D_{J_{r}}$, where 
$$1_{D_{J_{r}}}=1_{D},\;\; \mu_{D_{J_{r}}}=\mu_{D}, \;\; \varepsilon_{D_{J_{r}}}=\varepsilon_{D}$$
and 
$$ \delta_{D_{J_{r}}}=\mu_{D\otimes D\otimes D}\circ (J_{r}\otimes \delta_{D}\otimes J_{r}), \;\; S_{D_{J_{r}}}=v_{J_{r}}^{-1}\bullet S_{D}\bullet v_{J}$$
for $v_{J_{r}}=T_{1}^r=v_{J_{r}}^{-1}$.

By (viii) of Lemma \ref{bullp}, we obtain an  explicit formulation of the coproduct $ \delta_{D_{J_{r}}}$:
\begin{align*}
	\;& \delta_{D_{J_{r}}}(T_{c}^u) \nonumber \\
	\; &= \sum_{a+b=c} F(a,b)[  p_{0}\bullet T_{a}^{u}\bullet p_{0}\otimes T_{b}^{u} + p_{1}\bullet T_{a}^{u}\bullet p_{0} \otimes \chi(b,r)T_{b}^{u+r}  +  p_{0}\bullet T_{a}^{u}\bullet p_{1} \otimes T_{b}^{u+r} \\
	 \; &\hspace{0.5cm}+  p_{1}\bullet T_{a}^{u}\bullet p_{1} \otimes \chi(b,r)T_{b}^{u}]\\
	 \; &= \sum_{a+b=c} F(a,b)[ (\frac{1+\chi(a,r)}{4}(T_{a}^u+T_{a}^{u+r}))\otimes T_{b}^{u} +  (\frac{1-\chi(a,r)}{4}(T_{a}^u+T_{a}^{u+r})) \otimes \chi(b,r)T_{b}^{u+r}   \\
	 \; &\hspace{0.5cm}+   (\frac{1-\chi(a,r)}{4}(T_{a}^u-T_{a}^{u+r})) \otimes T_{b}^{u+r}+   (\frac{1+\chi(a,r)}{4}(T_{a}^u-T_{a}^{u+r})) \otimes \chi(b,r)T_{b}^{u}]\\
	 \; &= \sum_{a+b=c} F(a,b)[ (\frac{1+\chi(a,r)}{4}((1+\chi(b,r))T_{a}^u+ (1-\chi(b,r))T_{a}^{u+r}))\otimes T_{b}^{u}\\
	 \; &\hspace{0.5cm} + (\frac{1-\chi(a,r)}{4}((1+\chi(b,r))T_{a}^u- (1-\chi(b,r))T_{a}^{u+r}))\otimes T_{b}^{u+r}].
\end{align*}

Then, we have the following cases:
\begin{itemize}
	\item[{\sc Case 1:}] $ra=0$, i.e. $\chi(a,r)=1$.
	$$\delta_{D_{J_{r}}}(T_{c}^u)=\displaystyle \sum_{a+b=c}\frac{F(a,b)}{2}((1+\chi(b,r))T_{a}^u+ (1-\chi(b,r))T_{a}^{u+r})\otimes T_{b}^{u}.$$
	\begin{itemize}
		\item[(1.1)] $rb=0$, i.e. $\chi(b,r)=1$.
		$$\delta_{D_{J_{r}}}(T_{c}^u)=\sum_{a+b=c} F(a,b) T_{a}^u\otimes T_{b}^{u}.$$
		\item[(1.2)] $rb=1$, i.e. $\chi(b,r)=-1$.
		$$\delta_{D_{J_{r}}}(T_{c}^u)=\sum_{a+b=c} F(a,b) T_{a}^{u+r}\otimes T_{b}^{u}.$$
	\end{itemize}
		\item[{\sc Case 2:}] $ra=1$, i.e. $\chi(a,r)=-1$.
	$$\delta_{D_{J_{r}}}(T_{c}^u)=\displaystyle\sum_{a+b=c}\frac{F(a,b)}{2}((1+\chi(b,r))T_{a}^u- (1-\chi(b,r))T_{a}^{u+r})\otimes T_{b}^{u+r}.$$
	\begin{itemize}
		\item[(2.1)] $rb=0$, i.e. $\chi(b,r)=1$.
		$$\delta_{D_{J_{r}}}(T_{c}^u)=\sum_{a+b=c}F(a,b)T_{a}^u\otimes T_{b}^{u+r}.$$
		\item[(2.2)] $rb=1$, i.e. $\chi(b,r)=-1$.
		$$\delta_{D_{J_{r}}}(T_{c}^u)=-\sum_{a+b=c}F(a,b) T_{a}^{u+r}\otimes T_{b}^{u+r}.$$
	\end{itemize}
\end{itemize}

Therefore, written concisely:
$$\delta_{D_{J_{r}}}(T_{c}^u) = \sum_{a+b=c} F(a,b) \chi(a,r)^{rb}T_{a}^{u+(rb)r}\otimes T_{b}^{u+(ra)r}$$ 
and for  $T_{1}^u$ we have that 
$$\delta_{D_{J_{r}}}(T_{1}^u)=T_{1}^u\otimes T_{1}^u$$
because 
$$p_{0}\bullet T_{1}^{u+r}=p_{0}\bullet T_{1}^{u}=\displaystyle \frac{1}{2}(T_{1}^u+T_{1}^{u+r}),$$
$$p_{1}\bullet T_{1}^{u+r}=-p_{1}\bullet T_{1}^{u}=\displaystyle \frac{1}{2}(T_{1}^{u+r}-T_{1}^{u}),$$
$$ T_{1}^{u+r}\bullet p_{0}= T_{1}^{u}\bullet p_{0}=\displaystyle \frac{1}{2}(T_{1}^u+T_{1}^{u+r}),$$
and 
$$ T_{1}^{u+r}\bullet p_{1}= -T_{1}^{u}\bullet p_{1}=\displaystyle \frac{1}{2}(T_{1}^{u+r}-T_{1}^{u})$$
hold.

For the antipode $S_{D_{J_{r}}}$ we have that 
$$S_{D_{J_{r}}}(T_{c}^u) =\chi(c,u)\chi(c,r)F(c,c)T_{c}^u$$
and $S_{D_{J_{r}}}(T_{1}^u) =T_{1}^u$ also holds. 

Observe that, if $r=0$, then $J_{0}=1_{D}\otimes 1_{D}$ and $D=D_{J_{0}}$. If $r\neq 0$ we have new non-commutative and non-cocommutative Hopf coquasigroups $D_{J_{r}}$ obtained by  the deformation of $D={\mathbb K}[{\sf S}^7]\rtimes {\mathbb K}G$ by the twist $J_{r}$.

\section*{Funding}

The authors acknowledge financial support from  Ministerio de Ciencia e Innovaci\'on. Agencia Estatal de Investigaci\'on (Spain) grant no.  PID2024-15502NB-I00 (European FEDER support included, UE).  B. Ramos Pérez is funded by Xunta de Galicia through the Competitive Reference Groups (GRC) grant no. ED431C 2023/31,  and the fellowship grant no. ED481A-2023-023. Funding for open access charge: Universidade de Vigo.

%
%
%
%

\end{document}